\documentclass[11pt]{amsart}

\usepackage[english]{babel}
\usepackage{amsmath, amsthm, amsfonts,stmaryrd,amssymb}
\numberwithin{equation}{section}
\usepackage{bbm}
\usepackage{accents}
\usepackage{tikz-cd}
\usepackage{mathrsfs}                      
\usepackage{mathtools}                     
\usepackage{booktabs}
\usepackage{todonotes}
\usepackage[all]{xy}
\usepackage[utf8]{inputenc}
\usepackage{mathtools}                     
\usepackage{todonotes} 
\usepackage{framed}
\usepackage{graphicx}   
\usepackage[margin=3cm]{geometry}
\usepackage{hyperref} 
\usepackage{xcolor}
\hypersetup{
    colorlinks=true,
    linkcolor=blue,  
    urlcolor=cyan,
    pdftitle={},
    pdfauthor={},
}

\usepackage{subfig}
\usepackage{pgfplots}
\usepackage{cleveref}
\usepackage{thmtools}
\usepackage{stmaryrd}
\usepackage{enumerate}
\usepackage{dsfont}
\usepackage{cancel}
\usepackage[abs]{overpic}

\theoremstyle{plain}
\newtheorem{theorem}{Theorem}[section]

\newtheorem{remark}[theorem]{Remark}
\newtheorem{lemma}[theorem]{Lemma}

\newtheorem{proposition}[theorem]{Proposition}

\theoremstyle{definition}
\newtheorem{definition}[theorem]{Definition}
\newtheorem{example}[theorem]{Example}


\def\Om{\Omega}

\def\ed{\mathrm{d}}

\def\p{\partial}

\def\R{{\mathbb R}}

\def\exp{\operatorname{exp}}

\newcommand{\mres}{\mathbin{\vrule height 1.6ex depth 0pt width
0.13ex\vrule height 0.13ex depth 0pt width 1.3ex}}

\let\bs=\boldsymbol
\let\mb=\mathbb
\let\mc=\mathcal

\let\mr=\mathrm

\def\x{\mathbf{x}}

\def\brho{\boldsymbol{\rho}}
\def\bF{\boldsymbol{F}}
\def\bphi{\boldsymbol{\phi}}

\def\Tc{\Cs}
\def\Ec{\mathcal{E}}

\def\Lc{\mathcal{L}}

\def\Wc{{W}}

\def\Pc{\mathcal{P}}
\def\Fc{\mathcal{F}}
\def\Gc{\mathcal{G}}
\def\Ac{\mathcal{A}}
\def\Uc{\mathcal{U}}
\def\Rd{\mathbb{R}^d}

\usepackage[ruled,vlined]{algorithm2e}
 
\DeclareMathOperator*{\argmin}{argmin}

\def\d{\text{d}}

\def\t{\text{t}}

\def\rhoab{\rho^\alpha}

\def\brhoab{\brho^\alpha}
\def\bphiab{\bphi^\alpha}

\def\grad{\nabla}
\def\n{\bs{n}}

\def\PcO{\Pc(\Om)}

\def\intO{\int_{\Om}}

\def\Cs{\mathcal{T}}
\def\Es{\overline{\Sigma}}
\def\EsIn{\Sigma}

\def\edge{\sigma}
\def\EsK{\Es_K}
\def\EsInK{\EsIn_K}

\def\PCs{\mathbb{P}_{\Cs}}

\def\Fs{\mathbb{F}_\Cs}

\def\one{\boldsymbol{1}}

\def\indf{\mathds{1}}

\def\C{\textit{C}\,} 
\def\rhoe{\varrho} 

\def\bh{{\boldsymbol h}}
\def\bmu{\boldsymbol{\mu}}
\def\bnu{\boldsymbol{\nu}}

\def\ed{\mathrm{d}}
\def\extra{\mathsf{E}_\alpha}

\usepackage{array,multirow,makecell}
\setcellgapes{1pt}
\makegapedcells
\newcolumntype{R}[1]{>{\raggedleft\arraybackslash }b{#1}}
\newcolumntype{L}[1]{>{\raggedright\arraybackslash }b{#1}}
\newcolumntype{C}[1]{>{\centering\arraybackslash }b{#1}}
\usepackage{pifont}
\newcommand{\cmark}{\ding{51}}%

\def\scheme{EVBDF2} 

\title[From geodesic extrapolation to a BDF2 scheme for Wasserstein gradient flows]{From geodesic extrapolation to a variational BDF2 scheme for Wasserstein gradient flows}

\begin{document}
	
	\date{\today}
	
	\author[T.\ O. Gallou\"et]{Thomas O.\ Gallou\"et}
	\address{Thomas O. Gallou\"et  (\href{mailto:thomas.gallouet@inria.fr}{\tt thomas.gallouet@inria.fr}), Team Mokaplan, Inria Paris  75012 Paris, CEREMADE, CNRS, UMR 7534, Université Paris-Dauphine, PSL University, 75016 Paris, France} 
	\author[A. Natale]{Andrea Natale}
	\address{Andrea Natale (\href{mailto:andrea.natale@inria.fr}{\tt andrea.natale@inria.fr}), Inria, Univ. Lille, CNRS, UMR 8524 - Laboratoire Paul Painlevé, F-59000 Lille, France
	} 
	\author[G. Todeschi]{Gabriele Todeschi}
	\address{Gabriele Todeschi (\href{mailto:gabriele.todeschi@univ-grenoble-alpes.fr}{\tt gabriele.todeschi@univ-grenoble-alpes.fr}), Univ. Grenoble-Alpes, ISTerre, F-38058 Grenoble, France
	} 
	
	\begin{abstract} 
		We introduce a time discretization for Wasserstein gradient flows based on the classical Backward Differentiation Formula of order two. The main building block of the scheme is the notion of geodesic extrapolation in the Wasserstein space, which in general is not uniquely defined. We propose several possible definitions for such an operation, and we prove convergence of the resulting scheme to the limit PDE, in the case of the Fokker-Planck equation.
		For a specific choice of extrapolation we also prove a more general result, that is convergence towards EVI flows.
		Finally, we propose a variational finite volume discretization of the scheme which numerically achieves second order accuracy in both space and time.
	\end{abstract}
	
	\maketitle
	
	\noindent
	{\bf Keywords:} Optimal transport, Wasserstein extrapolation, Wasserstein gradient flows, BDF2
	
	\vspace{1em}
	\noindent
	{\bf MSC}(2020){\bf:} 49Q22, 35A15, 65M08

	\vspace{1em}
	
	\section{Introduction}\label{sec:LJKO2intro}
	
	In this paper we are concerned with the construction of second-order in time discretizations for the following system of PDEs,
	describing the time evolution of a density $\rhoe: [0,T]\times \Omega \rightarrow \mathbb{R}_+ $ on a convex compact domain $\Omega$ and over the time interval $[0,T]$:
	\begin{equation}\label{eq:model}
		\partial_t \rhoe - \mathrm{div} \left(\rhoe \nabla \frac{\delta \mathcal E}{\delta \rho} (\rhoe) \right) = 0  \quad \text{on }  (0,T)\times \Omega \,,
	\end{equation}
	with initial and boundary conditions:
	\begin{equation}\label{eq:bcs}
		\rhoe(0,\cdot) = \rho_0\,, \quad \rhoe \nabla \frac{\delta \mathcal E}{\delta \rho} (\rhoe) \cdot n_{\partial \Omega} = 0 \quad \text{on }  (0,T) \times \partial \Omega\,,
	\end{equation}
	for a given initial density $\rho_0$, and where $n_{\partial \Omega}$ denotes the outward pointing normal to $\partial \Omega$. In equation \eqref{eq:model}, $\mathcal{E}:L^1(\Omega; \mathbb{R}_+) \rightarrow \mathbb{R}$ is a functional of the density and describes the energy of the system. Different choices for $\mathcal{E}$ yield different equations modeling a wide range of phenomena. Typical examples are the Fokker-Planck equation \cite{jordan1998fokkerplanck}, the porous medium equation \cite{otto2001geometry} or the Keller-Segel equation \cite{blanchet2013KellerSegel}, but also more complex cases such as multiphase flows \cite{cances2017multiphase,laurenccot2013gradient,cances2019two} or crowd motion models \cite{santambrogio2018crowd} can be considered.
	
	Since the density satisfies the continuity equation with zero boundary flux, its total mass is conserved. Moreover, the energy decreases along the evolution:
	\[
	\frac{\ed}{\ed t} \mathcal{E}(\rhoe(t,\cdot) ) \leq 0\,.
	\]
	This behaviour is a consequence of the fact that system \eqref{eq:model}, under suitable assumptions on the energy, can be interpreted as a gradient flow in the space of probability measures $\mc{P}(\Omega)$ equipped with the Wasserstein distance $W_2$. This interpretation is well-known since the pioneering work of Jordan, Kinderlehrer and Otto \cite{jordan1998fokkerplanck}, who showed that one recovers the Fokker-Planck equation when following the steepest descent curve of an entropy functional with respect to the Wasserstein metric. Such result is best explained in the time-discrete setting: given a uniform decomposition $0 = t_0 <t_1 < \ldots < t_N= T$ of the interval $[0,T]$ with time step $\tau \coloneqq t_{n+1} -t_n$, consider the sequence $(\rho_n)_n$ defined for $1\leq n\leq N$ by
	\begin{equation}\label{eq:jko}
		\rho_{n} = \argmin_{\rho \in \mathcal{P}(\Omega)} \frac{W^2_2(\rho,\rho_{n-1})}{2\tau} + \mc{E}(\rho)\,,
	\end{equation}
	where the energy is given by
	\begin{equation}\label{eq:energyfp}
		\mathcal{E}(\rho) = \int_\Omega V\rho + \rho \log \rho \,,
	\end{equation}
	with $V:\Omega\rightarrow \mathbb{R}$ being a Lipschitz function, if $\rho$ is absolutely continuous with respect to the Lebesgue measure and $+\infty$ otherwise. Then, one can show that the discrete curve $t\mapsto \tilde{\rhoe}(t)$, defined by $\tilde{\rhoe}(t,\cdot) = \rho_{n-1}$ for $t\in(t_{n-1}, t_{n}]$ and $1 \leq n \leq N$, converges uniformly in the $W_2$ distance to the unique solution of the Fokker-Planck equation
	\begin{equation}\label{eq:fp}
		\partial_t \rhoe - \mathrm{div} (\rhoe \nabla V) - \Delta \rhoe  = 0  \quad \text{on }  (0,T)\times \Omega \,,
	\end{equation}
	satisfying \eqref{eq:bcs}.
	
	The numerical scheme defined in equation \eqref{eq:jko} is known as JKO scheme and it allows one to interpret many different models as Wasserstein gradient flows. It also provides a convenient framework both for the analysis of such models (e.g., to prove {existence of solutions or} exponential convergence towards steady states) \cite{ambrosio2008gradient,santambrogio2017euclidean}, and for the design of numerical discretizations \cite{benamou2016augmented,carrillo2019primal,cances2020LJKO,leclerc2020lagrangian,carlier2017convergence}. In fact, reproducing the JKO scheme at the discrete level generally  implies energy stability even in very degenerate settings. Moreover in the case of convex energies one can use robust convex optimization tools that, e.g., can easily take into account the positivity constraint on the density or even other type of strong constraints (as in the case of incompressible immiscible multiphase flows in porous media, see Section \ref{ssec:multiphase}). 
	
	Since the JKO scheme is a variational version of the implicit Euler scheme, it is an order one method. Recently, several higher-order alternatives to the JKO scheme have been proposed, but it is not trivial to translate them into a fully-discrete setting (see \cite{Matthes2019bdf2,Legendre2017VIM}, and Section \ref{ssec:vimbdf2} below for a detailed description of such approaches). In fact, to the best of our knowledge, there exists no viable fully-discrete approach able to compute with second order accuracy general Wasserstein gradient flows while preserving (to some extent) the underlying variational structure.	
	
	In this paper we contribute to this quest by reformulating the classical multi-step scheme based on the Backward Differentiation Formula of order two (BDF2) as the composition of two inner steps: a geodesic extrapolation step, and a standard JKO step. We refer to the resulting scheme as Extrapolated Variational BDF2 (\scheme) scheme. As the extrapolation step is not uniquely defined (since Wasserstein geodesics may not be globally defined in time), we provide several natural notions of extrapolation and for some of these we provide convergence guarantees for the resulting scheme.
	For a particular choice of extrapolation, which unfortunately is not covered by our theory, we also propose a simple and efficient (space-time) discretization.
	Importantly, we find numerically that this does indeed produce second-order accurate solutions both in space and time.

	\subsection{Description of the BDF2 approach and main results} In the Euclidean setting, the gradient flow associated to a smooth real-valued convex function $F:\Rd\rightarrow\R$ and a starting point $x_0\in\Rd$, is the unique solution to the Cauchy problem
	\begin{equation}\label{eq:gradF}
	\left \{
	\begin{array}{ll}
		x'(t) = - \nabla F(x(t))\,, & \quad \forall\, t>0\,,\\
		x(0) = x_0\,. \\
	\end{array}
	\right.
	\end{equation}
	The BDF2 scheme applied to such a system, with time step $\tau>0$, can be written as follows: given $x_0,x_1 \in \mathbb{R}^d$, for $n\geq 2$ find $x_n\in \mathbb{R}^d$ satisfying
	\begin{equation}\label{eq:BDF2_eu_OC}
	\frac{3}{2\tau} \Big(  x_n- \frac{4}{3} x_{n-1}+\frac{1}{3} x_{n-2} \Big) = -\nabla F(x_n)\,.
	\end{equation}
	This can be interpreted as an implicit Euler step, with starting point 
	\[
	x^\alpha_{n-1} \coloneqq  x_{n-2} + \alpha(x_{n-1}- x_{n-2}) =  x_{n-1} + \beta(x_{n-1}- x_{n-2}) \,,
	\]  where $\alpha =4/3$ and $\beta = \alpha -1=1/3$, and with time step $(1-\beta) \tau = 2\tau/3$. In turn, $x^\alpha_{n-1}$ coincides with the Euclidean extrapolation at time $\alpha$, from $x_{n-2}$ (at time $0$) to $x_{n-1}$ (at time $1$), with respect to a fictitious time variable (see Figure \ref{fig:time} for a graphical representation of the time intervals involved in the scheme).
	
	\begin{figure}
	\begin{overpic}[scale=.8]{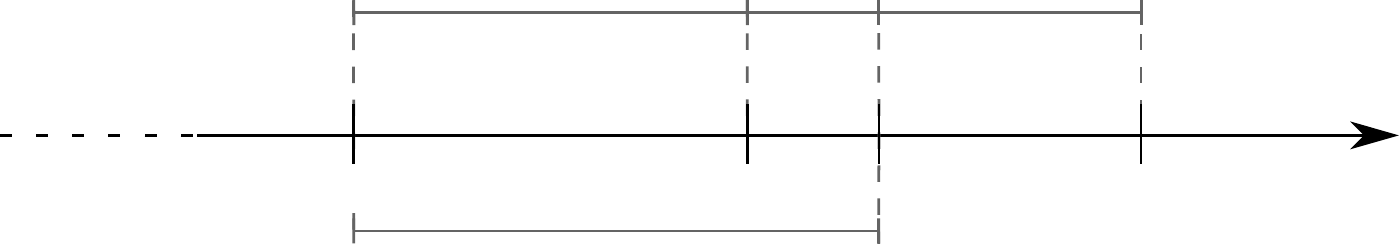}
		\put(214,42){$(1-\beta)\tau$}
		\put(182,42){$\beta\tau$}
		\put(125,42){$\tau$}
		\put(70,11){$t_{n-2}$}
		\put(163,11){$t_{n-1}$}
		\put(259,11){$t_n$}
		\put(140,-6){$\alpha \tau$}
	\end{overpic}
	\caption{A graphical representation of the time intervals involved in the definition of the EVBDF2 scheme.} \label{fig:time}
	\end{figure}
	
	In order to define a counterpart to the BDF2 scheme \eqref{eq:BDF2_eu_OC} for Wasserstein gradient flows, one needs to replace the Euclidean extrapolation at time $\alpha>1$ by an analogous operation in the space of probability measures equipped with the $W_2$ metric. In this paper, we will represent such an operation by a map $\extra:\mc{P}_2(\mathbb{R}^d)\times  \mc{P}_2(\mathbb{R}^d)  \rightarrow \mc{P}_2(\mathbb{R}^d)$ (where $\mc{P}_2(\mathbb{R}^d)$ is the set of probability measures on $\mathbb{R}^d$ with finite second moments), which we will refer to as an $\alpha$-extrapolation operator. Given such a map, we define the \scheme\ scheme as follows: given $\rho_0,\rho_1 \in \mathcal{P}(\Omega)$, for $n\geq 2$ find $\rho_n \in \mc{P}(\Omega)$ satisfying
	\begin{equation}\label{eq:bdf2metric}
	\displaystyle \rho_{n} \in \argmin_{\rho \in \mathcal{P}(\Omega)} \frac{W^2_2(\rho,\rho^\alpha_{n-1})}{2(1-\beta)\tau} +\mathcal{E}(\rho)\,,
	\quad \rho_{n-1}^\alpha = \extra (\rho_{n-2},\rho_{n-1})\,,
	\end{equation}
	where here $\mc{E}:\mc{P}(\Omega) \rightarrow \mathbb{R}$ is defined on the whole space $\mc{P}(\Omega)$.
	
	The extrapolation operator $\extra$ plays a crucial role in the scheme, but it is not trivial to propose an appropriate definition for it due to the structure of $W_2$ geodesics on $\mc{P}_2(\mathbb{R}^d)$.
	To clarify this, recall that 
	a (globally length-minimizing) geodesic with respect to the $W_2$ metric is a curve $\omega: [t_0,t_1] \rightarrow \mc{P}_2(\mathbb{R}^d)$ such that 
	\begin{equation}\label{eq:geodesic}
	W_2(\omega(s_0),\omega(s_1)) = \frac{|s_1-s_0|}{|t_1-t_0|} W_2(\omega(t_0),\omega(t_1))\,,
	\end{equation} 
	for all $s_0,s_1 \in (t_0,t_1)$. Given two measures $\mu_0,\mu_1 \in\mc{P}_2(\mathbb{R}^d)$ there always exists a geodesic connecting the two.
	Furthermore, due to Brenier's theorem, supposing that $\mu_0$ is absolutely continuous with respect to the Lebesgue measure, there exists a unique geodesic $\omega:[0,1]\rightarrow \mathcal{P}_2(\mathbb{R}^d)$ such that $\omega(0) = \mu_0$ and $\omega(1)=\mu_1$, and this has a very  
	simple expression: 
	\begin{equation}\label{eq:brenierth}
	\omega(t) = ((1-t) \mathrm{Id} + t \nabla u)_\# \mu_0\,,
	\end{equation}
	where $\mathrm{Id}$ is the identity map on $\Rd$ and $u:\Rd \rightarrow \mathbb{R}$ is a convex function. This means that particles travel on straight lines along the interpolation, without colliding into each other. However, for a given $\alpha>1$, there may exist no geodesic defined on $[0,\alpha]$ that coincide on $[0,1]$ with $\omega$. This is because following their straight trajectories particles may collide immediately after time $t=1$, even if both $\mu_0$ and $\mu_1$ have smooth and strictly positive densities. This means that one cannot use such geodesic extensions to define the extrapolation operator $\extra$ in a unique way. Therefore, instead of focusing on a particular definition, we only require a uniform stability bound on the extrapolation which we will need to prove the convergence of the scheme. In particular, we will focus on extrapolation operators that are dissipative in the following sense:
	
	\begin{definition}[Dissipative extrapolations] \label{def:dissipation} An extrapolation operator $\extra$ is $\theta$-dissipative if it satisfies
	\begin{equation}\label{eq:dissipation}
		W_2(\mu_1, \extra(\mu_0,\mu_1)) \leq \theta W_2(\mu_0,\mu_1)\,, 
	\end{equation}
	for any $\mu_0,\mu_1 \in \mc{P}_2(\mathbb{R}^d)$ and for a constant $\theta\geq 0$.
	\end{definition}
	Note that by equation \eqref{eq:geodesic}, if the extrapolation is consistent with the geodesic extension when this exists, then we must have $\theta\geq \alpha -1 \eqqcolon \beta$.
	Upon adding a further consistency assumption on the extrapolation given in equation \eqref{eq:consistency} below (see Remark \ref{rem:roleassumptions} for more comments on the role of our main assumptions), we can establish the following convergence result:
	
	\begin{theorem}\label{th:convergencefp} Let $\rho_0\in\mc{P}(\Omega)$ and $\mc{E}$ given by \eqref{eq:energyfp}. For any given $N\geq 1$, let $(\rho_n)_{n=0}^N$ be the discrete solution defined by the scheme \eqref{eq:bdf2metric} for given $\rho_1 \in \mc{P}(\Omega)$ (dependent on $N$), with time step $\tau = T/N$, and with $\extra$ being a $\theta$-dissipative extrapolation operator with $0\leq \beta=\alpha-1<1$ and $\theta<1/2$, and such that for all $\mu_0,\mu_1 \in \mc{P}(\Omega)$ and $\varphi \in \C^\infty_c(\Rd)$ verifying $\nabla \varphi \cdot n_{\partial \Omega} =0$ on $\partial\Omega$,
	\begin{equation}\label{eq:consistency}
		\left| \int_{\Rd} \varphi\, ( \extra(\mu_0,\mu_1) - \alpha \mu_1 + \beta \mu_0) \right|\leq C_\varphi W^2_2(\mu_0,\mu_1)\,,
	\end{equation}
	where $C_\varphi>0$ only depends on $\alpha$, $\varphi$ and $\Omega$. Suppose that $W^2_2(\rho_0,\rho_1) \leq C \tau$,  for  a constant $C>0$ independent of $\tau$, and that $\mc{E}(\rho_1)\leq \mc{E}(\rho_0)$. 
	Then, the curve $t\mapsto \tilde{\rho}_\tau(t)$ defined by $\tilde{\rho}_\tau(t) \coloneqq \rho_{n-1}$ for all $t\in(t_{n-1}, t_{n}]$ and $1 \leq n \leq N$, converges as $N\rightarrow \infty$, uniformly in the $W_2$ distance, to a distributional solution to the Fokker-Planck equation on $[0,T]\times \Omega$ and initial conditions given by $\rho_0$.
	\end{theorem}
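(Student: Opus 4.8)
The plan is to adapt the classical minimizing--movements (JKO) convergence argument to the two--step scheme \eqref{eq:bdf2metric}, the genuinely new points being the treatment of the extrapolation in the a priori estimates (where the threshold $\theta<1/2$ is forced) and the use of the consistency assumption \eqref{eq:consistency} to recognise the scheme as a weak BDF2 discretisation. Throughout I write $\tau'\eqdef(1-\beta)\tau$, $d_n\eqdef W^2_2(\rho_n,\rho_{n-1})$, $e_n\eqdef W^2_2(\rho_n,\rho^\alpha_{n-1})$, and use that $(\mc{P}(\Omega),W_2)$ is a compact metric space and that every $\rho_n$ is absolutely continuous with finite entropy (otherwise the objective in \eqref{eq:bdf2metric} is $+\infty$, using $\mc{E}(\rho_0)<\infty$, which the hypotheses effectively require), so that it is joined to $\rho^\alpha_{n-1}$ by a unique optimal map $T_n$ with $(T_n)_\#\rho_n=\rho^\alpha_{n-1}$ and $e_n=\int_\Omega|x-T_n(x)|^2\ud\rho_n$.

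\emph{Step 1: a priori estimates.} First I would compare the minimality of $\rho_n$ against the competitor $\rho=\rho_{n-1}$ and use \eqref{eq:dissipation} in the form $W^2_2(\rho_{n-1},\rho^\alpha_{n-1})\le\theta^2 d_{n-1}$ to obtain $\mc{E}(\rho_n)+e_n/(2\tau')\le\mc{E}(\rho_{n-1})+\theta^2 d_{n-1}/(2\tau')$; separately, the triangle inequality together with $W_2(\rho_{n-1},\rho^\alpha_{n-1})\le\theta W_2(\rho_{n-2},\rho_{n-1})$ and Young's inequality give $d_n\le(1+\varepsilon)e_n+(1+\varepsilon^{-1})\theta^2 d_{n-1}$ for every $\varepsilon>0$. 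Summing both inequalities over $n$ and eliminating the sums of the $e_n$'s and $d_n$'s against each other, the estimate closes precisely when $\theta^2(2+\varepsilon+\varepsilon^{-1})<1$ for some $\varepsilon$; minimising the left-hand side at $\varepsilon=1$ this is exactly $\theta<1/2$. This yields $\sum_n W^2_2(\rho_n,\rho_{n-1})\le C\tau$, $\sum_n e_n\le C\tau$, and $\sup_{0\le n\le N}\mc{E}(\rho_n)\le C$, where I also use $W^2_2(\rho_0,\rho_1)\le C\tau$, $\mc{E}(\rho_1)\le\mc{E}(\rho_0)$, and that $\mc{E}$ is bounded below on $\mc{P}(\Omega)$ (since $\Omega$ is bounded and $V$ is bounded). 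From the first bound and Cauchy--Schwarz I get the uniform estimate $W_2(\tilde\rho_\tau(s),\tilde\rho_\tau(t))\le C\,(|t-s|+\tau)^{1/2}$; from the entropy bound, de la Vall\'ee--Poussin gives uniform integrability of the family $\{\rho_n\}_{n,N}$.

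\emph{Step 2: compactness and discrete Euler--Lagrange.} A refined Arzel\`a--Ascoli argument for curves valued in the compact space $(\mc{P}(\Omega),W_2)$ with an asymptotic modulus of continuity extracts a subsequence with $\tilde\rho_\tau\to\rho$ uniformly in $W_2$; uniform integrability upgrades this to weak $L^1((0,T)\times\Omega)$ convergence, lower semicontinuity of the entropy gives $\rho\in L^\infty((0,T);L\log L)$, and $W^2_2(\rho_0,\rho_1)\le C\tau\to0$ gives $\rho(0)=\rho_0$; since $\sup_n W^2_2(\rho_n,\rho_{n-1})\le\sum_n d_n\le C\tau\to0$, the once-shifted piecewise-constant interpolant $\hat\rho_\tau$ (equal to $\rho_n$ on $(t_{n-1},t_n]$) has the same limit. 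Next, fixing $\varphi\in C^\infty([0,T]\times\overline\Omega)$ with $\varphi(T,\cdot)=0$ and $\nabla\varphi\cdot n_{\partial\Omega}=0$ and writing $\varphi_n\eqdef\varphi(t_n,\cdot)$, I would perturb $\rho_n$ by the flow $(\Phi_s)$ of $\nabla\varphi_n$ (which preserves $\Omega$) and use $(\Phi_s,T_n)_\#\rho_n$ as a competitor coupling; minimality then forces
\[
\frac{1}{\tau'}\int_{\Omega}\langle\nabla\varphi_n,\mathrm{Id}-T_n\rangle\ud\rho_n+\int_{\Omega}\rho_n\,\nabla V\!\cdot\!\nabla\varphi_n-\int_{\Omega}\rho_n\,\Delta\varphi_n=0\,,
\]
using $\tfrac{\ed}{\ed s}\big|_{s=0}\!\int_\Omega V\!\circ\!\Phi_s\,\rho_n=\int_\Omega\rho_n\nabla V\!\cdot\!\nabla\varphi_n$ (legitimate for Lipschitz $V$ by dominated convergence) and $\tfrac{\ed}{\ed s}\big|_{s=0}\!\int_\Omega (\Phi_s)_\#\rho_n\log((\Phi_s)_\#\rho_n)=-\int_\Omega\rho_n\Delta\varphi_n$ (change of variables). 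A second-order Taylor expansion of $\varphi_n$ along $T_n$ rewrites the first term as $\tfrac1{\tau'}\big(\int_\Omega\varphi_n\ud\rho_n-\int_\Omega\varphi_n\ud\rho^\alpha_{n-1}\big)$ up to an error bounded by $C\|\varphi_n\|_{C^2}\,e_n/\tau'$, and \eqref{eq:consistency} replaces $\int_\Omega\varphi_n\ud\rho^\alpha_{n-1}$ by $\alpha\int_\Omega\varphi_n\ud\rho_{n-1}-\beta\int_\Omega\varphi_n\ud\rho_{n-2}$ up to an error bounded by $C_{\varphi_n}d_{n-1}$; since $\alpha=1+\beta$ and $1/\tau'=1/((1-\beta)\tau)$, the leading term is exactly $\tfrac{1}{(1-\beta)\tau}\int_\Omega\varphi_n\,\ud\big(\rho_n-(1+\beta)\rho_{n-1}+\beta\rho_{n-2}\big)$, i.e.\ the BDF2 difference quotient tested against $\varphi_n$.

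\emph{Step 3: passage to the limit.} Multiplying the $n$-th identity by $\tau$ and summing over $2\le n\le N$, the two error contributions are bounded by $C(\tau/\tau')\big(\sup_n\|\varphi_n\|_{C^2}\sum_n e_n+\sup_n C_{\varphi_n}\sum_n d_n\big)\le C\tau\to0$ by Step~1 (using that $\tau/\tau'=1/(1-\beta)$ is constant and $\sup_n C_{\varphi_n}<\infty$). An Abel summation on the BDF2 term, regrouping by the index of $\rho_k$ and using $\varphi(T,\cdot)=0$, the consistency of the BDF2 stencil (so $\varphi_k-(1+\beta)\varphi_{k+1}+\beta\varphi_{k+2}=-(1-\beta)\tau\,\partial_t\varphi(t_k,\cdot)+O(\tau^2)$), and $\rho_1\to\rho_0$, produces in the limit $-\int_0^T\!\!\int_\Omega\rho\,\partial_t\varphi-\int_\Omega\rho_0\,\varphi(0,\cdot)$; the remaining terms are Riemann sums that converge, by weak $L^1$ convergence of $\hat\rho_\tau$ against the $L^\infty$ coefficients $\nabla V\!\cdot\!\nabla\varphi$ and $\Delta\varphi$, to $\int_0^T\!\!\int_\Omega(\rho\,\nabla V\!\cdot\!\nabla\varphi-\rho\,\Delta\varphi)$. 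The resulting identity is precisely the distributional formulation of \eqref{eq:fp}--\eqref{eq:bcs} with datum $\rho_0$ (the no-flux condition being encoded in admitting test functions with $\nabla\varphi\cdot n_{\partial\Omega}=0$), and by uniqueness of distributional solutions of the linear Fokker--Planck equation the whole family converges, not merely a subsequence. I expect the main obstacle to be Step~1: unlike in the plain JKO scheme, the comparison argument directly controls only $W_2(\rho_n,\rho^\alpha_{n-1})$, and absorbing the ``energy injected'' by the extrapolation---through the triangle inequality and \eqref{eq:dissipation}---while still bounding $\sum_n W^2_2(\rho_n,\rho_{n-1})$ is exactly what forces the smallness condition $\theta<1/2$; once that coupled estimate is in place, the rest is a careful but routine adaptation of the minimizing-movements machinery, with \eqref{eq:consistency} serving precisely to turn a JKO step issued from the extrapolated measure into the weak BDF2 scheme.
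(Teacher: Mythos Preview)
Your proposal is correct and follows essentially the same route as the paper: the a priori estimate via $\theta$-dissipativity yielding the threshold $\theta<1/2$ (the paper's Lemmas~\ref{lem:almostED}--\ref{lem:bound_W}), Ascoli--Arzel\`a compactness (Proposition~\ref{prop:conv_rho}), the flow-perturbation Euler--Lagrange equation (Lemma~\ref{lem:variations_ac}), the Taylor expansion controlling the paper's term $I_4^n$, the consistency hypothesis controlling $I_5^n$, and the Abel summation on the BDF2 stencil handling $I_3$. The only cosmetic differences are that you track $e_n$ and $d_n$ separately before combining (the paper merges them into a single inequality via a specific choice of Young's parameter), you test with $\varphi_n$ rather than $\varphi_{n-2}$, and you invoke uniqueness of distributional solutions to pass from subsequential to full convergence, which the paper leaves implicit.
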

	
	Of course, in order to achieve second order accuracy, we must set $\alpha =4/3$ and require in addition that, if there exists a geodesic $\omega:[0,\alpha]\rightarrow\mc{P}(\Omega)$ such that $\omega|_{[0,1]}$ is a geodesic from $\mu_0$ to $\mu_1$, then $\extra(\mu_0,\mu_1)$ must coincide with $\omega(\alpha)$. Importantly, we will show that there exist several different ways to define such an operator, providing therefore different convergent approaches.
	We highlight that there is no inconsistency between the scheme \eqref{eq:bdf2metric}, defined on $\mc{P}(\Omega)$, and an extrapolation operator $\extra$ valued in $ \mc{P}_2(\mathbb{R}^d)$. In fact, both for theoretical or numerical reasons, one may be led to define an extrapolation operator on the whole space to avoid issues with the boundary of $\Omega$. Nevertheless, scheme \eqref{eq:bdf2metric} is well-defined and, as long as the consistency assumption \eqref{eq:consistency} is satisfied, the convergence result of Theorem \ref{th:convergencefp} holds.
	
	One approach for producing an operator $\extra$, which enjoys a particularly rich structure, consists in reproducing the variational characterization of the linear extrapolation in the metric setting.
	Given two points $x_0, x_1 \in \mathbb{R}^d$, the Euclidean extrapolation at time $\alpha$ from $x_0$ to $x_1$ is the point $x_\alpha = \alpha x_1 - \beta x_0$ with $\beta =\alpha-1$. This can be obtained as the unique solution to
	\begin{equation}\label{eq:euclideanvar}
	x_\alpha = \underset{x\in \mathbb{R}^d}{\mathrm{argmin}} ~ \alpha |x-x_1|^2 - \beta |x-x_0|^2\,.
	\end{equation}
	Similarly, we define the metric extrapolation in the Wasserstein space as follows:
	\begin{equation}\label{eq:metricextra}
	\extra(\mu_0,\mu_1) \coloneqq \underset{\rho \in \mathcal{P}_2(\mathbb{R}^d)}{\mathrm{argmin}} ~ \alpha W_2^2(\rho,\mu_1) - \beta W_2^2(\rho,\mu_0)\,.
	\end{equation} 
	Problem \eqref{eq:metricextra} is not a convex optimization problem in the classical sense. To see this, consider the following simple counterexample. In dimension $d=1$, take \[
	\mu_0 = (\delta_{-1} + \delta_{1})/2, \quad  \mu_1 = \delta_0, \quad \nu_0=\delta_{-1}, \quad \nu_1=\delta_{1}. \]Along the interpolation $\nu(t) = (1-t)\nu_0+t\nu_1$, the first term of the functional in \eqref{eq:metricextra} is constant whereas the second one is concave.
	Nonetheless, we will show that problem \eqref{eq:metricextra} always admits a unique solution (see Proposition \ref{prop:existence}) and it also satisfies the assumptions in Theorem \ref{th:convergencefp}. Furthermore, exploiting the variational formulation of the metric extrapolation \eqref{eq:metricextra}, we can prove a more general convergence result using the Evolution Variational Inequality (EVI) characterization of gradient flows in metric spaces. More precisely, we prove the following result:
	\begin{theorem}\label{th:convergenceevi}
	Let $\rho_0\in\mc{P}(\Omega)$ and $\mc{E}:\mc{P}(\Omega) \rightarrow \mathbb{R}$ being a $\lambda$-convex energy in the generalized geodesic sense, for $\lambda\in\R_+$. For any given $N\geq 1$, let $(\rho_n)_{n=0}^N$ be the discrete solution defined by the scheme \eqref{eq:bdf2metric} for given $\rho_1 \in \mc{P}(\Omega)$ (dependent on $N$), with time step $\tau = T/N$, and with $\extra$ being the metric extrapolation \eqref{eq:metricextra} with $\beta = \alpha -1$. Suppose that $W^2_2(\rho_0,\rho_1) \leq C \tau$,  for  a constant $C>0$ independent of $\tau$, and that $\mc{E}(\rho_1)\leq \mc{E}(\rho_0)$.  Then, the curve $t\mapsto \tilde{\rho}_\tau(t)$ defined by $\tilde{\rho}_\tau(t) \coloneqq  \rho_{n-1}$ for $t\in(t_{n-1}, t_{n}]$ and $1 \leq n \leq N$, converges as $N \rightarrow \infty$, uniformly in the $W_2$ distance, to the unique absolutely continuous curve $\rhoe:[0,T]\rightarrow\mc{P}(\Omega)$ satisfying $\rhoe(0) = \rho_0$ and such that 
	for any $\nu\in\PcO$ it holds
	\begin{equation*}
		\frac{\d}{\d t} \frac{1}{2} \Wc_2^2(\varrho(t),\nu) \le \Ec(\nu)-\Ec(\varrho(t)) - \frac{\lambda}{2} \Wc_2^2(\varrho(t),\nu), \quad \forall t\in(0,T) \,.
	\end{equation*}
	\end{theorem}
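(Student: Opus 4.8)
The plan is to transport to the Wasserstein setting the classical $G$-stability (Dahlquist) analysis that renders the Euclidean BDF2 method unconditionally contractive, reading the Nevanlinna--Odeh quadratic form through the variational characterisation \eqref{eq:metricextra} of the metric extrapolation, and then to conclude via the uniqueness of EVI flows. Throughout write $h\coloneqq(1-\beta)\tau$ for the effective step of the inner JKO problem in \eqref{eq:bdf2metric} and $d_n\coloneqq W_2^2(\rho_n,\rho_{n-1})$. \emph{(Step 1: a priori bounds and compactness.)} First I would record that the metric extrapolation \eqref{eq:metricextra} is $\beta$-dissipative — testing its optimality against the competitor $\mu_1$ and using the triangle inequality gives $W_2(\mu_1,\extra(\mu_0,\mu_1))\le\beta\,W_2(\mu_0,\mu_1)$ — and, more precisely, that the three distances from $\extra(\mu_0,\mu_1)$ to $\mu_1$, to $\mu_0$, and from $\mu_0$ to $\mu_1$ stand in the ratio $\beta:\alpha:1$, with the associated first-order optimality condition $\mathrm{id}=\alpha\,\mathrm{b}_1-\beta\,\mathrm{b}_0$, where $\mathrm{b}_i$ is the barycentric projection of the optimal plan from $\extra(\mu_0,\mu_1)$ to $\mu_i$. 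Combining this dissipativity with the minimality of $\rho_n$ in \eqref{eq:bdf2metric} tested against $\rho_{n-1}$, optimising Young's inequality, and using $W_2^2(\rho_0,\rho_1)\le C\tau$, $\Ec(\rho_1)\le\Ec(\rho_0)$ and the lower boundedness of $\Ec$ on the compact space $(\PcO,W_2)$, one obtains a discrete Gr\"onwall estimate which, thanks to $4\beta^2<1$ (true for $\alpha=\tfrac43$), closes into $\sum_n d_n/\tau\le C$. A refined Arzel\`a--Ascoli argument, exactly as in the classical JKO analysis, then yields uniform $W_2$-convergence, along a subsequence, of $\tilde\rho_\tau$ to an absolutely continuous curve $\varrho$ with $\varrho(0)=\rho_0$.

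\emph{(Step 2: a discrete EVI --- the heart of the proof.)} The inner step of \eqref{eq:bdf2metric} is a JKO step from $\rho^\alpha_{n-1}$ with step $h$, so $\lambda$-convexity of $\Ec$ along generalised geodesics gives the usual minimising-movement estimate: for every $\nu\in\PcO$,
\[
\frac{W_2^2(\rho_n,\nu)-W_2^2(\rho^\alpha_{n-1},\nu)+W_2^2(\rho_n,\rho^\alpha_{n-1})}{2h}+\frac{\lambda}{2}W_2^2(\rho_n,\nu)\ \le\ \Ec(\nu)-\Ec(\rho_n)\,.
\]
The new ingredient is a comparison estimate for the metric extrapolation, the Wasserstein counterpart of the pointwise identity $|\alpha a-\beta b-\nu|^2=\alpha|a-\nu|^2-\beta|b-\nu|^2+\alpha\beta|a-b|^2$: for all $\mu_0,\mu_1,\nu\in\PcO$,
\[
W_2^2(\extra(\mu_0,\mu_1),\nu)\ \le\ \alpha\,W_2^2(\mu_1,\nu)-\beta\,W_2^2(\mu_0,\nu)+\alpha\beta\,W_2^2(\mu_0,\mu_1)\,.
\]
It is proven, first for absolutely continuous measures, by gluing the optimal maps from $\extra(\mu_0,\mu_1)$ to $\mu_1$ and to $\mu_0$ (which, by the $\beta:\alpha:1$ rigidity, transport one onto the other optimally) with an optimal plan from $\mu_1$ to $\nu$, invoking the pointwise identity through $\mathrm{id}=\alpha\,\mathrm{b}_1-\beta\,\mathrm{b}_0$ and dropping a nonnegative term, and then in general by approximation. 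Substituting this bound into the minimising-movement estimate and carrying out the BDF2 algebra with $\alpha=\tfrac43$, $\beta=\tfrac13$ --- using once more $W_2(\rho^\alpha_{n-1},\rho_{n-1})\le\tfrac13\sqrt{d_{n-1}}$ --- all error terms collapse to a nonnegative remainder, essentially the square $\tfrac14(\sqrt{d_n}-\sqrt{d_{n-1}})^2$ (the metric trace of the $G$-stability term $\tfrac14|u_n-2u_{n-1}+u_{n-2}|^2$), and are discarded, leaving the clean discrete EVI
\[
\frac{\mc G_n(\nu)-\mc G_{n-1}(\nu)}{\tau}\ \le\ \Ec(\nu)-\Ec(\rho_n)-\frac{\lambda}{2}W_2^2(\rho_n,\nu)\,,\qquad \mc G_n(\nu)\coloneqq\tfrac34 W_2^2(\rho_n,\nu)-\tfrac14 W_2^2(\rho_{n-1},\nu)+\tfrac12 d_n\,,
\]
valid for all $\nu$ and all $n\ge2$. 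Here $\mc G_n(\nu)\ge\tfrac14 W_2^2(\rho_n,\nu)\ge0$ by the triangle inequality; $\mc G_n$ is precisely the metric incarnation of the Dahlquist functional $\tfrac14|u_n|^2+\tfrac14|2u_n-u_{n-1}|^2$; and since $d_n\le\sum_m d_m\le C\tau\to0$, one has $\mc G_n(\nu)\to\tfrac12 W_2^2(\varrho(t),\nu)$ along the converging subsequence.

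\emph{(Step 3: passage to the limit.)} Fixing $\nu$ and $0\le s<t\le T$ and summing the discrete EVI over the corresponding indices, one lets $\tau\to0$ along the subsequence: the boundary terms $\mc G_n(\nu)$ converge to $\tfrac12 W_2^2(\varrho(\cdot),\nu)$, the energy term is handled by the lower semicontinuity of $\Ec$ and Fatou's lemma, and the quadratic term converges by the continuity of $W_2$. This gives the integrated inequality $\tfrac12 W_2^2(\varrho(t),\nu)-\tfrac12 W_2^2(\varrho(s),\nu)\le\int_s^t\!\big(\Ec(\nu)-\Ec(\varrho(r))-\tfrac\lambda2 W_2^2(\varrho(r),\nu)\big)\,\d r$ for all $0\le s<t\le T$; differentiating (the integrand being in $L^1$, the left side absolutely continuous) yields the pointwise EVI of the statement, and $\varrho(0)=\rho_0$ comes from the initialisation. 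Uniqueness of EVI solutions of a $\lambda$-convex energy then forces $\varrho$ to be independent of the subsequence, so the whole family $\tilde\rho_\tau$ converges to it.

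\emph{(Main obstacle.)} The delicate step is the comparison estimate for $\extra$: since problem \eqref{eq:metricextra} is genuinely non-convex (as the counterexample in the Introduction shows), it cannot be obtained by soft convex-analytic perturbation and must be extracted from the precise optimality structure $\mathrm{id}=\alpha\,\mathrm{b}_1-\beta\,\mathrm{b}_0$ together with the rigidity that the distances from $\extra(\mu_0,\mu_1)$ to $\mu_1$, to $\mu_0$, and from $\mu_0$ to $\mu_1$ are in ratio $\beta:\alpha:1$; one must also carry out the gluings without assuming absolute continuity of the iterates --- not guaranteed for a general $\lambda$-convex $\Ec$ --- and resolve the mismatch between $\extra$, a priori valued in $\mc P_2(\R^d)$, and the scheme posed on $\mc P(\Omega)$, exactly as in Theorem \ref{th:convergencefp}.
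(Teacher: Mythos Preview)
Your overall architecture --- discrete EVI, sum, pass to the limit, invoke uniqueness of EVI flows --- matches the paper's, and your compactness step is essentially identical to Lemmas~\ref{lem:almostED}--\ref{lem:bound_W} and Proposition~\ref{prop:conv_rho}. The comparison estimate you isolate,
\[
W_2^2(\extra(\mu_0,\mu_1),\nu)\ \le\ \alpha\,W_2^2(\mu_1,\nu)-\beta\,W_2^2(\mu_0,\nu)+\alpha\beta\,W_2^2(\mu_0,\mu_1),
\]
is exactly the inequality the paper uses (it is \eqref{eq:rhoab_ineq} combined with \eqref{eq:boundbelow}). Your $G$-stability repackaging via the Dahlquist functional $\mc G_n(\nu)$ is a legitimate alternative to the paper's direct telescoping of \eqref{eq:evi_discrete}; both lead to the same integrated EVI in the limit.

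The genuine gap is in your proposed \emph{proof} of the comparison estimate. The ``$\beta:\alpha:1$ rigidity'' you invoke --- that $W_2(\mu_\alpha,\mu_1):W_2(\mu_\alpha,\mu_0):W_2(\mu_0,\mu_1)=\beta:\alpha:1$ and that the optimal maps from $\mu_\alpha$ to $\mu_0$ and to $\mu_1$ compose to the optimal map from $\mu_0$ to $\mu_1$ --- holds \emph{only} when the geodesic from $\mu_0$ to $\mu_1$ extends to time $\alpha$. In general it fails: the shear-flow Example~\ref{ex:shear} (Figure~\ref{fig:shear}) gives $\mu_0,\mu_1$ for which $\mu_\alpha$ does not lie on the geodesic extension, and Proposition~\ref{prop:existence} only yields the inequality $W_2(\mu_1,\mu_\alpha)\le\beta\,W_2(\mu_0,\mu_1)$. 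Your gluing argument therefore does not go through, and since you flagged this as the ``main obstacle'', the proof as written does not close.

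The paper's route to the comparison estimate avoids this entirely: it comes from the $2$-convexity of $\Fc(\mu_0,\mu_1;\cdot)$ along generalised geodesics \emph{based at $\mu_1$} (Lemma~\ref{lem:abconvex}, due to Matthes--Plazotta). This yields \eqref{eq:rhoab_ineq} directly from the minimality of $\mu_\alpha$, and combining with the elementary lower bound \eqref{eq:boundbelow} gives exactly your comparison inequality --- no rigidity, no maps, no absolute-continuity approximation needed. The first-order condition $\mathrm{id}=\alpha\,\mathrm{b}_1-\beta\,\mathrm{b}_0$ (Lemma~\ref{lem:optgamma}) is correct but plays no role here; it is used in the paper only for the Fokker--Planck consistency argument, not for the EVI convergence.
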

	\noindent
	Remarkably, problem \eqref{eq:metricextra} admits a convex dual formulation, see Remark \ref{rem:metricextra_dual}.

	\subsection{Relation with previous works and numerical implementation issues}\label{ssec:vimbdf2}
	
	Going back to the discretization of system \eqref{eq:gradF}, each step of the BDF2 scheme \eqref{eq:BDF2_eu_OC} can also be obtained as the optimality conditions of the following problem:
	\begin{equation}\label{eq:BDF2_eu}
	x_n = \argmin_{x\in\Rd} \alpha \frac{\left|x-x_{n-1}\right|^2}{2(1-\beta)\tau} - \beta \frac{\left|x-x_{n-2}\right|^2}{2(1-\beta)\tau} +  F(x) \,.
	\end{equation}
	This suggests defining a similar formulation in Wasserstein space as follows
	\begin{equation}\label{eq:BDF2_mat}
	\rho_n \in \argmin_{\rho\in\PcO} \alpha \frac{\Wc_2^2(\rho,\rho_{n-1})}{2(1-\beta)\tau} - \beta\frac{\Wc_2^2(\rho,\rho_{n-2})}{2(1-\beta)\tau} + \Ec(\rho) \,.
	\end{equation}
	This approach has been proposed by Matthes and Plazotta \cite{Matthes2019bdf2,plazotta2018fokkerplanck}, who proved equivalent versions of Theorem \ref{th:convergencefp} and \ref{th:convergenceevi}. Even if in the Euclidean setting the analogue problems to \eqref{eq:BDF2_mat} and \eqref{eq:bdf2metric} yield the same solutions, one can check that this is not the case in the Wasserstein space (see, e.g., the example in Figure \ref{fig:equivalence}). However, just as for the metric extrapolation problem \eqref{eq:metricextra},  \eqref{eq:BDF2_mat} is not a convex optimization problem in the classical sense. For this reason, it is not easy to provide a numerical implementation of \eqref{eq:BDF2_mat} when $d\geq 2$. The same is true for the \scheme\ scheme \eqref{eq:bdf2metric} when using the metric extrapolation. Nonetheless, the advantage of using the \scheme\ scheme is that one has some freedom in choosing the extrapolation operator, which makes it more amenable to computations.
	
	\begin{figure}
	\begin{overpic}[scale=.9]{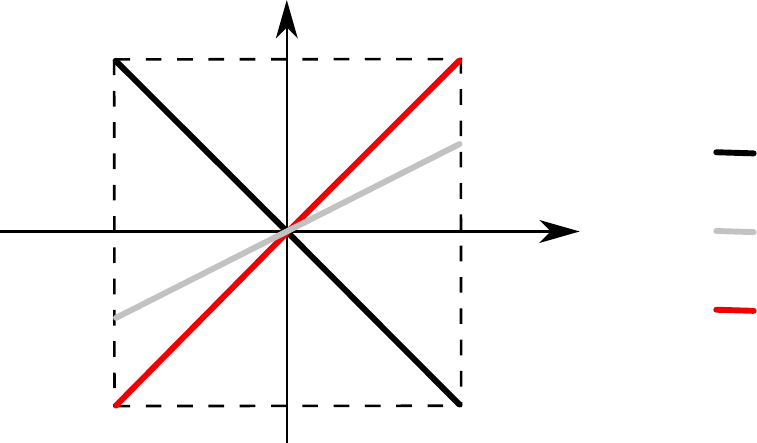}
		\put(205,74){$\rho_{n-2}$}
		\put(205,52){$\rho_{n-1}$}
		\put(142,44){$x$}
		\put(82,107){$y$}
		\put(205,30){$\rho_{n-1}^\alpha$}
	\end{overpic}
	\caption{An example for which the schemes \eqref{eq:bdf2metric} and \eqref{eq:BDF2_mat} provide different results, e.g., for the energy given by the convex indicator function of the set  $\{\mu : \mu(\mathbb{R}^d\setminus\{x=0\}) =0\}$. In the figure $\rho_{n-2}$, $\rho_{n-1}$ and $\rho_{n-1}^\alpha$ are uniformly distributed on the segments $(t,-t)$, $(t,(1-\beta)t/\alpha)$ and $(t,t)$ for $t\in[-1,1]$, respectively (in this case the geodesic from $\rho_{n-2}$ to $\rho_{n-1}$ on the time interval $[0,1]$ can be extendend up to time $\alpha$, yielding $\rho_{n-1}^\alpha$). For the scheme \eqref{eq:bdf2metric} the measure $\rho_n$ is uniformly distributed on the segment  $(0,t)$ for $t\in[-1,1]$, whereas for the scheme \eqref{eq:BDF2_mat} the measure $\rho_n$ can be obtained as the extrapolation of the projections of $\rho_{n-2}$ and $\rho_{n-1}$ on the axis $y$, and can be shown to have a strictly smaller support.
	}\label{fig:equivalence}
	\end{figure}
	
	Another second-order variation of the JKO scheme was proposed by Legendre and Turinici  \cite{Legendre2017VIM}, and it is based on the implicit midpoint rule, which applied to system \eqref{eq:gradF} leads to the scheme: for $n\geq1$ find $x_n \in \mathbb{R}^d$ satisfying
	\[
	\frac{1}{\tau}(x_n-x_{n-1}) = -\grad F\Big(\frac{x_n+x_{n-1}}{2}\Big) \,,
	\]
	which can be obtained as the optimality conditions of the problem
	\begin{equation}\label{eq:VIM_eu}
	x_n = \argmin_{x\in\Rd} \frac{ \left|x-x_{n-1}\right|^2}{2\tau} + 2 F\Big(\frac{x+x_{n-1}}{2}\Big) \,.
	\end{equation}
	Translating such a scheme to the Wasserstein setting yields the Variational Implicit Midpoint (VIM) scheme proposed in \cite{Legendre2017VIM}: for $n\geq1$ find $\rho_n \in \PcO$ satisfying
	\begin{equation}\label{eq:VIM}
	\rho_n \in \argmin_{\rho\in\PcO} \frac{\Wc_2^2(\rho,\rho_{n-1})}{2\tau}  + 2 \Ec(\rho_{n-1/2}) \,,
	\end{equation}
	where ${\rho}_{n-1/2}$ is the midpoint of the (not necessarily unique) geodesic between $\rho$ and $\rho_{n-1}$.
	Also in this case, it is not evident how to implement such a scheme, as it requires an explicit formula for the midpoint given the initial and final measures. This may also lead to convexity issues. Notice however that in the same spirit of our formulation of the BDF2 scheme, the implicit midpoint scheme can be formulated in the following alternative way: for $n\geq1$ find $\rho_n \in \PcO$ satisfying 
	\begin{equation}\label{eq:VIM2}
	\rho_n = {\sf E}_2 (\rho_{n-1},\rho_{n-1/2})\,, \quad \rho_{n-1/2} \in \argmin_{\rho\in\PcO} \frac{\Wc_2^2(\rho,\rho_{n-1}) }{\tau} +  \Ec(\rho) \,,
	\end{equation}
	where ${\sf E}_2 (\rho_{n-1},\rho_{n-1/2})$ denotes the extrapolation at time $\alpha =2$ of a geodesic from $\rho_{n-1}$ (at time 0) to $\rho_{n-1/2}$ (at time 1). In general, this leads to a different discrete solution than the one obtained with \eqref{eq:VIM}, although the two schemes coincide if there exists a unique geodesic extension from $\rho_{n-1}$  to $\rho_{n-1/2}$ which stays globally length-minimizing up to time 2 for all $n$.
	Nevertheless, the behavior of scheme \eqref{eq:VIM2} is radically different from that of the \scheme\ \eqref{eq:bdf2metric}, due to the different way JKO steps and extrapolations are performed. Namely, the order of the operations as well as the length of the steps play a crucial role. We will investigate this phenomenon numerically by considering a fully-discrete version of the VIM scheme and show that in general this approach may lead to persistent oscillations in the solution (Section \ref{ssec:comparison}).
	
	Providing a fully discrete version of problem \eqref{eq:model}, via the \scheme\ scheme \eqref{eq:bdf2metric}, comes with an additional challenge since the chosen space discretization should also be second-order accurate in space, in order to exploit the increased accuracy of the time discretization.
	We propose a discretization in the Eulerian framework of finite volumes. Specifically, we implement Two Point Flux Approximation (TPFA) finite volumes, which have been extensively analyzed lately for the discretization of optimal transport and Wasserstein gradient flows \cite{gladbach2018scaling,erbar2020computation,natale2021computation,cances2020LJKO,natale2020FVCA}.
	Following these last two works in particular, we propose a scheme in which the Wasserstein distance is locally linearized, at each step of the scheme, in order to decrease the computational complexity of the approach, without dropping the second-order accuracy in time. In addition, we propose one possible discrete version of the extrapolation in this setting, which can be implemented in a robust way, and we verify numerically the second-order accuracy of the resulting approach.
	
	We stress that the space discretization of the \scheme\ scheme that we propose, even if maintaining its variational structure, relies on substantial simplifications of the original problem. As a consequence, our theoretical results do not apply directly, and further work is required for a fully discrete convergence proof. Given this, the numerical results presented in Section \ref{sec:numerics} are only preliminary and they are mainly meant to demonstrate the feasibility of the approach.
	
	\section{Preliminaries and notation}\label{sec:prem}
	
	Let $\mc{P}_2(\mathbb{R}^d)$ be the space of probability measures with finite second moments. Given  $\mu_0, \mu_1 \in \mc{P}_2(\mathbb{R}^d)$, we denote by $W_2(\mu_0,\mu_1)$ the $L^2$-Wasserstein distance between $\mu_0$ and $\mu_1$ (see, e.g., 
	Chapter 5 in \cite{santambrogio2015optimal}). This can be defined via the following minimization problem:
	\begin{equation}\label{eq:w2}
	W_2^2(\mu_0,\mu_1)\coloneqq \min_{\substack{\gamma \in \Pi(\mu_0,\mu_1)}} \int {|x-y|^2}\,\ed \gamma(x,y) \,,
	\end{equation}
	where $\Pi(\mu_0,\mu_1)$ is the set of probability measures on $\mathbb{R}^d\times \mathbb{R}^d$ with marginals $\mu_0$ and $\mu_1$. This problem always admits a solution $\gamma^*$, although it is not necessarily unique, which we refer to as an optimal transport plan from $\mu_0$ to $\mu_1$. By linearity of the constraint and of the function minimized in \eqref{eq:w2}, one can easily check that the function $W_2^2$ is jointly convex with respect to its arguments (with respect to the linear structure of $\mc{P}_2(\mathbb{R}^d)$). We will refer to the space of probability measures $\mc{P}_2(\mathbb{R}^d)$ 
	equipped with the metric $W_2$ as the Wasserstein space.
	
	Problem \eqref{eq:w2} admits an alternative dynamical formulation, which was introduced by Benamou and Brenier in \cite{benamou2000computational}, and which reads as follows:
	\begin{equation}\label{eq:w2dynamic}
	W_2^2(\mu_0,\mu_1) = (t_1 - t_0) \min_{(\omega, v) \in \mathcal{C}}  \int_{t_0}^{t_1} \ed t \int \omega(t) |v(t,\cdot)|^2
	\end{equation}
	where $\mc{C}$ is the set of curves $(\omega,v)$ with finite total kinetic energy, with $\omega:[t_0,t_1] \rightarrow \mc{P}_2(\mathbb{R}^d)$ and $ v: [t_0,t_1] \rightarrow L^2(\omega(t);\mathbb{R}^d)$, satisfying weakly the continuity equation 
	\begin{equation}\label{eq:continuity}
	\partial_t \omega + \mathrm{div}(\omega v ) = 0
	\end{equation}
	with zero flux boundary conditions (i.e.\ $\omega \,v \cdot n_{\partial \Omega} = 0 $), and initial and final conditions $
	\omega(t_0) = \mu_0 $, $ \omega(t_1) = \mu_1$. The minimum in \eqref{eq:w2dynamic} is always achieved although there might be multiple minimizers. In particular, one can use formula \eqref{eq:w2dynamic} to deduce that the Wasserstein space is a geodesic space and the minimizers $\omega$ are geodesics.
	
	By the optimality conditions of problem \eqref{eq:w2dynamic}, a curve $\omega$ is a geodesic if and only if there exists a potential $\phi:[t_0,t_1]\times \mathbb{R}^d \rightarrow \mathbb{R}$ that verifies:
	\begin{enumerate}
	\item $\phi(t_0, \cdot)$ is a continuous $(-(t_1-t_0)^{-1})$-convex function, i.e.\ such that the so-called Brenier potential 
	\begin{equation}\label{eq:u} 
		x\mapsto u(x) \coloneqq  (t_1-t_0) \phi(t_0,x) + \frac{|x|^2}{2} \quad \text{is convex}\,;
	\end{equation}
	\item the potential $\phi$ is the unique viscosity solution of the Hamilton-Jacobi equation 
	\begin{equation}\label{eq:hj}
		\displaystyle \partial_t \phi + \frac{|\nabla \phi |^2}{2} =  0\,,
	\end{equation}
	or equivalently, it verifies the Hopf-Lax representation formula,
	\begin{equation}\label{eq:hamiltonjacobiintro}
		\phi(t,x) = \inf_{y\in\mathbb{R}^d} \frac{|x-y|^2}{2(t-t_0)} + \phi(t_0,y)\,;
	\end{equation}
	{\item $\nabla \phi(t,\cdot) \in L^2(\omega(t);\mathbb{R}^d)$ for a.e.\ $t\in[t_0,t_1]$ and $(\omega,\nabla \phi) \in \mc{C}$.}
	\end{enumerate}
	We say that a function $\phi$ verifying these condition is an optimal potential from $\mu_0$ to $\mu_1$ on the time interval $[t_0,t_1]$.
	Furthermore, for any optimal potential $\phi$, it holds:
	\begin{equation}\label{eq:formulationHJ}
	\frac{W_2^2(\mu_0,\mu_1)}{2(t_1-t_0)} = \int \phi(t_1,\cdot) \mu_1-\int \phi(t_0,\cdot) \mu_0\,.
	\end{equation}
	
	Because of the semi-convexity of $\phi(t_0,\cdot)$, the maps $X(t,\cdot)$, defined a.e.\ by
	\begin{equation}\label{eq:xflow}
	X(t,\cdot) \coloneqq \mathrm{Id} + (t-t_0) \nabla \phi(t_0,\cdot)
	\end{equation}
	are injective for all $t\in[t_0,t_1)$ (as the gradient of a strongly convex function), and the resulting curve of maps $X:[t_0,t_1]\times \mathbb{R}^d \rightarrow \mathbb{R}^d$ is the Lagrangian flow of the time-dependent vector field $\nabla \phi(t,\cdot)$, i.e., for a.e.\ $x\in\mathbb{R}^d$, $X(\cdot,x)$ solves the flow equation
	\[
	\frac{\ed}{\ed t} X(t,x) = \nabla \phi(t,X(t,x)),\quad X(t_0,x) = x \,.
	\]
	
	If $\mu_0$ is absolutely continuous, given an optimal potential $\phi$ and the associated Lagrangian flow $X$ defined by \eqref{eq:xflow}, one can easily verify that the curve 
	\begin{equation}\label{eq:rhopush}
	\omega(t) = X(t,\cdot)_\# \mu_0
	\end{equation}
	solves the continuity equation with velocity $\nabla \phi$ and boundary conditions $\omega(0)=\mu_0$ and $\omega(1)= \mu_1$ (in distributional sense), and therefore it is a geodesic. Moreover, using the absolute continuity of $\mu_0$, one can also show that the initial potential $\phi(t_0,\cdot)$ is uniquely defined $\mu_0$-a.e., and no other geodesic curve exists connecting $\mu_0$ and $\mu_1$.
	Note also that  from \eqref{eq:rhopush}, one can recover Brenier's result \eqref{eq:brenierth} with the Brenier potential $u$ as in \eqref{eq:u}, and also verify the equivalence with formulation \eqref{eq:w2}. As a matter of fact, in this case the optimal transport plan is also unique and is given by $\gamma^* = (\mathrm{Id},\nabla u)_\# \mu_0$, where the map $\nabla u$ is the so-called optimal transport map from $\mu_0$ to $\mu_1$. On the other hand, for any convex function $u$, setting $\phi(0,\cdot)$ via \eqref{eq:u}, the curve $\omega$ defined in \eqref{eq:rhopush} is a geodesic between $\mu_0$ and $(\nabla u)_\# \mu_0$ (and the unique one, if $\mu_0$ is absolutely continuous).

	\section{Analysis of the \scheme\ scheme}\label{sec:LJKO2analysis}
	
	In this section we collect the main properties of the \scheme\ discretization \eqref{eq:bdf2metric}, and in particular we prove Theorem \ref{th:convergencefp}, which establishes the convergence of the discrete flow generated by the scheme to the linear Fokker-Planck equation. Throughout the section, $(\rho_n)_n$ denotes a sequence of measures generated by the \scheme\ scheme \eqref{eq:bdf2metric}, where $\sf{E}_\alpha$ is a $\theta$-dissipative extrapolation, with $\theta<1/2$.
	
	\subsection{Well-posedness and classical estimate}\label{ssec:well-posedness} We start by stating some a priori bounds, which are valid for a general class of energies.
	In particular, in this paragraph, we only assume that $\mc{E}$ is lower semi-continuous with respect to the weak-* topology. Since $\mc{P}(\Omega)$ is compact for this topology (we recall that we assume $\Omega$ compact) this also implies that $\mc{E}$ is bounded from below. Problem \eqref{eq:bdf2metric} therefore admits a minimizer at each step $n$.
	
	\begin{lemma}\label{lem:almostED}
	At each step $n$, the solution $\rho_{n}$ satisfies the following inequality
	\begin{equation}\label{eq:almostED}
		(1-\theta)\frac{\Wc_2^2(\rho_{n},\rho_{n-1})}{2(1-\beta)\tau} + \Ec(\rho_{n}) \leq \theta \frac{\Wc_2^2(\rho_{n-1},\rho_{n-2})}{2(1-\beta)\tau}  + \Ec(\rho_{n-1})\,.
	\end{equation}
	\end{lemma}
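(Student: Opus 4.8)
The plan is to use the minimality of $\rho_n$ in the inner variational problem \eqref{eq:bdf2metric}, testing it against the competitor $\rho_{n-1}$, and then to trade the distances to the extrapolated measure $\rho_{n-1}^\alpha=\extra(\rho_{n-2},\rho_{n-1})$ for the distances $\Wc_2(\rho_n,\rho_{n-1})$ and $\Wc_2(\rho_{n-1},\rho_{n-2})$ via the triangle inequality and the dissipativity bound \eqref{eq:dissipation}. Concretely, since $\rho_{n-1}\in\PcO$ is admissible in \eqref{eq:bdf2metric} and a minimiser $\rho_n$ exists (by lower semicontinuity of $\Ec$ and compactness of $\PcO$, as recalled just above), comparing the value of the functional at $\rho_n$ with its value at $\rho_{n-1}$ gives
\begin{equation*}
\frac{\Wc_2^2(\rho_n,\rho_{n-1}^\alpha)}{2(1-\beta)\tau}+\Ec(\rho_n)\leq\frac{\Wc_2^2(\rho_{n-1},\rho_{n-1}^\alpha)}{2(1-\beta)\tau}+\Ec(\rho_{n-1})\,.
\end{equation*}

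Next I would lower bound the left-hand distance using the reverse triangle inequality, $\Wc_2(\rho_n,\rho_{n-1}^\alpha)\geq|\Wc_2(\rho_n,\rho_{n-1})-\Wc_2(\rho_{n-1},\rho_{n-1}^\alpha)|$, and square it (both sides being nonnegative):
\begin{equation*}
\Wc_2^2(\rho_n,\rho_{n-1}^\alpha)\geq\Wc_2^2(\rho_n,\rho_{n-1})-2\,\Wc_2(\rho_n,\rho_{n-1})\Wc_2(\rho_{n-1},\rho_{n-1}^\alpha)+\Wc_2^2(\rho_{n-1},\rho_{n-1}^\alpha)\,.
\end{equation*}
Substituting into the minimality inequality, the terms $\Wc_2^2(\rho_{n-1},\rho_{n-1}^\alpha)$ cancel on both sides, leaving
\begin{equation*}
\frac{\Wc_2^2(\rho_n,\rho_{n-1})}{2(1-\beta)\tau}+\Ec(\rho_n)\leq\frac{2\,\Wc_2(\rho_n,\rho_{n-1})\Wc_2(\rho_{n-1},\rho_{n-1}^\alpha)}{2(1-\beta)\tau}+\Ec(\rho_{n-1})\,.
\end{equation*}

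The final step is a weighted Young inequality on the cross term with the \emph{specific} weight $\theta$, i.e.\ $2ab\leq\theta a^2+\theta^{-1}b^2$ with $a=\Wc_2(\rho_n,\rho_{n-1})$ and $b=\Wc_2(\rho_{n-1},\rho_{n-1}^\alpha)$, followed by the dissipativity bound \eqref{eq:dissipation} in the form $b=\Wc_2(\rho_{n-1},\extra(\rho_{n-2},\rho_{n-1}))\leq\theta\,\Wc_2(\rho_{n-2},\rho_{n-1})$, so that $\theta^{-1}b^2\leq\theta\,\Wc_2^2(\rho_{n-2},\rho_{n-1})$. This produces
\begin{equation*}
\frac{\Wc_2^2(\rho_n,\rho_{n-1})}{2(1-\beta)\tau}+\Ec(\rho_n)\leq\frac{\theta\,\Wc_2^2(\rho_n,\rho_{n-1})+\theta\,\Wc_2^2(\rho_{n-2},\rho_{n-1})}{2(1-\beta)\tau}+\Ec(\rho_{n-1})\,,
\end{equation*}
and moving the first term on the right to the left, together with the symmetry $\Wc_2(\rho_{n-2},\rho_{n-1})=\Wc_2(\rho_{n-1},\rho_{n-2})$, gives exactly \eqref{eq:almostED}. (If $\theta=0$ the Young step is unavailable, but then \eqref{eq:dissipation} forces $\rho_{n-1}^\alpha=\rho_{n-1}$ and \eqref{eq:almostED} is nothing but the first displayed inequality, i.e.\ the classical JKO estimate.)

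I do not expect any genuine obstacle: the proof is short and the only delicate point is choosing the Young weight equal to $\theta$ rather than a free parameter $\varepsilon$, which is precisely what makes the coefficients $1-\theta$ on the left and $\theta$ on the right match the statement — any other weight yields a strictly weaker inequality. Note also that the hypothesis $\theta<1/2$ is not needed for this lemma (it will enter later, in the discrete summation/Gronwall argument); here only $\theta<1$ is used implicitly, to ensure that the coefficient $1-\theta$ on the left-hand side is positive and the estimate is meaningful.
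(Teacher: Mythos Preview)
Your proof is correct and follows essentially the same approach as the paper: test minimality of $\rho_n$ against $\rho_{n-1}$, then combine the triangle inequality, Young's inequality, and the $\theta$-dissipativity bound \eqref{eq:dissipation}. The only cosmetic difference is that the paper applies the forward triangle inequality (with Young parameter $c=\theta^{-1}-1$) to bound $\Wc_2^2(\rho_n,\rho_{n-1})$ from above, whereas you use the reverse triangle inequality to bound $\Wc_2^2(\rho_n,\rho_{n-1}^\alpha)$ from below, which makes the $\Wc_2^2(\rho_{n-1},\rho_{n-1}^\alpha)$ term cancel cleanly and forces the Young weight to be exactly $\theta$; the two computations are algebraically equivalent.
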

	\begin{proof}
	Due to the optimality of $\rho_{n}$ and using \eqref{eq:dissipation}, we can write
	\[
	\begin{aligned}
		\frac{\Wc_2^2(\rho_{n},\rhoab_{n-1})}{2(1-\beta)\tau}   + \Ec(\rho_{n}) & \le \frac{\Wc_2^2(\rho_{n-1},\rhoab_{n-1}) }{2(1-\beta)\tau}   + \Ec(\rho_{n-1}) \\
		&\le  \frac{\theta^2}{2(1-\beta)\tau} \Wc_2^2(\rho_{n-1},\rho_{n-2}) + \Ec(\rho_{n-1})\,.
	\end{aligned}
	\]
	If $\theta=0$ this coincides with \eqref{eq:almostED}. If $\theta>0$, observe that
	by the triangular and Young's inequalities, for any $c>0$,
	\[
	\Wc_2^2(\rho_{n},\rho_{n-1}) \le \Big(1+\frac{1}{c}\Big)\Wc_2^2(\rho_{n},\rhoab_{n-1})+(1+c)\Wc_2^2(\rho_{n-1},\rhoab_{n-1})\,.
	\]
	Setting $c=\theta^{-1}-1$ in this last inequality and using again \eqref{eq:dissipation}, we can estimate the left-hand side from below using
	\[
	\begin{aligned}
		\frac{\Wc_2^2(\rho_{n},\rhoab_{n-1})}{2(1-\beta)\tau}   &\ge \frac{1}{2(1-\beta)\tau} \left(\frac{c}{c+1} \Wc_2^2(\rho_{n},\rho_{n-1}) -c\Wc_2^2(\rho_{n-1},\rhoab_{n-1})\right)   \\
		&\ge \frac{1-\theta}{2(1-\beta)\tau} \Wc_2^2(\rho_{n},\rho_{n-1}) - \frac{(1-\theta)\theta}{2(1-\beta)\tau}\Wc_2^2(\rho_{n-1},\rho_{n-2})  \,.
	\end{aligned}
	\]
	Rearranging, we obtain \eqref{eq:almostED}.
	\end{proof}
	
	Note that if we take $\beta =0$, i.e.\ we remove the extrapolation step, we can take $\theta=0$ in \eqref{eq:almostED} and recover the standard dissipation estimate for the JKO scheme.

	\begin{lemma}\label{lem:bound_W}
	Let $C_1>0$ be a constant such that $\Wc_2^2(\rho_1,\rho_0)\le C_1\tau$ and $\Ec(\rho_1)\leq \Ec(\rho_0)$. Then, it holds:
	\begin{equation}\label{eq:bound_W}
		\frac{1}{\tau} \sum_{n=0}^{N_{\tau}} \Wc_2^2(\rho_{n},\rho_{n-1}) \le C
	\end{equation}
	for a constant $C>0$ depending only on $C_1$, $\beta$, $\theta$, $\mc{E}$ and $\rho_0$.
	\end{lemma}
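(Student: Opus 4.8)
The plan is to obtain \eqref{eq:bound_W} by telescoping the almost-dissipation inequality \eqref{eq:almostED} of Lemma \ref{lem:almostED}, after absorbing part of the Wasserstein term into a quantity that is non-increasing along the scheme. Concretely, I would introduce the shorthands $E_n \coloneqq \Wc_2^2(\rho_n,\rho_{n-1})/(2(1-\beta)\tau)$ and $a_n \coloneqq \theta E_n + \Ec(\rho_n)$. Splitting $(1-\theta)E_n = (1-2\theta)E_n + \theta E_n$, inequality \eqref{eq:almostED} becomes $(1-2\theta)E_n + a_n \le a_{n-1}$ for every $n\ge 2$. Here the assumption $\theta<1/2$ is precisely what makes the coefficient $1-2\theta$ strictly positive, so summing this relation over $n=2,\dots,N_\tau$ and telescoping yields $(1-2\theta)\sum_{n=2}^{N_\tau} E_n \le a_1 - a_{N_\tau}$.

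It then remains to control the two boundary contributions. For $a_{N_\tau}$, I would use that $E_{N_\tau}\ge 0$ together with the fact that $\Ec$ is bounded from below on the weak-$*$ compact set $\PcO$ (as recalled at the beginning of this subsection), so that $a_{N_\tau} \ge \inf_{\PcO}\Ec > -\infty$. For $a_1$, the hypotheses $\Wc_2^2(\rho_1,\rho_0)\le C_1\tau$ and $\Ec(\rho_1)\le\Ec(\rho_0)$ give directly $a_1 \le \theta C_1/(2(1-\beta)) + \Ec(\rho_0)$. Substituting these two bounds and unwinding the definition of $E_n$ gives $\tau^{-1}\sum_{n=2}^{N_\tau}\Wc_2^2(\rho_n,\rho_{n-1}) \le \tfrac{2(1-\beta)}{1-2\theta}\bigl(\theta C_1/(2(1-\beta)) + \Ec(\rho_0) - \inf_{\PcO}\Ec\bigr)$; adding the lowest-index term, which is bounded by $\Wc_2^2(\rho_1,\rho_0)/\tau\le C_1$ via the first hypothesis, produces \eqref{eq:bound_W} with a constant $C$ depending only on $C_1$, $\beta$, $\theta$, $\Ec$ and $\rho_0$.

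I do not expect any genuine obstacle here: this is the standard a priori estimate for variational time discretizations, and the two ingredients it rests on — the strict sign $\theta<1/2$, which keeps $1-2\theta>0$ in the telescoping, and the lower bound of $\Ec$ on $\PcO$ — are both already in place. The only minor point to settle is the harmless bookkeeping of the end indices, treating the lowest-index term separately from the telescoped sum.
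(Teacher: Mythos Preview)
Your proposal is correct and follows essentially the same approach as the paper: both arguments telescope inequality \eqref{eq:almostED} over $n$, use $\theta<1/2$ to keep the coefficient $1-2\theta$ positive, and bound the boundary terms via $\Ec(\rho_1)\le\Ec(\rho_0)$, $\Wc_2^2(\rho_1,\rho_0)\le C_1\tau$, and the lower bound on $\Ec$. Your auxiliary quantities $E_n$ and $a_n$ just package the same telescoping slightly more cleanly than the paper's direct summation.
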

	\begin{proof}
	Summing over $n$ the inequality \eqref{eq:almostED} we obtain
	\begin{equation}\label{eq:EDlong}
		\frac{1-2\theta}{2(1-\beta)\tau} \sum_{n=0}^{N} \Wc_2^2(\rho_{n},\rho_{n-1}) \le  \Ec(\rho_1)-\Ec(\rho_{n}) + 		\frac{\theta}{(1-\beta)\tau}  \Wc_2^2(\rho_1,\rho_0)\,,
	\end{equation}
	Then, since $\theta<1/2$  and thanks to the lower bound on the energy and the assumption $\mc{E}(\rho_1)\leq \mc{E}(\rho_0)$, we have
	\[
	\frac{1}{\tau} \sum_{n=0}^{N} \Wc_2^2(\rho_{n},\rho_{n-1})  \le 		\frac{2(1-\beta)}{1-2\theta} \Big( \Ec(\rho_0)- \inf \Ec \Big) +\frac{2\theta}{1-2\theta}C_1 \,.
	\]
	\end{proof}
	
	\begin{remark} For a given $\rho_0$, one can always choose $\rho_1$ so that the constant $C_1$ above is independent of $\tau$ and $\mc{E}(\rho_1)\leq \mc{E}(\rho_0)$, which are also the assumptions in the statements of Theorems \ref{th:convergencefp} and \ref{th:convergenceevi}. For example, it is sufficient to take $\rho_1$ as the solution obtained after a finite number $N_0\in \mathbb{N}$ of JKO steps with time step $\tau/N_0$ and initial condition given by $\rho_0$, with $\Ec(\rho_0)<\infty$. In fact, in this case, by the same proof as for Lemma \ref{lem:bound_W} (with $\beta=\theta=0$), one can take $C_1 = 2 (\Ec(\rho_0)- \inf \Ec )$.
	\end{remark}

	\subsection{Convergence towards the Fokker-Planck equation}\label{ssec:LJKO2convergence}
	
	Given a Lipschitz continuous exterior potential $V\in W^{1,\infty}(\Om)$, the Fokker-Planck equation is given by
	\begin{equation}\label{eq:FokkerPlanckLJKO2}
	\partial_t \rhoe = \Delta \rhoe + \mathrm{div} (\rhoe \nabla V) \quad \text{in } (0,T)\times\Om \,,
	\end{equation}
	complemented with no-flux boundary conditions $(\grad \rhoe +\rhoe \grad V) \cdot n_{\partial \Omega} = 0$ on $\p \Om$ and an initial condition $\rhoe(0,\cdot)=\rho_0\in\PcO$. Equation \eqref{eq:FokkerPlanckLJKO2} can be interpreted as a Wasserstein gradient flow with respect to the energy functional $\mc{E}:\mc{P}(\Omega) \rightarrow \mathbb{R}$ given by
	\begin{equation}\label{eq:FokkerPlanckEnergy}
	\Ec(\rho) = \Uc(\rho) + \intO \rho V \,,
	\end{equation}
	where the internal energy $\Uc:\mc{P}(\Omega) \rightarrow \mathbb{R}$ (the entropy) is defined by
	\begin{equation}\label{eq:entropy}
	\Uc(\rho) \coloneqq
	\begin{cases}\displaystyle
		\intO  \log\left(\frac{\ed\rho}{\ed x}\right) \ed \rho \quad &\text{if $\rho \ll \ed x\mres \Omega$} \,, \\
		+\infty \quad &\text{otherwise} \,,
	\end{cases}
	\end{equation}
	where $\ed x\mres \Omega$ denotes the restriction of the Lebesgue measure to the domain $\Omega$. 
	Since the function $x\mapsto x\log x$ is strictly convex and superlinear, the energy $\mc{E}$ is also strictly convex on its domain (with respect to the linear structure of $\mc{P}(\Omega)$) and lower semi-continuous (with respect to the weak-* topology: see, e.g., Proposition 7.7 in \cite{santambrogio2015optimal}). Since $W_2^2$ is continuous and convex in its arguments, there exists a unique solution $\rho_{n}$ to problem \eqref{eq:bdf2metric} at each step $n$, and this is furthermore absolutely continuous with respect to $\ed x \mres \Omega$. Moreover, both Lemmas  \ref{lem:almostED} and \ref{lem:bound_W} apply.
	
	As in the previous paragraph, we assume that $\sf{E}_\alpha$ is a $\theta$-dissipative extrapolation with $\theta<1/2$, and $(\rho_n)_n$ denotes a sequence of measures generated by the associated \scheme\ scheme \eqref{eq:bdf2metric}. 
	Although the discrete flow does not move by strictly minimizing the energy at each step (see Lemma \ref{lem:almostED}), we will show that it converges to the maximal slope curve of $\Ec$. For this, we will rely on the same arguments as in the original work of Jordan, Kinderlehrer, and Otto \cite{jordan1998fokkerplanck} for the JKO scheme.
	
	Relying on the estimate \eqref{eq:bound_W}, the compactness arguments for obtaining a limit curve are rather standard.
	We introduce two density curves on the interval $[0,T]$, given by
	\begin{equation}\label{eq:interpolations}
	\begin{aligned}
		&\rhoe_{\tau}(t) = \sum_{n=1}^{N} \rho_{n-1} \indf_{(t_{n-1},t_{n}]} \,, \quad \rho_{\tau}(0) = \rho_0 \,, \\
		&\tilde{\rhoe}_{\tau}(t) =  \sum_{n=1}^{N} \tilde{\rhoe}_{n}(t) \indf_{(t_{n-1},t_{n}]} \,, \quad \tilde{\rho}_{\tau}(0) = \rho_0 \,,
	\end{aligned}	
	\end{equation}
	with $t \mapsto \tilde{\rhoe}_{n}(t)$ being the geodesic curve between $\rho_{n-1}$ and $\rho_{n}$ on the time interval $[t_{n-1},t_{n}]$ (i.e.\ the minimizer of problem \eqref{eq:w2dynamic} on this interval). Let  $\tilde{v}_n$ be the associated optimal vector field as in problem \eqref{eq:w2dynamic} for all $1\leq n \leq N$. By definition of $\tilde{\rhoe}_\tau$, we have that 
	\[
	\p_t \tilde{\rhoe}_{\tau} +\mathrm{div} (\tilde{\rhoe}_{\tau}\tilde{v}_{\tau}) = 0 
	\]
	in the distributional sense on $(0,T) \times\Om$,
	where $\tilde{v}_{\tau}$ is the vector field defined by $\tilde{v}_{\tau}|_{(t_{n-1},t_n]} = \tilde{v}_n$ for all $1\leq n \leq N$. Moreover, on each interval $[t_{n-1},t_{n}]$ it holds:
	\[
	\Wc_2^2(\rho_{n},\rho_{n-1}) = \tau \int_{t_{n-1}}^{t_{n}}\intO \tilde{\rhoe}_{\tau} |\tilde{v}_{\tau}|^2 \,.
	\]
	The curve $\rhoe_{\tau}$ is a piecewise constant measure-valued curve whereas $\tilde{\rhoe}_{\tau}$ is a (absolutely) continuous one, interpolating the discrete densities.
	
	\begin{proposition}\label{prop:conv_rho}
	For a given $\rho_0$ and any given $N\geq 1$, let $\rho_\tau$ be the curve defined as in equation \eqref{eq:interpolations}, with $\rho_1$ being such that $W^2_2(\rho_0,\rho_1) \leq C \tau$,  for  a constant $C>0$ independent of $\tau$, and $\mc{E}(\rho_1)\leq \mc{E}(\rho_0)$. Then, the sequence $(\rhoe_{\tau})_{\tau}$ converges uniformly in the $\Wc_2$ distance to an absolutely continuous curve $\rhoe:[0,T]\rightarrow \mc{P}(\Omega)$.
	\end{proposition}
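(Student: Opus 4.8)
This is a routine compactness argument, very much in the spirit of the convergence proof for the JKO scheme in \cite{jordan1998fokkerplanck} (see also \cite{ambrosio2008gradient}); the only genuinely new ingredient is the uniform bound \eqref{eq:bound_W} of Lemma~\ref{lem:bound_W}, which holds precisely under the hypotheses made here on $\rho_1$. My plan is: (i) promote \eqref{eq:bound_W} to a $\tau$-uniform H\"older estimate for the geodesic interpolation $\tilde\rhoe_\tau$; (ii) extract a uniformly convergent subsequence by Arzel\`a--Ascoli, using that $(\PcO,\Wc_2)$ is a compact metric space (as $\Om$ is compact); (iii) verify that the limit curve is genuinely absolutely continuous; and (iv) deduce the same convergence for the piecewise constant curve $\rhoe_\tau$.

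For step (i) I would use that on each subinterval $(t_{n-1},t_n)$ the interpolant $\tilde\rhoe_n$ is a constant-speed geodesic, so its metric derivative there equals $\Wc_2(\rho_n,\rho_{n-1})/\tau$ (equivalently, $|\dot{\tilde\rhoe}_\tau(r)|^2 = \intO \tilde\rhoe_\tau(r)\,|\tilde v_\tau(r)|^2$); summing over $n$ and invoking \eqref{eq:bound_W},
\[
\int_0^T |\dot{\tilde\rhoe}_\tau(r)|^2\,\ud r \;=\; \frac1\tau\sum_{n=1}^{N}\Wc_2^2(\rho_n,\rho_{n-1}) \;\le\; C ,
\]
with $C$ independent of $\tau$. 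Cauchy--Schwarz then gives, for all $0\le s\le t\le T$,
\[
\Wc_2(\tilde\rhoe_\tau(s),\tilde\rhoe_\tau(t)) \;\le\; \int_s^t |\dot{\tilde\rhoe}_\tau(r)|\,\ud r \;\le\; (t-s)^{1/2}\Big(\int_0^T |\dot{\tilde\rhoe}_\tau(r)|^2\,\ud r\Big)^{1/2} \;\le\; \sqrt{C}\,(t-s)^{1/2},
\]
so the family $\{\tilde\rhoe_\tau\}_\tau$ is equicontinuous with a $\tau$-independent modulus. Since $\PcO$ is $\Wc_2$-compact, the Arzel\`a--Ascoli theorem yields a subsequence (not relabelled) along which $\tilde\rhoe_\tau$ converges uniformly in $\Wc_2$ to some $\rhoe\in C([0,T];\PcO)$, which inherits the bound $\Wc_2(\rhoe(s),\rhoe(t))\le\sqrt C\,(t-s)^{1/2}$.

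Step (iii) is the one point calling for a little care, since $1/2$-H\"older continuity does not by itself imply absolute continuity. Here I would argue via the metric speeds $m_\tau \coloneqq |\dot{\tilde\rhoe}_\tau|$, which by the first display are bounded in $L^2(0,T)$; passing to a further subsequence, $m_\tau \rightharpoonup m$ weakly in $L^2(0,T)$, and letting $\tau\to0$ in $\Wc_2(\tilde\rhoe_\tau(s),\tilde\rhoe_\tau(t)) \le \int_s^t m_\tau(r)\,\ud r$ (using uniform convergence on the left and weak convergence against $\indf_{[s,t]}$ on the right) gives $\Wc_2(\rhoe(s),\rhoe(t)) \le \int_s^t m(r)\,\ud r$ for all $s\le t$, with $m\in L^1(0,T)$. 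This exhibits an integrable upper gradient for $\rhoe$, hence $\rhoe$ is absolutely continuous, with $|\dot\rhoe|\le m$ a.e.\ and in particular $|\dot\rhoe|\in L^2(0,T)$ --- a fact that will be needed when passing to the limit in the scheme.

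Finally, for step (iv), for $t\in(t_{n-1},t_n]$ one has $\rhoe_\tau(t)=\rho_{n-1}$ while $\tilde\rhoe_\tau(t)$ lies on the geodesic joining $\rho_{n-1}$ and $\rho_n$, so $\Wc_2(\rhoe_\tau(t),\tilde\rhoe_\tau(t))\le\Wc_2(\rho_{n-1},\rho_n)$; since each summand in \eqref{eq:bound_W} is at most the whole (nonnegative) sum, $\Wc_2^2(\rho_{n-1},\rho_n)\le C\tau$, so $\sup_{t\in[0,T]}\Wc_2(\rhoe_\tau(t),\tilde\rhoe_\tau(t))\le\sqrt{C\tau}\to0$ and $\rhoe_\tau$ converges uniformly to the same $\rhoe$. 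I do not expect any real obstacle in this proof: everything reduces to compactness in $C([0,T];\PcO)$ plus weak lower semicontinuity of the action, the only mildly delicate point being the one isolated in step (iii). (Strictly speaking this produces a convergent subsequence; the convergence of the whole family $(\rhoe_\tau)_\tau$ follows once the limit is characterised uniquely, which is done in Theorem~\ref{th:convergencefp}.)
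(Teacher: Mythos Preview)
Your proof is correct and follows essentially the same route as the paper's: uniform $1/2$-H\"older bound for $\tilde\rhoe_\tau$ via the action estimate \eqref{eq:bound_W}, Arzel\`a--Ascoli on the compact space $(\PcO,\Wc_2)$, and then comparison of $\rhoe_\tau$ with $\tilde\rhoe_\tau$. The one place where you are actually more careful than the paper is step~(iii): the paper simply asserts that since the H\"older estimate passes to the limit, $\rhoe$ is absolutely continuous, whereas you correctly observe that $1/2$-H\"older alone does not yield this and instead pass through weak $L^2$-compactness of the metric speeds to exhibit an integrable upper gradient. Your closing remark about subsequences versus the full family is also a point the paper leaves implicit.
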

	\begin{proof}
	
	The sequence of curves $(\tilde{\rhoe}_\tau)_{\tau\in\R_+}$, defined from $[0,T]$ to the (compact) space $\mc{P}(\Omega)$ equipped with the Wasserstein distance, is uniformly H\"older continuous.
	Indeed, for any $r,s\in (0,T], s>r$, denote $N_r,N_s$ the two integers such that $r\in(t_{N_r},t_{N_r+1}], s\in(t_{N_s},t_{N_s+1}]$.  By the dynamical formulation of the Wasserstein distance \eqref{eq:w2dynamic}, it holds
	\begin{equation}\label{eq:holder_estim}
		\begin{aligned}
			\Wc_2(\tilde{\rhoe}_{\tau}(s),\tilde{\rhoe}_{\tau}(r)) &\le  |s-r|^{\frac{1}{2}} \left(\int_{r}^{s}\intO \tilde{\rhoe}_{\tau} |\tilde{v}_{\tau}|^2\right)^{\frac{1}{2}} \le |s-r|^{\frac{1}{2}} \left( \sum_{n=N_r}^{N_s} \int_{t_{n}}^{t_{n+1}}\intO \tilde{\rhoe}_{\tau} |\tilde{v}_{\tau}|^2\right)^{\frac{1}{2}} \\
			&= |s-r|^{\frac{1}{2}} \left( \sum_{n=N_r}^{N_s} \frac{1}{\tau}\Wc_2^2(\rho_{n},\rho_{n+1})\right)^{\frac{1}{2}} \le C |s-r|^{\frac{1}{2}}
		\end{aligned}
	\end{equation}
	where in the last inequality we used the estimate \eqref{eq:bound_W}. By the generalized Ascoli-Arzelà theorem, the sequence converges uniformly in $\Wc_2$, up to a subsequence, to a limit curve $\rhoe$. As the inequality \eqref{eq:holder_estim} passes to the limit, $\rhoe$ is also an absolutely continuous curve with respect to the Wasserstein metric.
	Finally, for any $r\in[0,T]$,
	\[
	\Wc_2(\rhoe_{\tau}(r),\tilde{\rhoe}_{\tau}(r)) =	\Wc_2(\tilde{\rhoe}_{\tau}(t_{N_r}),\tilde{\rhoe}_{\tau}(r)) \le \sqrt{\tau} \left( \int_{t_{N_r}}^{t_{N_r+1}}\intO \tilde{\rhoe}_{\tau} |\tilde{v}_{\tau}|^2 \right)^{1/2}\le C \sqrt{\tau} \,,
	\]
	by the same computations.
	Therefore, the piecewise continuous curve $\rhoe_\tau$ converges uniformly with order $\sqrt{\tau}$ to the same limit curve $\rhoe$.
	
	\end{proof}
	
	To characterize the limit curve $\rhoe$ we will rely on the optimality conditions of the minimization problem in \eqref{eq:bdf2metric}, which is equivalent to a single JKO step.
	Consider an absolutely continuous measure $\rho$ and a smooth vector field ${\xi}$ tangent to the boundary of $\Om$. We define $\omega$ as the absolutely continuous curve solution to
	\begin{equation}\label{eq:curve_var}
	\partial_s \omega + \mathrm{div} (\omega {\xi}) = 0\,, \quad \text{in } (-\delta,\delta)\times\Om\,, \quad \omega(0) = \rho\,,
	\end{equation}
	for $\delta>0$.
	The variations of the energy and the Wasserstein distance along curves defined in this way can be computed explicitly as follows.
	
	\begin{lemma}\label{lem:variations_ac}
	Consider two measures $\rho\in \mc{P}(\Omega),\, \nu\in\mc{P}_2(\mathbb{R}^d)$, with $\rho$ absolutely continuous, and denote by $\gamma$ the optimal transport plan from $\rho$ to $\nu$.
	For any ${\xi}\in\C^\infty_c(\Rd;\Rd)$ with ${\xi}\cdot n_{\partial \Omega} = 0$ on $\partial \Om$, let $\omega$ be the curve of measures defined by \eqref{eq:curve_var} with $\omega(0)=\rho$. It holds:
	\begin{equation}\label{eq:var_W2}
		\frac{\d \Wc_2^2(\omega(s),\nu)}{\d s} \Big|_{s=0} = 2 \int_{\Rd\times \Rd} (x-y)\cdot{\xi}({x}) \, \d \gamma(x,y) \,,
	\end{equation}
	\begin{equation}\label{eq:var_energy}
		\frac{\d \Ec(\omega(s))}{\d s} \Big|_{s=0} = -\intO  \mathrm{div} ({\xi} ({x}))  \ed \rho({x}) + \intO \grad V({x}) \cdot {\xi}({x}) \ed \rho({x}) \,.
	\end{equation}
	
	\end{lemma}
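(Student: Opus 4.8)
The plan is to represent the curve $\omega$ explicitly through the flow of $\xi$ and then differentiate each term at $s=0$. Since $\xi\in\C^\infty_c(\Rd;\Rd)$ it is complete, so let $Y\colon\R\times\Rd\to\Rd$ be its flow, $\p_s Y(s,x)=\xi(Y(s,x))$ with $Y(0,\cdot)=\mathrm{Id}$; because $\xi\cdot n_{\p\Om}=0$ on $\p\Om$, each $Y(s,\cdot)$ is a diffeomorphism of $\Rd$ leaving $\overline{\Om}$ invariant, and by uniqueness of distributional solutions of \eqref{eq:curve_var} for smooth velocities we have $\omega(s)=Y(s,\cdot)_\#\rho$. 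The only expansions needed are $\p_s Y(0,x)=\xi(x)$ and, by Jacobi's formula together with $D_xY(0,\cdot)=\mathrm{Id}$ and $\det D_xY(0,\cdot)\equiv 1$, the identity $\p_s\det D_xY(0,x)=\mathrm{div}\,\xi(x)$.

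For \eqref{eq:var_W2} I would prove two matching one-sided estimates. The upper envelope is immediate: $(Y(s,\cdot)\times\mathrm{Id})_\#\gamma\in\Pi(\omega(s),\nu)$ yields $\Wc_2^2(\omega(s),\nu)\le g(s):=\int|Y(s,x)-y|^2\,\d\gamma(x,y)$, with equality at $s=0$ by optimality of $\gamma$; differentiation under the integral sign is legitimate (the difference quotients are dominated by a multiple of $(1+|y|)$ supported in $x$ near $\mathrm{supp}\,\xi$, and $\nu\in\mc{P}_2(\Rd)$), so $g'(0)=2\int(x-y)\cdot\xi(x)\,\d\gamma(x,y)$, and from $f(s):=\Wc_2^2(\omega(s),\nu)\le g(s)$ with $f(0)=g(0)$ the right difference quotients of $f$ at $0$ have $\limsup\le g'(0)$ and the left ones $\liminf\ge g'(0)$. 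For the reverse bounds I would take an optimal plan $\gamma_s\in\Pi(\omega(s),\nu)$ and push it forward by $Y(-s,\cdot)\times\mathrm{Id}$ to a competitor in $\Pi(\rho,\nu)$, then pass to the limit in the difference quotients $s^{-1}\big(|x'-y|^2-|Y(-s,x')-y|^2\big)$ integrated against $\gamma_s$; this uses $\gamma_s\rightharpoonup\gamma$ as $s\to0$, which holds because $(\gamma_s)$ is tight (its marginals $\omega(s)\to\rho$ and $\nu$ are), every weak-$*$ cluster point is an optimal plan from $\rho$ to $\nu$ by stability of optimality, and such a plan is unique since $\rho$ is absolutely continuous; a uniform-integrability bound in $y$ (again via $\nu\in\mc{P}_2(\Rd)$, the compact support of $\xi$, and all $\gamma_s$ having second marginal $\nu$) lets one pass to the limit. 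The four Dini derivatives then agree and equal $g'(0)$, which is \eqref{eq:var_W2}.

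For \eqref{eq:var_energy} I would split $\Ec=\Uc+\intO(\cdot)\,V$. For the potential term, $\intO V\,\d\omega(s)=\intO V(Y(s,x))\,\d\rho(x)$, the difference quotients $\big(V(Y(s,x))-V(x)\big)/s$ are uniformly bounded because $V$ is Lipschitz, and, since $\rho\ll\d x$ and $V$ is differentiable Lebesgue-a.e., one has $\p_s V(Y(s,x))|_{s=0}=\grad V(x)\cdot\xi(x)$ for $\rho$-a.e.\ $x$, so dominated convergence gives $\frac{\d}{\d s}\intO V\,\d\omega(s)|_{s=0}=\intO\grad V\cdot\xi\,\d\rho$. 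For the entropy, the change of variables $y=Y(s,x)$ (valid since $Y(s,\cdot)$ is a diffeomorphism preserving $\overline{\Om}$), combined with the identity $\rho_s(Y(s,x))\det D_xY(s,x)=\rho(x)$ for the density $\rho_s$ of $\omega(s)$, gives
\[
\Uc(\omega(s))=\intO\rho(x)\log\!\Big(\frac{\rho(x)}{\det D_xY(s,x)}\Big)\,\d x=\Uc(\rho)-\intO\log\det D_xY(s,x)\,\d\rho(x);
\]
the integrand is smooth in $s$, vanishes off $\mathrm{supp}\,\xi$ and is dominated there, so differentiating under the integral at $s=0$ and using $\p_s\log\det D_xY(0,x)=\mathrm{div}\,\xi(x)$ gives $\frac{\d}{\d s}\Uc(\omega(s))|_{s=0}=-\intO\mathrm{div}\,\xi\,\d\rho$. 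Summing the two contributions yields \eqref{eq:var_energy}.

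The main obstacle is the lower-bound half of \eqref{eq:var_W2}: the inequality $f\le g$ coming from the fixed plan $\gamma$ is free, but the matching bound forces one to work with the varying optimal plans $\gamma_s$, to establish their weak-$*$ convergence to $\gamma$ — precisely where absolute continuity of $\rho$ (hence uniqueness of the optimal plan and stability of optimality under weak convergence) enters — and to control the difference quotients uniformly in the unbounded variable $y$. The entropy computation is comparatively routine, its only delicate points being the change-of-variables formula for the pushforward density and the interchange of derivative and integral, both handled by the smoothness and compact support of $\xi$.
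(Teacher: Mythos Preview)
Your proposal is correct. The paper does not actually prove the lemma but simply cites \cite[Corollary~10.2.7]{ambrosio2008gradient} for \eqref{eq:var_W2} and \cite[Theorem~5.30]{villani2003topics} for \eqref{eq:var_energy}; the arguments you sketch --- the upper/lower Dini-derivative sandwich for $W_2^2$ using the fixed plan $\gamma$ on one side and the varying optimal plans $\gamma_s$ together with stability and uniqueness on the other, and the change-of-variables computation for the entropy --- are precisely the standard proofs found in those references, so your approach coincides with what the paper defers to.
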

	\begin{proof}
	See \cite[Corollary 10.2.7]{ambrosio2008gradient} and \cite[Theorem 5.30]{villani2003topics}.
	\end{proof}
	
	We are now ready to prove Theorem \ref{th:convergencefp} which states the convergence of the sequence of curves $(\rhoe_\tau)_{\tau}$ towards a distributional solution of equation \eqref{eq:FokkerPlanckLJKO2}. Specifically, we need to prove that, for all $\varphi\in\C^{\infty}_c([0,T)\times\Rd)$ such that $\grad\varphi\cdot n_{\partial \Omega}=0$ on $\p\Om$, the limit curve $\rhoe$ satisfies:
	\begin{equation}\label{eq:FokkerPlanck_dist}
	-\int_0^T\intO \p_t \varphi \rhoe-\intO \varphi(0)\rhoe(0) 	-\int_0^T \intO \Delta \varphi \rhoe + \int_0^T\intO \grad V \cdot \grad \varphi \rhoe = 0 \,.
	\end{equation}
	
	\begin{proof}[Proof of Theorem \ref{th:convergencefp}]
	Let us define for all $\rho \in \mc{P}(\Omega)$,
	\begin{equation}
		\Gc(\rho_{n-1},\rho_{n-2};\rho) \coloneqq \frac{ \Wc_2^2(\rho,\rhoab_{n-1}) }{2(1-\beta)\tau}+ \Ec(\rho) \,,
	\end{equation}
	which is minimized by $\rho_n$, by the definition of the scheme \eqref{eq:bdf2metric}. Consider a smooth function $\varphi\in\C^{\infty}_c([0,T)\times\Rd)$ such that $\grad\varphi\cdot n_{\partial \Omega}=0$ on $\p\Om$. We define the sequence $(\varphi_{n})_n\subset\C_c^{\infty}(\Rd)$ as $\varphi_{n}=\varphi(t_{n},\cdot)$.
	Consider then a curve $\omega$ defined as in \eqref{eq:curve_var} with $\omega(0)=\rho_{n}$ and ${\xi}= \grad \varphi_{n-2}$.
	Denoting by ${\gamma}_{n}$ the optimal transport plan from $\rho_{n}$ to $\rhoab_{n-1}$, and using \eqref{eq:var_W2}-\eqref{eq:var_energy} as well as the optimality of $\rho_n$, we obtain
	\begin{equation}\label{eq:BDF2_mod_OC}
		\begin{multlined}
			\frac{\d \Gc(\rho_{n-1},\rho_{n-2}; \omega(s))}{\d s}\Big|_{s=0} = \frac{1}{(1-\beta)\tau} \int_{\Rd \times \Rd} (x-x_\alpha)\cdot \grad \varphi_{n-2}({x}) \d {\gamma}_{n} (x,x_\alpha) \\ -\intO  \Delta \varphi_{n-2}({x}) \ed \rho_{n}({x})  + \intO \grad V({x}) \cdot \grad \varphi_{n-2} ({x}) \ed \rho_{n}({x}) = 0 \,.
		\end{multlined}
	\end{equation}
	Thanks to Proposition \ref{prop:conv_rho} and the regularity of $\varphi$, we immediately have
	\begin{equation*}
		\Bigg|
		\sum_{n=2}^{N} \tau \left(-\intO \Delta \varphi_{n-2} \rho_{n} + \intO \grad V \cdot \grad \varphi_{n-2} \rho_{n} \right)
		-\bigg( -\int_0^T \intO \Delta \varphi \rhoe + \int_0^T\intO \grad V \cdot \grad \varphi \rhoe \bigg)
		\Bigg| \longrightarrow 0 \,,
	\end{equation*}
	for $\tau\rightarrow0$. In order to prove that the measure $\rhoe$ is a distributional solution of equation \eqref{eq:FokkerPlanckLJKO2} we need to show that
	\begin{equation*}
		I_1 \coloneqq \Bigg|
		\sum_{n=2}^{N} \frac{1}{1-\beta} \int_{\Rd \times \Rd} (x-x_\alpha)\cdot \grad \varphi_{n-2}({x}) \d \gamma_{n} (x,x_\alpha) \,
		-\left(-\int_0^T\intO \p_t \varphi \rhoe-\intO \varphi(0)\rhoe(0) \right)
		\Bigg| \longrightarrow 0 \,,
	\end{equation*}
	as well.
	We can bound the latter quantity as $I_1\le I_2+I_3$, where $I_2 = \sum_{n=2}^{N} I_2^n $ with
	\[
	I_2^n \coloneqq \Bigg|
	\frac{1}{1-\beta} \int_{\Rd \times \Rd} (x-x_\alpha)\cdot \nabla\varphi_{n-2}({x}) \d \gamma_{n}(x,x_\alpha) 
	-\frac{1}{1-\beta} \int_{\Rd} (\rho_{n}-\alpha\rho_{n-1}+\beta\rho_{n-2}) \varphi_{n-2}\Bigg| \,,
	\]
	and
	\begin{equation*}
		I_3 \coloneqq \left| \sum_{n=2}^{N} \frac{1}{1-\beta} \int_{\Rd} (\rho_{n}-\alpha\rho_{n-1}+\beta\rho_{n-2}) \varphi_{n-2}  -
		\left(-\int_0^T\intO \p_t \varphi \rhoe-\intO \varphi(0)\rhoe(0) \right)
		\right| \,.
	\end{equation*}
	Integrating by parts the discrete derivative in this last term,
	\begin{multline*}
		\sum_{n=2}^{N} \frac{1}{1-\beta}\int_{\Rd} (\rho_{n}-\alpha\rho_{n-1}+\beta\rho_{n-2}) \varphi_{n-2} = \\
		\begin{aligned}
			&=\sum_{n=2}^{N} \frac{1}{1-\beta} \int_{\Rd} (\varphi_{n-2}-(\alpha\varphi_{n-1}-\beta\varphi_{n})) \rho_{n} + \frac{1}{1-\beta} \int_{\Rd} \beta \varphi_0\rho_0  +(\beta\varphi_1-\alpha\varphi_0)\rho_1\,.
		\end{aligned}
	\end{multline*}
	Then, since $\alpha = 1+\beta$, and thanks to the smoothness of the function $\varphi$ and Proposition \ref{prop:conv_rho}, we obtain $I_3\le C\tau$ for some constant $C$ independent of $\tau$.

	Let us focus then on the term $I_2$. Adding and subtracting $(1-\beta)^{-1}\int_{\Rd}(\rho_{n}-\rhoab_{n-1})\varphi_{n-2}$ at each step $n$, we obtain
	\begin{align}\label{eq:consistencyterm}
		I_2^n
		&\le
		\begin{aligned}[t]
			&\frac{1}{1-\beta} \left| \int_{\Rd\times\Rd} (x-x_\alpha)\cdot\nabla\varphi_{n-2}({x}) \d \gamma_{n}(x,x_\alpha) - \int_{\Rd} (\rho_{n}-\rhoab_{n-1})\varphi_{n-2} \right| \\
			&+\frac{1}{1-\beta} \left| \int_{\Rd} (\alpha \rho_{n-1}-\beta \rho_{n-2}-\rhoab_{n-1}) \varphi_{n-2} \right| \eqqcolon\frac{1}{1-\beta}( I_4^n+ I_5^n) \,.
		\end{aligned}  
	\end{align}
	Rewriting 
	\[
	\int_{\Rd} (\rho_{n}-\rhoab_{n-1})\varphi_{n-2} = \int_{\Rd\times\Rd} (\varphi_{n-2}({x})-\varphi_{n-2}(x_\alpha)) \d {\gamma}_{n}(x,x_\alpha) \,,
	\]
	we can bound $I_4^n$ as
	\[
	\begin{aligned}
		I_4^n &=\left| \int_{\Rd\times\Rd}  \varphi_{n-2}({x}) - \varphi_{n-2}(x_\alpha) - (x-x_\alpha)\cdot \nabla\varphi_{n-2}({x}) \d {\gamma}_{n}(x,x_\alpha) \right| \\
		&\le \frac{1}{2} ||\text{Hess}(\varphi_{n-2})||_{\infty} \left( \int_{\Rd\times\Rd} |x-x_\alpha|^2 \d {\gamma}_{n}(x,x_\alpha) \right) \\
		&=  \frac{1}{2} ||\text{Hess}(\varphi_{n-2})||_{\infty} \Wc_2^2(\rho_{n},\rhoab_{n-1}) \\
		&\le ||\text{Hess}(\varphi_{n-2})||_{\infty} \left(\Wc_2^2(\rho_{n},\rho_{n-1})+\Wc_2^2(\rho_{n-1},\rhoab_{n-1})\right) \\
		&\le ||\text{Hess}(\varphi_{n-2})||_{\infty} \Big(\Wc_2^2(\rho_{n},\rho_{n-1})+\theta^2 \Wc_2^2(\rho_{n-1},\rho_{n-2})\Big) \,,
	\end{aligned}
	\]
	where we used the dissipation estimate \eqref{eq:dissipation}. Similarly by the consistency assumption \eqref{eq:consistency} on the extrapolation, there exists a constant $C_\varphi$ only depending on $\varphi$ and $\Omega$ such that
	\[
	I_5^n \leq C_\varphi W^2_2(\rho_{n-1},\rho_{n-2})\,.
	\]
	Using the bound \eqref{eq:bound_W}, the estimates above imply that there exists a constant $C>0$ such that $I_2 \leq C \tau$.
	The whole term $I_1$ is therefore converging to zero and $\rhoe$ satisfies equation \eqref{eq:FokkerPlanck_dist}.
	
	\end{proof}
	
	\begin{remark}\label{rem:roleassumptions} 
		The $\theta$-dissipativity and consistency assumptions play different roles in our proof of convergence. One the one hand, $\theta$-dissipativity is essentially used to get a stable scheme (Lemma \ref{lem:almostED}) and obtain compactness (Lemma \ref{lem:bound_W}). On the other hand, the consistency assumption is necessary to obtain a consistent discretization  of the time derivative (appearing in $I_5^n$ in \eqref{eq:consistencyterm}) and recover the correct PDE in the limit.
	\end{remark}

	\section{Extrapolation in Wasserstein space}\label{sec:extrapolation}

	In this section we consider the issue of defining geodesic extrapolations in the Wasserstein space. In particular, we propose several notions of extrapolation operators $\sf{E}_\alpha$, which in some cases verify the assumptions of Theorem \ref{th:convergencefp}, and discuss their relationship.
	We consider the extrapolation problem on the whole space $\mc{P}_2(\mathbb{R}^d)$. This allows us to be more general and to simplify the exposition, in particular avoiding issues with the boundary.
	On the other hand, some of the proposed definitions may be adapted so that the extrapolation of two measures in $\mc{P}(\Omega)$ stays in $\mc{P}(\Omega)$ (see Remark \ref{rem:omega}).
	We stress that this last property is not required in our definition of the EVBDF2 scheme \eqref{eq:bdf2metric}, but it can be useful to produce a fully-discrete scheme (see Section \ref{sec:discreteextra}) or an intrinsic formulation. See Section \ref{sec:boundeddom} for more considerations on this issue.
	
	As recalled in the introduction, a globally-minimizing geodesic with respect to the $W_2$ metric is a curve $\omega: [t_0,t_1] \rightarrow \mc{P}_2(\mathbb{R}^d)$ such that 
	\begin{equation}\label{eq:geodesicglob}
	W_2(\omega(s_0),\omega(s_1)) = \frac{|s_1-s_0|}{|t_1-t_0|} W_2(\omega(t_0),\omega(t_1))\,,
	\end{equation}
	for all $s_0,s_1 \in (t_0,t_1)$. We say that $\omega: [t_0,t_1] \rightarrow \mc{P}_2(\mathbb{R}^d)$ is a locally-minimizing geodesic if for all $t\in(t_0,t_1)$ there exists an open interval $J \ni t$ such that \eqref{eq:geodesicglob} holds for all $s_0,s_1 \in J \cap (t_0,t_1)$.
	From the discussion in Section \ref{sec:prem}, given two measures $\mu_0, \mu_1 \in \mc{P}_2(\mathbb{R}^d)$, if $\mu_0$ is absolutely continuous there exists a unique globally length-minimizing geodesic connecting the two, which is given by
	\begin{equation}\label{eq:brenier2}
	\omega(t) = ((1-t) \mathrm{Id} + t \nabla u)_\# \mu_0
	\end{equation}
	for $t\in[0,1]$, where $u$ is a uniquely defined convex function $\mu_0$-a.e.\ (up to an additive constant). As a matter of fact, we have for all $s_0,s_1 \in (0,1)$,
	\begin{equation}\label{eq:geoineq}
	\begin{aligned}
		W^2_2(\omega(s_0),\omega(s_1))& \leq \int_{\mathbb{R}^d} |(1-s_0) x + s_0 \nabla u(x) - (1-s_1) x - s_1 \nabla u(x)|^2 \ed \mu_0(x)\\ &=|s_1 - s_0|^2 W_2^2(\mu_0,\mu_1),
	\end{aligned}
	\end{equation}
	where for the first inequality we used as competitor the plan $((1-s_0) \mathrm{Id} + s_0 \nabla u,  (1-s_1) \mathrm{Id} + s_1 \nabla u)_\# \mu_0$, and for the second equality the optimality of the plan $(\mathrm{Id},\nabla u)_\#\mu_0$ for the transport problem from $\mu_0$ to $\mu_1$. On the other hand, for $s_1>s_0$, by the triangular inequality and \eqref{eq:geoineq}
	\[
	\begin{aligned}
	W_2(\mu_0,\mu_1)& \leq W_2(\mu_0,\omega(s_0)) + W_2(\omega(s_0),\omega(s_1)) +W_2(\omega(s_1),\mu_1) \\
	& \leq (s_0 + 1 - s_1) W_2(\mu_0,\mu_1) + W_2(\omega(s_0),\omega(s_1))\,,
	\end{aligned}
	\]
	and therefore the inequality in \eqref{eq:geoineq} is an equality.
	Moreover, by similar calculations one can verify that for any $\alpha\geq 1$ the curve $t\in[0,\alpha] \mapsto \omega(t)$, still defined as in \eqref{eq:brenier2}, is a globally length-minimizing geodesic if and only if $u$ is $\beta/\alpha$-convex, i.e.\ the function
	\begin{equation}\label{eq:convexityextra}
	x \mapsto \alpha u(x) - \beta \frac{|x|^2}{2}  \quad \text{is convex},
	\end{equation}
	with $\beta =\alpha -1$. 
	However, in general, there is no guarantee that $u$ is strongly-convex even if $\mu_0$ and $\mu_1$ have smooth and strictly positive densities and for arbitrarily small $\beta$, as shown by the following example.
	
	\begin{example}[Contraction flow]
	Take $u = \frac{\beta}{2\alpha}|\cdot|^2$, for $\alpha >1$ and $\beta = \alpha-1$. Then, for any absolutely continuous $\mu_0 \in \mc{P}_2(\mathbb{R}^2)$ and $\mu_1 = (\nabla u)_\# \mu_0$, there exists a unique globally length-minimizing geodesic on $(-\infty,\alpha]$ such that $\omega(0) = \mu_0$ and $\omega(1)=\mu_1$, which is given by \eqref{eq:brenier2}. On the other hand, since all trajectories cross at time $\alpha$ (i.e.\ $(1-\alpha) \mathrm{Id} + \alpha \nabla u = 0$), there exists no geodesic on $(-\infty,\alpha']$ (either local or global) with $\alpha'>\alpha$ satisfying the same property.  
	\end{example}
	
	In general, globally length-minimizing geodesic extensions may not exist even if particle trajectories do not cross. In this case, however, locally length-minimizing extensions may still exist as shown in the next example.
	
	\begin{example}[Shear flow]\label{ex:shear} For $d=2$, let
	\[
	\mu_0 = \frac{1}{2}(\delta_{z} + \delta_{-z}) \,, \quad \mu_1 =  \frac{1}{2}(\delta_{z -v} + \delta_{-z+v})
	\]
	where $z = (1,1)$ and  $v=(1,0)$. In this case, there exists a unique geodesic $\omega:\mathbb{R}\rightarrow \mathcal{P}_2(\mathbb{R}^2)$ which is locally length-minimizing, and such that  $\omega(0) = \mu_0$ and $\omega(1)= \mu_1$, which is given by
	\begin{equation}\label{eq:shear}
		\omega(t) =  \frac{1}{2}(\delta_{z - tv} + \delta_{-z+tv}).
	\end{equation}
	However, $\omega$ is globally length-minimizing only when restricted on $(-\infty,2]$.
	\end{example}
	
	In order to define our scheme, we need an extrapolation operator which is well-defined even when the geodesic extension (either globally or locally length-minimizing) does not exist. In the following we will introduce different possible definitions and describe their properties.
	
	\subsection{Free-flow extrapolations} One possible strategy for defining an extrapolation consists in disregarding the convexity condition on the Brenier potential in \eqref{eq:convexityextra}, and allowing particles to cross each other while keeping their straight trajectories at constant speed. If $\mu_0\in \mc{P}_2(\mathbb{R}^d)$ is absolutely continuous, this amounts to defining, for any $\mu_1 \in \mc{P}_2(\mathbb{R}^d)$ and $\alpha>1$,
	\begin{equation}\label{eq:lagextra}
	{\sf E}_\alpha(\mu_0,\mu_1) = ((1-\alpha) \mathrm{Id} + \alpha \nabla u)_\# \mu_0 \,,
	\end{equation}
	where $u$ is a Brenier potential from $\mu_0$ to $\mu_1$ (uniquely defined $\mu_0$-a.e.). If $\mu_0$ is not absolutely continuous, there may exist multiple geodesics and optimal transport plans from $\mu_0$ to $\mu_1$. In general, we say that an extrapolation operator $\sf{E}_\alpha$ yields a \emph{free-flow extrapolation} if, denoting by $\Gamma(\mu_0,\mu_1)$ the set of optimal plans from $\mu_0$ to $\mu_1$, one has:
	\begin{equation}\label{eq:freeflow}
	\forall \mu_0,\mu_1 \in \mc{P}_2(\mathbb{R}^d)\,, ~ \exists\, \gamma^* \in \Gamma(\mu_0,\mu_1)~ :~ {\sf E}_\alpha(\mu_0,\mu_1) =(\pi_{\alpha})_\# \gamma^* , 
	\end{equation}
	where $\pi_\alpha: \mathbb{R}^d\times \mathbb{R}^d \rightarrow \mathbb{R}^d$ is the map defined by $\pi_\alpha(x,y) = x+ \alpha (y-x)$.
	By construction, when the geodesic induced by $\gamma^*$ in \eqref{eq:freeflow} admits a locally (or globally) length-minimizing geodesic extension, the resulting free-flow extrapolation is always consistent with it (for example, free-flow extrapolations yield the curve \eqref{eq:shear} in the case of Example \ref{ex:shear}). Furthermore, such extrapolation operators are admissible for our scheme in the sense of Theorem \ref{th:convergencefp}, as shown by the following proposition.
	
	\begin{proposition} \label{prop:lagextra} Any free-flow extrapolation operator ${\sf E}_\alpha: \mc{P}_2(\mathbb{R}^d)\times \mc{P}_2(\mathbb{R}^d) \rightarrow \mc{P}_2(\mathbb{R}^d)$, i.e.\ any map satisfying \eqref{eq:freeflow}, is $\beta$-dissipative with $\beta = \alpha -1$, and in addition it verifies the consistency assumption \eqref{eq:consistency} for all $\varphi\in \C^\infty_c(\mathbb{R}^d)$.
	\end{proposition}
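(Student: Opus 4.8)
The plan is to deduce both properties from a single construction. Fix $\mu_0,\mu_1\in\mc{P}_2(\mathbb{R}^d)$ and let $\gamma^*\in\Gamma(\mu_0,\mu_1)$ be an optimal plan for which ${\sf E}_\alpha(\mu_0,\mu_1)=(\pi_\alpha)_\#\gamma^*$, which exists by \eqref{eq:freeflow}. A preliminary observation is that ${\sf E}_\alpha(\mu_0,\mu_1)$ does have finite second moment, since $\gamma^*$ does and $\pi_\alpha$ is affine, so the operator is indeed well defined into $\mc{P}_2(\mathbb{R}^d)$. Throughout I would use the two elementary identities $\pi_\alpha(x,y)-y=(1-\alpha)(x-y)=-\beta(x-y)$ and $\alpha-\beta=1$.

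For $\beta$-dissipativity, I would use the push-forward $\eta\coloneqq(e_1,e_2)_\#\gamma^*$ with $e_1(x,y)=y$ and $e_2(x,y)=\pi_\alpha(x,y)$: its first marginal is the second marginal of $\gamma^*$, namely $\mu_1$, and its second marginal is ${\sf E}_\alpha(\mu_0,\mu_1)$, so $\eta\in\Pi(\mu_1,{\sf E}_\alpha(\mu_0,\mu_1))$. Using $\eta$ as a competitor in \eqref{eq:w2} together with the first identity above,
\[
W_2^2(\mu_1,{\sf E}_\alpha(\mu_0,\mu_1))\le\int|y-\pi_\alpha(x,y)|^2\,\ed\gamma^*=\beta^2\int|x-y|^2\,\ed\gamma^*=\beta^2 W_2^2(\mu_0,\mu_1),
\]
where the last equality is the optimality of $\gamma^*$. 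Taking square roots gives \eqref{eq:dissipation} with $\theta=\beta$.

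For the consistency estimate, I would rewrite each of the three integrals in \eqref{eq:consistency} as an integral against $\gamma^*$, using ${\sf E}_\alpha(\mu_0,\mu_1)=(\pi_\alpha)_\#\gamma^*$ and that $\mu_1,\mu_0$ are the marginals of $\gamma^*$, so that
\[
\int_{\Rd}\varphi\,\big({\sf E}_\alpha(\mu_0,\mu_1)-\alpha\mu_1+\beta\mu_0\big)=\int\Big(\varphi(\pi_\alpha(x,y))-\alpha\varphi(y)+\beta\varphi(x)\Big)\ed\gamma^*(x,y).
\]
Then I would rearrange the integrand, using $\alpha-\beta=1$, into $\big[\varphi(\pi_\alpha(x,y))-\varphi(y)\big]-\beta\big[\varphi(y)-\varphi(x)\big]$, and Taylor-expand both brackets to first order around $y$. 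Since $\pi_\alpha(x,y)-y=\beta(y-x)$, the two first-order contributions $\beta\nabla\varphi(y)\cdot(y-x)$ cancel exactly, leaving a remainder pointwise bounded by $\tfrac12\|\mathrm{Hess}\,\varphi\|_\infty(\beta^2+\beta)\,|x-y|^2$. Integrating against $\gamma^*$ and invoking once more its optimality yields \eqref{eq:consistency} with $C_\varphi=\tfrac{\alpha\beta}{2}\|\mathrm{Hess}\,\varphi\|_\infty$, which depends only on $\alpha$ and $\varphi$.

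I do not expect a genuine obstacle here: the argument is a short manipulation of the optimal plan followed by a second-order Taylor estimate, exactly in the spirit of the bound on $I_4^n$ in the proof of Theorem \ref{th:convergencefp}. The only points requiring a little care are (i) checking that ${\sf E}_\alpha(\mu_0,\mu_1)\in\mc{P}_2(\mathbb{R}^d)$ so the statement makes sense, and (ii) observing that the boundary condition $\nabla\varphi\cdot n_{\partial\Omega}=0$ appearing in \eqref{eq:consistency} plays no role at all — the estimate holds for every $\varphi\in\C^\infty_c(\mathbb{R}^d)$, which is why the proposition states it in this stronger form.
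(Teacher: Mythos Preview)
Your argument is correct and follows essentially the same route as the paper's proof: construct a competitor plan from $\gamma^*$ to get $\beta$-dissipativity, then use a second-order Taylor expansion with first-order cancellation for consistency. The only cosmetic differences are that you work directly with the optimal plan $\gamma^*$ (the paper treats the absolutely continuous case via the transport map and then remarks that the general case is analogous), and you expand around the target point $y$ rather than the source point $x$; both choices yield the same constant $C_\varphi=\tfrac{\alpha\beta}{2}\|\mathrm{Hess}\,\varphi\|_\infty$.
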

	\begin{proof} For simplicity, we only consider the case where $\mu_0$ is absolutely continuous. Let $\nabla u$ the optimal transport map from $\mu_0$ to $\mu_1$.
	To prove the dissipativity, let $\bar{\gamma} = (\nabla u, (1-\alpha) \mathrm{Id} + \alpha \nabla u)_\# \mu_0$. Then $\bar{\gamma} \in \Pi(\mu_1, {\sf E}_\alpha(\mu_0,\mu_1))$ and by equation \eqref{eq:w2}, 
	\[
	W^2_2(\mu_1, {\sf E}_\alpha(\mu_0,\mu_1)) \leq \int |x-y|^2 \ed \bar{\gamma}(x,y)  = (1-\alpha)^2 \int | \mathrm{Id} - \nabla u|^2 \mu_1 = \beta^2W^2_2(\mu_0,\mu_1)\,.
	\]
	For the consistency, let $\varphi \in \C^\infty_c(\mathbb{R}^d)$ and observe that, by the definition of pushforward,
	\[
	\int \varphi\, ( \extra(\mu_0,\mu_1) - \alpha \mu_1 + \beta \mu_0)  = \int \left[ \varphi\big( (1-\alpha)x + \alpha \nabla u(x)\big) -\alpha \varphi\big(\nabla u(x)\big) +\beta \varphi(x) \right]\ed \mu_0(x) \,.
	\]
	Using the Taylor expansion of $\varphi$ around the point $x$ in the integral on the right-hand side, we find
	\[
	\left|\int \varphi\, ( \extra(\mu_0,\mu_1) - \alpha \mu_1 + \beta \mu_0)\right| \leq \frac{\alpha\beta}{2} \|\mathrm{Hess}(\varphi)\|_\infty W^2_2(\mu_0,\mu_1)\,.
	\]
	In the general case where $\mu_0$ is not absolutely continuous, the proof is analogous replacing transport maps by optimal plans.
	\end{proof}

	\subsection{Extrapolation with collisions}\label{sec:collisions} 
	
	Free-flow extrapolations are the simplest way to extend geodesics after their maximal time of existence, but they are purely Lagrangian and they cannot be easily implemented in an Eulerian setting. Here we describe an alternative route to construct an extrapolation operator which prevents particles to cross, and which is based on viscosity solutions of the Hamilton-Jacobi equation. The resulting operator can be implemented in a robust way, but unfortunately it falls outside the hypotheses of the convergence results presented in this work. In Section \ref{sec:LJKO2scheme}, we will describe a possible implementation (in the case of a compact domain $\Omega$) and verify numerically that it leads to a second-order scheme.
	
	Given $\mu_0, \mu_1 \in\mc{P}_2(\mathbb{R}^d)$, let us suppose that the optimal potential $\phi$ for the transport from $\mu_0$ to a given measure $\mu_1$ on the time interval $[0,1]$, is such that 
	\begin{equation}\label{eq:phi0}
	\phi(0,\cdot) \text{ is globally Lipschitz.}
	\end{equation}
	Then, the curve $\omega:[0,\infty) \rightarrow \mc{P}_2(\mathbb{R}^d)$ satisfying
	\begin{equation}\label{eq:omegahj}
	\omega(t) =  \left[\nabla \mathrm{co} \left ((1-t) \frac{|\cdot|^2}{2} + t  u\right)\right]_\# \mu_0 \,,
	\end{equation}
	where $u = |\cdot|^2/2 + \phi(0,\cdot)$ is a Brenier potential from $\mu_0$ to $\mu_1$, and where $\mr{co}$ denotes the convex hull, is well-defined. We remark that \eqref{eq:omegahj} coincides at time $t=\alpha$ with the free-flow extrapolation \eqref{eq:lagextra} as long as the convexity condition \eqref{eq:convexityextra} holds. On the other hand, if such condition is not verified, taking the convex envelope in \eqref{eq:omegahj} guarantees that the flow stays monotone and particles cannot cross.
	
	If \eqref{eq:phi0} holds, one also has that the Hamilton-Jacobi equation \eqref{eq:hj} with initial condition $\phi(0,\cdot)$ has a unique viscosity solution, which is given by the Hopf-Lax formula
	\begin{equation}\label{eq:hopf}
	\phi(t,\cdot)=\mathcal{H}_t(\phi(0,\cdot)), \quad  \mathcal{H}_t(\phi(0,\cdot))(x) \coloneqq \inf_{y\in\mathbb{R}^d} \frac{|x-y|^2}{2t} + \phi(0,y)\,.
	\end{equation}
	Note that the evolution of the density transported by the velocity field $\nabla \phi(t,\cdot)$ (via the continuity equation) is also well-defined since so is its Lagragian flow \cite{khanin2010particle,bogaevsky2004matter}. In the following lemma we show that equations \eqref{eq:hopf} and \eqref{eq:omegahj} are closely related.

	\begin{lemma} 
	Let $\phi:[0,\infty) \times \mathbb{R}^d\rightarrow \mathbb{R}$ be the unique viscosity solution to the Hamilton-Jacobi equation, or equivalently verifying \eqref{eq:hopf} for $t>0$, with $\phi(0,\cdot)$ being a Lipschitz function, and denote $u \coloneqq \phi(0,\cdot) + \frac{|\cdot|^2}{2}$. Let $\mu_0\in\mathcal{P}_2(\mathbb{R}^d)$ be an absolutely continuous measure and $\omega :[0,\infty) \rightarrow \mathcal{P}_2(\mathbb{R}^d)$ be the curve defined by \eqref{eq:omegahj}
	for all $t \geq 0$. Then,
	\begin{enumerate} 
		\item for all $t \geq 0$,
		$\omega(t)$ solves
		\begin{equation}\label{eq:varextra}
			\min_{\mu\in\mathcal{P}_2(\mathbb{R}^d)} \frac{W^2_2(\mu_0, \mu)}{2t} - \int \phi(t, \cdot) \mu\,;
		\end{equation}
		\item if $d=1$, $\omega$ is  a weak solution to the continuity equation with velocity $\nabla \phi(t,\cdot)$. 
	\end{enumerate}
	\end{lemma}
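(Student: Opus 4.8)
The plan is to treat the two claims separately, exploiting the Hopf--Lax representation of $\phi$ and the convex-hull structure in \eqref{eq:omegahj}.

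For claim (1), the key observation is that the convex function $\mr{co}\bigl((1-t)\tfrac{|\cdot|^2}{2}+tu\bigr)$ is (up to the factor $t$ and the quadratic term) precisely the Brenier potential for transport from $\mu_0$ to $\omega(t)$, and its Legendre transform will encode $-\phi(t,\cdot)$. First I would rewrite the functional in \eqref{eq:varextra} using the dual/Kantorovich formulation of $W_2^2(\mu_0,\mu)$ and the substitution $u(x) = \phi(0,x) + |x|^2/2$; the aim is to show that the Euler--Lagrange condition for a minimizer $\mu$ is exactly that $\mu = (\nabla \psi)_\#\mu_0$ with $\psi$ a convex function whose gradient agrees with $\nabla \mr{co}\bigl((1-t)\tfrac{|\cdot|^2}{2}+tu\bigr)$ $\mu_0$-a.e. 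Concretely, I would use that for fixed $t>0$ the map $x\mapsto \mr{co}\bigl((1-t)\tfrac{|\cdot|^2}{2}+tu\bigr)(x)$ has Legendre transform related to $\mathcal{H}_t(\phi(0,\cdot))$ by the standard Hopf--Lax--Legendre duality: the inf-convolution in \eqref{eq:hopf} is exactly a Legendre-transform computation, and taking convex hulls corresponds to double Legendre transforms. Plugging the candidate $\omega(t)$ into the functional and using \eqref{eq:formulationHJ}-type identities (i.e.\ $W_2^2(\mu_0,\omega(t))/(2t) = \int\phi(t,\cdot)\omega(t) - \int\phi(0,\cdot)\mu_0$, adapted since $\phi$ may not be an optimal potential in the strict sense once convexity fails) should show the functional value equals $-\int\phi(0,\cdot)\mu_0$, a constant, while convexity of $\mu\mapsto W_2^2(\mu_0,\mu)$ together with linearity of $\mu\mapsto\int\phi(t,\cdot)\mu$ gives that any minimizer must be of this push-forward form; uniqueness of the minimizer (when $\mu_0$ is absolutely continuous) then pins it down to $\omega(t)$. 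The technical heart here is justifying the duality identity without assuming $\phi(t,\cdot)$ is $(-1/t)$-convex: one needs that $\mathcal{H}_t(\phi(0,\cdot))$ paired against the push-forward by the (monotone) gradient of the convex hull still satisfies the Kantorovich optimality relations, which follows because the convex hull operation does not change the Legendre transform.

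For claim (2), in dimension one the convex hull of a function is governed by the "Maxwell / equal-area" rule, and the gradient $\nabla\mr{co}(\cdots)$ is the monotone rearrangement obtained by flattening the non-monotone parts of $(1-t)x + tu'(x)$. I would argue that $\omega(t) = [\partial_x\mr{co}((1-t)|\cdot|^2/2 + tu)]_\#\mu_0$ coincides with the entropy (Oleinik-admissible) solution of the scalar conservation law / Burgers-type dynamics associated to $\partial_t\phi + |\partial_x\phi|^2/2 = 0$, invoking the classical theory of sticky particles and the Lagrangian flow of $\nabla\phi(t,\cdot)$ (the cited works \cite{khanin2010particle,bogaevsky2004matter}). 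The concrete step is: differentiate $t\mapsto \int\varphi\,\omega(t)$ for a test function $\varphi$, push the derivative inside using the explicit form of the optimal map, and recognize the result as $-\int \partial_x\bigl(\omega(t)\partial_x\phi(t,\cdot)\bigr)\varphi$ after using the Hopf--Lax formula for $\partial_x\phi(t,\cdot)$ at points of differentiability; the set where differentiability fails is $\omega(t)$-negligible precisely because the convex hull pushes no mass onto the "shock" points in one dimension (mass accumulates there only as atoms, which is compatible with the continuity equation in distributional form).

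The main obstacle I expect is claim (1): making the variational characterization rigorous requires carefully matching the convex-hull construction with the Hopf--Lax semigroup through Legendre duality, and in particular showing that $\omega(t)$ is a \emph{minimizer} and not merely a critical point — since, as the paper itself stresses, problems of this flavor (cf.\ \eqref{eq:metricextra}) are not convex in the usual sense. The saving grace is that here the "potential" term $\int\phi(t,\cdot)\mu$ is \emph{linear} in $\mu$ (unlike the $-\beta W_2^2$ term in \eqref{eq:metricextra}), so $\mu\mapsto W_2^2(\mu_0,\mu)/(2t) - \int\phi(t,\cdot)\mu$ is genuinely convex along linear interpolations; hence existence and uniqueness of the minimizer are not the issue, and the whole difficulty reduces to verifying that the explicit candidate $\omega(t)$ satisfies the first-order optimality condition, i.e.\ that $x \mapsto \partial_x\mr{co}((1-t)|\cdot|^2/2+tu)(x)$ is the optimal transport map from $\mu_0$ to $\omega(t)$ and that $\phi(t,\cdot)$ plays the role of the corresponding Kantorovich potential at the endpoint. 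Claim (2) should then be comparatively routine given the one-dimensional structure and the cited sticky-particle results.
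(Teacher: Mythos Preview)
Your proposal is essentially correct and hinges on the same core idea as the paper --- the Hopf--Lax/Legendre identity
\[
\frac{|y|^2}{2} - t\,\mathcal{H}_t(\psi)(y) \;=\; \Bigl(\tfrac{|\cdot|^2}{2} + t\psi\Bigr)^{\!*}(y),
\]
together with the observation that the biconjugate is the convex hull. The difference is one of execution. For claim~(1), the paper is much more direct: rather than plugging the candidate $\omega(t)$ into the functional and arguing it achieves the minimum, it invokes the known optimality conditions for problems of the form \eqref{eq:varextra} (citing \cite[Example~7.21]{santambrogio2015optimal}) to read off that the Brenier potential from $\mu_0$ to the minimizer is $\tfrac{|\cdot|^2}{2} - t\,\mathcal{H}_t(-\phi(t,\cdot)) = \tfrac{|\cdot|^2}{2} - t\,\mathcal{H}_t(-\mathcal{H}_t(\phi(0,\cdot)))$; two applications of the identity above then give the biconjugate of $(1-t)\tfrac{|\cdot|^2}{2} + tu$, which is exactly the convex hull appearing in \eqref{eq:omegahj}. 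This sidesteps your concern about ``minimizer versus critical point'' entirely, since the cited result already characterizes the minimizer. Your route via computing the functional value at the candidate would also work but is more laborious and requires the extra justification you flag (that $\phi(t,\cdot)$ acts as a Kantorovich potential despite possible loss of semiconvexity).

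For claim~(2), the paper simply cites \cite[Proposition~4.1]{ben2003system}, which gives an explicit formula for the density transported by the viscosity-solution velocity field in one dimension. Your proposed direct differentiation and appeal to sticky-particle/Oleinik theory is a valid alternative, and arguably more self-contained, but considerably more work than the one-line citation.
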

	
	\begin{proof}
	Concerning the first point, by the optimality conditions of problem \eqref{eq:varextra} \cite[Example 7.21]{santambrogio2015optimal} one can verify that:
	\[
	\frac{W_2^2(\mu_0,\mu)}{2{t}} = \int \phi(t,\cdot) \mu -\int \mathcal{H}_t(-\phi(t,\cdot))  \mu_0\,.
	\]
	Therefore, the optimal transport map from $\mu_0$ to the optimal measure $\mu$ is the gradient of $\frac{|\cdot|^2}{2}-t\mathcal{H}_t(-\phi(t,\cdot))=\frac{|\cdot|^2}{2}-t\mathcal{H}_t(-\mathcal{H}_t(\phi(0,\cdot)))$.
	Noting that for any function $\psi$ it holds
	\begin{equation}\label{eq:hopf-leg}
		\begin{aligned}
			\frac{|y|^2}{2}-t\mathcal{H}_t(\psi)(y)
			&=\frac{|y|^2}{2}-\inf_x \frac{|x-y|^2}{2} +t \psi(x) \\
			&=\sup_x y \cdot x -\left(\frac{|x|^2}{2}+t\psi(x)\right)=\left(\frac{|\cdot|^2}{2}+t\psi(\cdot)\right)^*(y), \quad \forall y\,,
		\end{aligned}
	\end{equation}
	we conclude by applying twice \eqref{eq:hopf-leg}.
	
	For the second part, we refer to Proposition 4.1 in \cite{ben2003system}, where an explicit expression for the measure transported by the flow is provided.
	\end{proof}
	
	\begin{remark}\label{rem:viscosity+continuity}
	For $d>1$, the curve \eqref{eq:omegahj} does not coincide in general with the solution of the continuity equation with velocity $\nabla \phi(t,\cdot)$. This is because \eqref{eq:omegahj} completely disregards the dynamics of mass within the shocks, which may be non-trivial \cite{ben2003system,bogaevsky2004matter}. 
	\end{remark}
	
	There are two main problems with using \eqref{eq:omegahj} to define an extrapolation operator, i.e.\ setting $\extra(\mu_0,\mu_1)=\omega(\alpha)$. First, the initial potential $\phi(0,\cdot)$ is uniquely defined only $\mu_0$-a.e., however the value of the potential outside the support of $\mu_0$ does affect the final measure $\omega(\alpha)$ for $\alpha>1$. Second, because of the same reason one can easily construct solutions that are not dissipative in the sense of Definition \ref{def:dissipation}: for example, one can take $\mu_0 =\mu_1$ with compact support and select an initial potential outside the support in such a way that $\omega(\alpha)$ (defined as in the previous lemma) is different from $\mu_1$. 
	
	\begin{remark}[Extrapolation via pressureless fluids]\label{rem:pressureless}
	With the same notation as above, one could construct geodesic continuations also by looking for solutions $\omega:[0,\infty)\rightarrow \mc{P}_2(\mathbb{R}^d)$, $v:[0,\infty)\rightarrow L^2(\omega(t);\mathbb{R}^d)$, of the following system of PDEs:
	\begin{equation}\label{eq:pressureless}\left\{
		\begin{array}{l}
			\partial_t \omega + \mathrm{div}(\omega v) = 0\,,\\
			\displaystyle \partial_t (\omega  v) + \mathrm{div}\left( \omega  {v\otimes v}\right) =  0\,,\\
		\end{array}\right.
	\end{equation}
	with initial conditions given by
	\[
	\omega (0) = \mu_0\,, \quad v(0,\cdot) = \nabla \phi(0,\cdot)\,.
	\]
	System \eqref{eq:pressureless} describes the evolution of a pressureless fluid with given initial density and velocity. In fact, any sufficiently regular solution $(\omega,v)$ of problem \eqref{eq:w2dynamic} on the time interval $[0,1]$ also solves \eqref{eq:pressureless}, since the absence of shocks implies that the Hamilton-Jacobi equation is equivalent to the conservation of momentum, i.e.\ the second equation in \eqref{eq:pressureless}. Moreover, dissipative solutions to such system, i.e.\ for which the kinetic energy $\mc{K}:[0,\infty) \rightarrow \mathbb{R}_+$ given by
	\[
	\mc{K}(t)\coloneqq \int \omega(t)|v(t)|^2
	\]
	is nonincreasing,
	provide a dissipative notion of extrapolation, since by equation  \eqref{eq:w2dynamic}, for any $\alpha =1+\beta>1$
	\[
	W^2_2(\mu_1,\omega(\alpha)) \leq \beta \int_1^\alpha \ed t \int \omega(t) |u(t)|^2  \leq  \beta^2  \int_0^1 \ed t \int \omega(t) |u(t)|^2  = \beta^2 W^2_2(\mu_0,\mu_1)\,.
	\]
	Such solutions can be constructed by requiring a \emph{sticky collision} condition, which enforces particles to share the same position after their collision. 
	In dimension higher than one, few results exist on the well-posedness of system \eqref{eq:pressureless}, so we will not consider this case in detail. On the other hand, in dimension one, sticky solutions to system \eqref{eq:pressureless}  have been widely studied in the literature. In particular,  Brenier and Grenier \cite{brenier1998one} showed that one can construct solutions to \eqref{eq:pressureless} using the unique entropy solution of a scalar conservation law, and in particular a solution to \eqref{eq:pressureless} is given by the curve
	\[ 
	\omega(t) = \tilde{X}(t,\cdot)_\# \mu_0,
	\] 
	with 
	\begin{equation}\label{eq:pushenvelope}
		\tilde{X}(t,x) \coloneqq  (\partial_x \mathrm{co}\, \psi(t,\cdot)) \circ F_0(x)\,, \quad \psi(t,s) \coloneqq \int_{0}^s X(t, F_0^{[-1]}(s')) \, \ed s'\,, 
	\end{equation}
	where $X$ is defined as in \eqref{eq:rhopush}, and $F_0^{[-1]}: [0,1] \rightarrow \overline{\mathbb{R}}$ is the quantile function of $\mu_0$, i.e.\ the pseudo-inverse of its cumulative distribution function $F_0:x \rightarrow \int_{-\infty}^x \ed \mu_0(x)$. Note that as long as the geodesic can be extended $\psi(t,\cdot)$ stays convex (as it is the integral of a monotone function) and therefore the definitions for $X(t,\cdot)$ and $\tilde{X}(t,\cdot)$, respectively in \eqref{eq:rhopush} and \eqref{eq:pushenvelope}, coincide.  We will show that in this case the resulting notion of extrapolation coincides with that provided by the metric extrapolation, which is discussed in detail in the next section.
	\end{remark}
	
	\subsection{Metric extrapolation}\label{ssec:metricextra}
	
	In analogy with the Euclidean case (see equation \eqref{eq:euclideanvar}), one can adopt a variational definition for the extrapolation, which we refer to as \emph{metric extrapolation}, and which is defined for all $\alpha>1$ and for all $\mu_0,\mu_1 \in\mc{P}_2(\mathbb{R}^d)$ by
	\begin{equation}\label{eq:metricextraF}
	\extra(\mu_0,\mu_1) \coloneqq \underset{\rho \in \mathcal{P}_2(\mathbb{R}^d)}{\mathrm{argmin}} ~ \mc{F}(\mu_0,\mu_1; \rho) \,, \quad \mc{F}(\mu_0,\mu_1; \rho) \coloneqq \alpha W_2^2(\rho,\mu_1) - \beta W_2^2(\rho,\mu_0)\,,
	\end{equation}
	where $\beta =\alpha -1$. In Proposition \ref{prop:existence}  we will show that problem \eqref{eq:metricextraF} admits indeed a unique solution, which justifies the definition of the metric extrapolation.
	
	\begin{remark}\label{rem:omega} Alternatively, one can define the metric extrapolation as in equation \eqref{eq:metricextraF} via a minimization on probability measures in $\mc{P}(\Omega)$ over a given compact domain $\Omega$. In this case, differently from the free-flow case \eqref{eq:lagextra}, the support of the extrapolated measures is always contained in ${\Omega}$. The results of this section hold also in this case without major changes.  
	\end{remark}
	
	First of all, we observe that by the triangular and Young's inequalities, for any $\rho, \mu_0, \mu_1 \in \mc{P}_2(\mathbb{R}^d)$
	\[
	W^2_2(\rho,\mu_0) \leq   \left(1+\frac{1}{\beta}\right) W^2_2(\rho,\mu_1) + (1+\beta) W^2_2(\mu_0,\mu_1)
	\]
	and therefore
	\begin{equation}\label{eq:boundbelow}
	\mc{F}(\mu_0,\mu_1; \rho) \geq  - \alpha \beta W^2_2(\mu_0,\mu_1).
	\end{equation}
	Then, if there exists a unique geodesic \eqref{eq:brenier2} from $\mu_0$ to $\mu_1$ and this can be continued up to time $\alpha$, i.e.\ if the associated Brenier potential $u$ is $\beta/\alpha$-convex, then the lower bound is attained only by $\rho=\omega(\alpha)$ with
	\[
	\omega(\alpha) =  ((1-\alpha) \mathrm{Id} + \alpha \nabla u)_\# \mu_0, 
	\]
	since by equation \eqref{eq:geodesic}
	\[
	W^2_2(\mu_0,\omega(\alpha)) = \alpha^2 W^2_2(\mu_0,\mu_1) \,, \quad 
	W^2_2(\mu_1,\omega(\alpha)) = \beta^2 W^2_2(\mu_0,\mu_1)\,.
	\]
	\begin{remark} Note that  if the geodesic extension is only locally (but not globally) minimizing, then it may not be recovered as a solution of problem \eqref{eq:metricextraF}: for instance, this is the case for the shear flow example \ref{ex:shear}, in which case one can compute the explicit solution to the metric extrapolation problem, which is represented in Figure \ref{fig:shear}.
	\end{remark} 
	
	\begin{figure}
	\begin{overpic}[scale=.9]{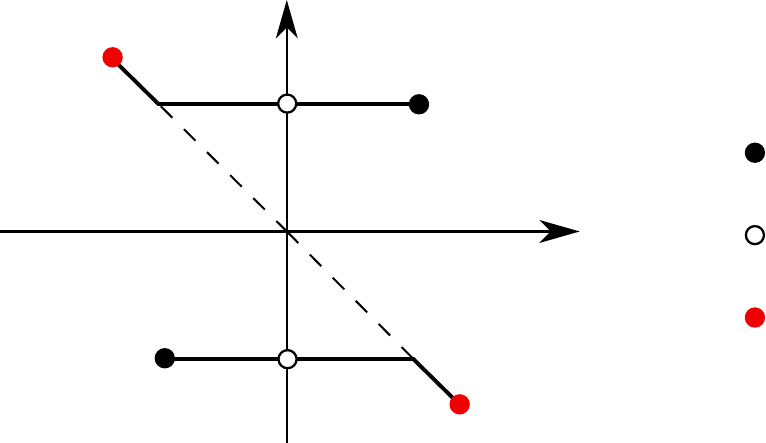}
		\put(205,74){$\mu_0$}
		\put(205,52){$\mu_1$}
		\put(142,44){$x$}
		\put(82,107){$y$}
		\put(205,30){$\mu_\alpha$}
	\end{overpic}
	\caption{Metric extrapolation in the setting of Example \ref{ex:shear}. The black solid line connecting the support of the three measures represents the trajectory followed by the extrapolated measure for different values of the parameter $\alpha$.}\label{fig:shear}
	\end{figure}
	
	Existence and uniqueness for minimizers of problem \eqref{eq:metricextraF} actually hold in general due to the fact that the functional $\Fc$ is strongly convex along particular curves known as generalized geodesics.
	To describe such curves, consider three measures $\nu_0, \nu_1, \nu_2 \in \mc{P}_2(\mathbb{R}^d)$, let $\gamma_{0,1}\in \mc{P}_2(\mathbb{R}^d\times \mathbb{R}^d)$ and  $\gamma_{0,2}\in \mc{P}_2(\mathbb{R}^d\times \mathbb{R}^d)$ optimal transport plans from $\nu_0$ to $\nu_1$ and from $\nu_0$ to {$\nu_2$}, respectively. A generalized geodesic from $\nu_1$ to $\nu_2$ with base $\nu_0$ is a curve $\omega:[0,1]\rightarrow\mathcal{P}_2(\mathbb{R}^d)$ satisfying, for all $\varphi\in C^0_b(\mathbb{R}^d)$,
	\[
	\int \varphi \omega(t)  = \int  \varphi(x_1(1-t) +x_2 t) \ed \gamma (x_0,x_1,x_2)
	\]
	where $\gamma \in \Pc_2(\mathbb{R}^d\times \mathbb{R}^d \times \mathbb{R}^d)$ is a plan verifying
	\begin{equation}\label{eq:gammagen}
	\begin{aligned} 
		\int \psi (x_0,x_1) \ed \gamma(x_0,x_1,x_2) &= \int  \psi(x_0,x_1) \ed \gamma_{0,1} (x_0,x_1) \,, \\
		\int \psi(x_0,x_2) \ed \gamma(x_0,x_1,x_2) &= \int \psi(x_0,x_2) \ed \gamma_{0,2} (x_0,x_2) \,,
	\end{aligned}
	\end{equation}
	for all $\psi \in C^0_b(\mathbb{R}^d\times \mathbb{R}^d)$.
	The existence of such a plan is a consequence of the so-called gluing lemma (Lemma  5.3.2 in \cite{ambrosio2008gradient}). In the case where $\nu_0$ is absolutely continuous, denoting by $T_{0,1}$ and $T_{0,2}$ the optimal transport plans from $\nu_0$ to $\nu_1$ and  from $\nu_0$ to $\nu_2$ respectively, there exists a unique generalized geodesic from $\nu_1$ to $\nu_2$ with base $\nu_0$ which is given by
	\begin{equation}\label{eq:gengeo}
	\omega(t) = ((1-t) T_{0,1} +t T_{0,2})_\#\nu_0\,.
	\end{equation}
	A functional $\mathcal{J}:\mathcal{P}_2(\mathbb{R}^d)\rightarrow \mathbb{R}$ is $\lambda$-convex along generalized geodesics based in $\nu_0$, if for all $\nu_1$ to $\nu_2$ and for all generalized geodesics $\omega :[0,1]\rightarrow\mathcal{P}_2(\mathbb{R}^d)$ from $\nu_1$ to $\nu_2$ with base $\nu_0$,
	\begin{equation}\label{eq:lambgenconv}
	\mathcal{J}(\omega(t)) \leq (1-t) \mathcal{J}(\nu_1) + t \mathcal{J}(\nu_2) - \lambda \frac{t(1-t)}{2} \int |x_1-x_2|^2 \ed \gamma(x_0,x_1,x_2)
	\end{equation}
	with $\gamma$ satisfying equation \eqref{eq:gammagen}. {We say that the functional $\mathcal{J}$ is $\lambda$-convex along generalized geodesics if the previous definition holds true for any $\nu_0\in\mathcal{P}_2(\mathbb{R}^d)$.}
	
	The following result was proven in \cite{Matthes2019bdf2} and provides the strong convexity of the functional $\mc{F}$ along generalized geodesics.
	
	\begin{lemma}[Theorem 3.4 in \cite{Matthes2019bdf2}]\label{lem:abconvex}
	For any $\mu_0,\mu_1 \in \mc{P}_2(\mathbb{R}^d)$, the functional $\Fc(\mu_0,\mu_1; \cdot):\mathcal{P}_2(\mathbb{R}^d) \rightarrow\mathbb{R}$ defined in \eqref{eq:metricextraF} is 2-convex along generalized geodesics based in $\mu_1$. In particular, for any $\mu_2, \mu_3 \in \mc{P}_2(\mathbb{R}^d)$ there exists a curve  $\omega:[0,1]\rightarrow \Pc_2(\mathbb{R}^d), \omega(0)=\mu_2, \omega(1)=\mu_3$, such that for all $t\in[0,1]$, it holds:
	\begin{equation}\label{eq:abconvex}
		\Fc(\mu_0,\mu_1; \omega(t)) \le (1-t) \Fc(\mu_0,\mu_1; \mu_2) + t \Fc(\mu_0,\mu_1; \mu_3)- t(1-t)W^2_2(\mu_2,\mu_3).
	\end{equation}
	\end{lemma}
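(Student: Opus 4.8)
The plan is to prove the stronger $2$-convexity statement, from which \eqref{eq:abconvex} follows: fix a generalized geodesic $\omega:[0,1]\to\mc{P}_2(\mathbb{R}^d)$ from $\mu_2$ to $\mu_3$ with base $\mu_1$, represented as in \eqref{eq:gammagen} by a plan $\gamma$ on triples $(x_1,x_2,x_3)$ whose $(x_1,x_2)$- and $(x_1,x_3)$-marginals are optimal for $\mu_1\to\mu_2$ and $\mu_1\to\mu_3$, with $\omega(t)$ the law of $(1-t)x_2+tx_3$. Setting $K:=\int|x_2-x_3|^2\,\ed\gamma\ge W_2^2(\mu_2,\mu_3)$, I want
\[
\Fc(\mu_0,\mu_1;\omega(t)) \le (1-t)\,\Fc(\mu_0,\mu_1;\mu_2) + t\,\Fc(\mu_0,\mu_1;\mu_3) - t(1-t)\,K .
\]
I would treat the two pieces of $\Fc=\alpha W_2^2(\cdot,\mu_1)-\beta W_2^2(\cdot,\mu_0)$ separately along $\omega$, using throughout the pointwise identity $|(1-t)a+tb-c|^2=(1-t)|a-c|^2+t|b-c|^2-t(1-t)|a-b|^2$, and then recombine via $\alpha-\beta=1$.

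For the first piece the point is that $\mu_1$ is the base of $\omega$, so the plan obtained by pushing $\gamma$ forward under $(x_1,x_2,x_3)\mapsto(x_1,(1-t)x_2+tx_3)$ is an optimal plan from $\mu_1$ to $\omega(t)$: the $(x_1,x_2)$- and $(x_1,x_3)$-marginals of $\gamma$ are supported on cyclically monotone sets, and a direct check with the defining inequalities shows this is preserved after replacing the second coordinate by $(1-t)x_2+tx_3$ (equivalently, when $\mu_1$ is absolutely continuous, the interpolating map is the gradient of the convex function $(1-t)u_2+tu_3$). Using the identity with $c=x_1$ together with $\int|x_1-x_2|^2\,\ed\gamma=W_2^2(\mu_1,\mu_2)$ and $\int|x_1-x_3|^2\,\ed\gamma=W_2^2(\mu_1,\mu_3)$ then yields the \emph{equality}
\[
W_2^2(\omega(t),\mu_1) = (1-t)\,W_2^2(\mu_2,\mu_1) + t\,W_2^2(\mu_3,\mu_1) - t(1-t)\,K .
\]

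For the second piece, which carries a minus sign, I need the \emph{reverse} inequality
\[
W_2^2(\omega(t),\mu_0) \ge (1-t)\,W_2^2(\mu_2,\mu_0) + t\,W_2^2(\mu_3,\mu_0) - t(1-t)\,K ,
\]
a $(-2)$-convexity of $W_2^2(\cdot,\mu_0)$ along generalized geodesics based at $\mu_1$. Since $\mu_0$ is \emph{not} the base, the usual $2$-convexity of $W_2^2(\cdot,\mu_0)$ along generalized geodesics based at $\mu_0$ does not help, and a plain coupling of $\omega(t)$ with $\mu_0$ only produces an upper bound; one must argue by Kantorovich duality, writing $W_2^2(\omega(t),\mu_0)=\sup_\phi\{\int\phi\,\ed\omega(t)+\int\phi^c\,\ed\mu_0\}$, taking a near-optimal $c$-concave potential $\phi$ (which is of the form $|\cdot|^2$ minus a convex function) and lower-bounding $\int\phi\,\ed\omega(t)=\int\phi\big((1-t)x_2+tx_3\big)\,\ed\gamma$ via the quadratic identity and the convexity of $|\cdot|^2-\phi$. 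This is the delicate step, and it is exactly the content of \cite[Theorem 3.4]{Matthes2019bdf2}; I expect it to be the main obstacle.

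Finally, combining the last two displays and using $\alpha-\beta=1$,
\[
\Fc(\mu_0,\mu_1;\omega(t)) \le \alpha\big[(1-t)W_2^2(\mu_2,\mu_1)+tW_2^2(\mu_3,\mu_1)\big] - \beta\big[(1-t)W_2^2(\mu_2,\mu_0)+tW_2^2(\mu_3,\mu_0)\big] - t(1-t)\,K ,
\]
and regrouping the convex combinations coordinatewise gives exactly $(1-t)\Fc(\mu_0,\mu_1;\mu_2)+t\,\Fc(\mu_0,\mu_1;\mu_3)-t(1-t)K$. Thus everything except the reverse-convexity bound is the elementary quadratic identity plus optimality of the marginals of $\gamma$, but controlling the concavity of $W_2^2(\cdot,\mu_0)$ along a generalized geodesic whose base is a \emph{third} measure genuinely requires the duality argument.
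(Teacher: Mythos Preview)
The paper does not supply its own proof of this lemma; it simply quotes Theorem~3.4 of \cite{Matthes2019bdf2}. So the comparison is between your sketch and the argument in that reference.

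Your decomposition $\Fc=\alpha W_2^2(\cdot,\mu_1)-\beta W_2^2(\cdot,\mu_0)$ and the treatment of the first term are fine: the push-forward plan $(x_1,(1-t)x_2+tx_3)_\#\gamma$ is indeed optimal (its support is cyclically monotone because each summand in the defining inequality splits as a non-negative $(1-t)$-part and $t$-part coming from the cyclical monotonicity of $\gamma_{1,2}$ and $\gamma_{1,3}$), so you even get equality there. The gap is in the second term. You correctly identify that one needs
\[
W_2^2(\omega(t),\mu_0)\ \ge\ (1-t)\,W_2^2(\mu_2,\mu_0)+t\,W_2^2(\mu_3,\mu_0)-t(1-t)\,K,
\]
but the duality route you propose does not close. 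If $\phi$ is (near-)optimal for the pair $(\omega(t),\mu_0)$ and you lower-bound $\int\phi\,\ed\omega(t)$ via the semiconcavity of $\phi$, you arrive at
\[
W_2^2(\omega(t),\mu_0)\ \ge\ (1-t)\Big[\textstyle\int\phi\,\ed\mu_2+\int\phi^c\,\ed\mu_0\Big]+t\Big[\textstyle\int\phi\,\ed\mu_3+\int\phi^c\,\ed\mu_0\Big]-t(1-t)\,K.
\]
But $\phi$ is not optimal for $(\mu_2,\mu_0)$ nor for $(\mu_3,\mu_0)$, so each bracket is only $\le W_2^2(\mu_i,\mu_0)$: the inequality points the wrong way and the argument stalls.

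The step goes through by a \emph{primal} gluing rather than duality. Take an optimal plan $\sigma\in\Pi(\mu_0,\omega(t))$ and lift it to a plan $\Sigma$ on $(y,x_1,x_2,x_3)$ whose $(x_1,x_2,x_3)$-marginal is $\gamma$ and whose $(y,(1-t)x_2+tx_3)$-marginal is $\sigma$ (disintegrate both $\sigma$ and $\gamma$ over their common marginal $\omega(t)$ and tensor the conditional laws). The quadratic identity then gives
\[
W_2^2(\omega(t),\mu_0)=\int\big|(1-t)x_2+tx_3-y\big|^2\,\ed\Sigma
=(1-t)\!\int|x_2-y|^2\,\ed\Sigma+t\!\int|x_3-y|^2\,\ed\Sigma-t(1-t)\,K,
\]
and since the $(y,x_2)$- and $(y,x_3)$-marginals of $\Sigma$ are (possibly suboptimal) couplings of $\mu_0$ with $\mu_2$ and $\mu_3$, the first two integrals dominate $W_2^2(\mu_2,\mu_0)$ and $W_2^2(\mu_3,\mu_0)$, yielding exactly the lower bound you need. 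This is the mechanism behind the Matthes--Plazotta result; once you replace the duality step by this lifting, your combination via $\alpha-\beta=1$ finishes the proof as written.
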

		
	Lemma \ref{lem:abconvex} is the main ingredient to prove the following proposition.
	
	\begin{proposition}\label{prop:existence}
	The metric extrapolation problem \eqref{eq:metricextraF} admits a unique solution $\mu_\alpha$. Moreover, the metric extrapolation is $\beta$-dissipative, i.e.
	\begin{equation}\label{eq:betadisalp}
	W_2(\mu_1, \mu_\alpha) \leq \beta W_2(\mu_0,\mu_1)\,,
	\end{equation}
	and for all $\mu\in\mathcal{P}_2(\mathbb{R}^d)$,
	\begin{equation}
	\label{eq:rhoab_ineq}
	\Wc_2^2(\mu,\mu_\alpha) + \Fc(\mu_0,\mu_1; \mu_\alpha) \le \Fc(\mu_0,\mu_1; \mu )\,.
	\end{equation}
	\end{proposition}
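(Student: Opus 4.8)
The plan is to derive all three claims from the $2$-convexity along generalized geodesics provided by Lemma \ref{lem:abconvex}, together with the lower bound \eqref{eq:boundbelow} which guarantees the functional does not degenerate to $-\infty$. First I would establish \emph{existence}: by \eqref{eq:boundbelow}, $\inf_\rho \Fc(\mu_0,\mu_1;\rho) \geq -\alpha\beta W_2^2(\mu_0,\mu_1) > -\infty$, so there is a minimizing sequence $(\rho_k)_k$. To extract a limit, I would argue that $(\rho_k)_k$ has uniformly bounded second moments: using the triangular inequality, $\alpha W_2^2(\rho_k,\mu_1) \leq \Fc(\mu_0,\mu_1;\rho_k) + \beta W_2^2(\rho_k,\mu_0) \leq \Fc(\mu_0,\mu_1;\rho_k) + 2\beta W_2^2(\rho_k,\mu_1) + 2\beta W_2^2(\mu_0,\mu_1)$, which for $\alpha > 2\beta$ --- and in fact more care is needed here since $\alpha-2\beta = 2-\alpha$ can be small or negative --- bounds $W_2(\rho_k,\mu_1)$; alternatively, and more robustly, one notes directly that $\Fc$ is continuous along any generalized geodesic and uses \eqref{eq:abconvex} with $\mu_2 = \rho_k$, $\mu_3 = $ a fixed reference measure to show the minimizing sequence stays in a bounded ball. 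Then by Prokhorov's theorem $(\rho_k)_k$ is narrowly precompact; since $\rho \mapsto W_2^2(\rho,\mu_1)$ is narrowly lower semicontinuous and $\rho\mapsto W_2^2(\rho,\mu_0)$ is narrowly continuous on sets of bounded second moment, $\Fc(\mu_0,\mu_1;\cdot)$ is narrowly l.s.c., and any narrow limit point $\mu_\alpha$ is a minimizer.

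For \emph{uniqueness}, suppose $\mu_\alpha$ and $\mu_\alpha'$ are both minimizers with common value $m = \inf\Fc$. Applying Lemma \ref{lem:abconvex} with $\mu_2 = \mu_\alpha$ and $\mu_3 = \mu_\alpha'$ yields a curve $\omega$ with
\[
\Fc(\mu_0,\mu_1;\omega(t)) \leq (1-t)m + tm - t(1-t)W_2^2(\mu_\alpha,\mu_\alpha') = m - t(1-t)W_2^2(\mu_\alpha,\mu_\alpha')\,.
\]
If $W_2(\mu_\alpha,\mu_\alpha') > 0$ this would give $\Fc(\mu_0,\mu_1;\omega(1/2)) < m$, contradicting minimality; hence $\mu_\alpha = \mu_\alpha'$.

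For the inequality \eqref{eq:rhoab_ineq}, fix an arbitrary $\mu \in \mc{P}_2(\mathbb{R}^d)$ and apply Lemma \ref{lem:abconvex} with $\mu_2 = \mu_\alpha$ and $\mu_3 = \mu$, obtaining a generalized geodesic $\omega$ with $\omega(0) = \mu_\alpha$, $\omega(1) = \mu$, and
\[
\Fc(\mu_0,\mu_1;\omega(t)) \leq (1-t)\Fc(\mu_0,\mu_1;\mu_\alpha) + t\Fc(\mu_0,\mu_1;\mu) - t(1-t)W_2^2(\mu_\alpha,\mu)\,.
\]
Since $\mu_\alpha$ is the minimizer, $\Fc(\mu_0,\mu_1;\omega(t)) \geq \Fc(\mu_0,\mu_1;\mu_\alpha)$ for all $t$, so rearranging and dividing by $t>0$ gives $(1-t)W_2^2(\mu_\alpha,\mu) \leq \Fc(\mu_0,\mu_1;\mu) - \Fc(\mu_0,\mu_1;\mu_\alpha)$; letting $t\to 0^+$ yields \eqref{eq:rhoab_ineq}. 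Finally, for the $\beta$-dissipativity \eqref{eq:betadisalp}, I would specialize \eqref{eq:rhoab_ineq} to $\mu = \mu_1$: this gives $W_2^2(\mu_1,\mu_\alpha) \leq \Fc(\mu_0,\mu_1;\mu_1) - \Fc(\mu_0,\mu_1;\mu_\alpha) = \alpha W_2^2(\mu_1,\mu_1) - \beta W_2^2(\mu_1,\mu_0) - \Fc(\mu_0,\mu_1;\mu_\alpha) = -\beta W_2^2(\mu_0,\mu_1) - \Fc(\mu_0,\mu_1;\mu_\alpha)$. Since $\Fc(\mu_0,\mu_1;\mu_\alpha) = \alpha W_2^2(\mu_\alpha,\mu_1) - \beta W_2^2(\mu_\alpha,\mu_0)$ and, again by the triangular inequality applied with an optimal Young parameter, $-\beta W_2^2(\mu_\alpha,\mu_0) \geq -\beta(1+1/\beta)W_2^2(\mu_\alpha,\mu_1) - \beta(1+\beta)W_2^2(\mu_0,\mu_1) = -(\alpha)W_2^2(\mu_\alpha,\mu_1) - \alpha\beta W_2^2(\mu_0,\mu_1) + W_2^2(\mu_\alpha,\mu_1) - W_2^2(\mu_\alpha,\mu_1)$ --- here one needs to track constants carefully --- one arrives at $W_2^2(\mu_1,\mu_\alpha) \leq \beta^2 W_2^2(\mu_0,\mu_1)$, i.e. \eqref{eq:betadisalp}. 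I expect the \textbf{main obstacle} to be the bookkeeping in the dissipativity estimate: getting exactly the constant $\beta$ (not something larger) requires combining \eqref{eq:rhoab_ineq} with the sharp Young inequality in \eqref{eq:boundbelow} in just the right way, rather than a crude bound; the existence step, by contrast, is standard once the second-moment bound on the minimizing sequence is secured.
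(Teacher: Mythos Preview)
Your arguments for uniqueness, for inequality \eqref{eq:rhoab_ineq}, and for the $\beta$-dissipativity are essentially the paper's: strong convexity along generalized geodesics at $t=1/2$ forces uniqueness; minimality plus \eqref{eq:abconvex}, divided by $t$ and sent to $0$, gives \eqref{eq:rhoab_ineq}; and specializing \eqref{eq:rhoab_ineq} to $\mu=\mu_1$ together with the lower bound \eqref{eq:boundbelow} on $\Fc(\mu_0,\mu_1;\mu_\alpha)$ yields $W_2^2(\mu_1,\mu_\alpha)\le(\alpha\beta-\beta)W_2^2(\mu_0,\mu_1)=\beta^2W_2^2(\mu_0,\mu_1)$. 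Your dissipativity computation is just this, with \eqref{eq:boundbelow} re-derived inline; you can shorten it considerably by citing \eqref{eq:boundbelow} directly.

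The genuine gap is in your existence argument. Your direct-method strategy relies on the claim that $\rho\mapsto W_2^2(\rho,\mu_0)$ is narrowly \emph{continuous} on sets of bounded second moment, so that $\Fc=\alpha W_2^2(\cdot,\mu_1)-\beta W_2^2(\cdot,\mu_0)$ is narrowly lower semicontinuous. This is false: $W_2^2(\cdot,\mu_0)$ is only narrowly lower semicontinuous, not upper semicontinuous, even under a uniform second-moment bound (take e.g.\ $\rho_k=(1-1/k)\delta_0+(1/k)\delta_{\sqrt{k}}$ in $\mathbb{R}$, which has second moment $1$ for all $k$, converges narrowly to $\delta_0$, yet $W_2^2(\rho_k,\delta_0)=1\not\to 0$). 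Hence the term $-\beta W_2^2(\cdot,\mu_0)$ can jump up along a narrow limit, and you cannot conclude that a narrow cluster point of a minimizing sequence is a minimizer.

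The paper sidesteps this entirely: instead of compactness plus lower semicontinuity, it shows that any minimizing sequence is \emph{Cauchy in} $(\mc{P}_2(\mathbb{R}^d),W_2)$. Applying \eqref{eq:abconvex} at $t=1/2$ to two elements $\rho_k,\rho_j$ of the sequence and using that the midpoint value is at least the infimum $m$ gives
\[
\tfrac14 W_2^2(\rho_k,\rho_j)\ \le\ \tfrac12\big(\Fc(\mu_0,\mu_1;\rho_k)-m\big)+\tfrac12\big(\Fc(\mu_0,\mu_1;\rho_j)-m\big)\ \longrightarrow\ 0.
\]
Completeness of the Wasserstein space then yields a $W_2$-limit $\mu_\alpha$, and since $\Fc$ is \emph{$W_2$-continuous} (both terms are), $\mu_\alpha$ is the minimizer. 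Your ``alternative'' sentence, where you pair $\rho_k$ with a fixed reference in \eqref{eq:abconvex}, is one step away from this: pair $\rho_k$ with $\rho_j$ instead, and you recover the Cauchy argument without ever needing narrow lower semicontinuity.
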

	
	\begin{proof} 
	The functional $\Fc$ is strongly convex along generalized geodesics by Lemma \ref{lem:abconvex}, which implies uniqueness of the solution.
	Regarding existence, let $(\mu^n)_n$ be a minimizing sequence. We denote $m=\inf_{\mu\in\Pc_2(\Rd)} \Fc(\mu_0,\mu_1;\mu)$, which is finite due to \eqref{eq:boundbelow}, and we introduce $\Gc(\mu)=\Fc(\mu_0,\mu_1;\mu)-m$. Consider two measures $\mu^{n_1},\mu^{n_2}$ of the sequence and the generalized geodesic $\omega$ based in $\mu_1$ connecting them, as in Lemma \ref{lem:abconvex}. The inequality \eqref{eq:abconvex} for $t=\frac12$ provides
	\[
	\frac14 W_2^2(\mu^{n_1},\mu^{n_2})\le \frac12 \Gc(\mu^{n_1})+\frac12 \Gc(\mu^{n_2}) \,,
	\]
	which implies that the sequence is Cauchy in the Wasserstein space $(\Pc_2(\Rd),W_2)$. The Wasserstein space being complete \cite[Proposition 7.1.5]{ambrosio2008gradient}, the sequence converges to a measure $\mu_\alpha$, which is the minimizer since $\Fc$ is continuous.
	
	Inequality \eqref{eq:rhoab_ineq} derives again from Lemma \ref{lem:abconvex}. For a given $\mu\in\Pc_2(\mathbb{R}^d)$, consider a generalized geodesic $\omega$ as in Lemma \ref{lem:abconvex}, with $\omega(0) =\mu_\alpha$ and $\omega(1)=\mu$. By optimality of $\mu_\alpha$, it holds
	\[
	\begin{aligned}
	0 &\le \Fc(\mu_0,\mu_1; \omega(t))-\Fc(\mu_0, \mu_1; \mu_\alpha) \\
	&\le t\big(\Fc(\mu_0,\mu_1; \mu)-\Fc(\mu_0, \mu_1; \mu_\alpha )\big)- t(1-t)\Wc_2^2(\mu,\mu_\alpha)\,,
	\end{aligned}
	\]
	which, dividing by $t$ and taking the limit $t\rightarrow0$, gives \eqref{eq:rhoab_ineq}. Using \eqref{eq:boundbelow} on the left-hand side of \eqref{eq:rhoab_ineq} and then taking $\mu=\mu_1$, we obtain the estimate \eqref{eq:betadisalp}.
	
	\end{proof}
	
	In order to prove the consistency assumption we will use the following optimality conditions for problem \eqref{eq:metricextraF}.
	
	\begin{lemma}\label{lem:optgamma} Let $\mu_\alpha$ be the unique solution to problem \eqref{eq:metricextraF}. 
	There exist two optimal transport plans $\gamma_{0,\alpha}$ and $\gamma_{1,\alpha}$ from $\mu_0$ to $\mu_\alpha$ and from $\mu_1$ to $\mu_{\alpha}$, respectively, such that
	\begin{equation}\label{eq:rhoab_OC}
	\alpha \int (x_\alpha - x_1)\cdot {\xi}(x_\alpha) \d \gamma_{1,\alpha} (x_1,x_\alpha)-\beta \int (x_\alpha - x_0)\cdot {\xi}(x_\alpha) \d \gamma_{0,\alpha} (x_0,x_\alpha) = 0 \,,
	\end{equation}
	for any ${\xi}\in\C^\infty_c(\Rd;\Rd)$.
	\end{lemma}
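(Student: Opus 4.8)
The plan is to obtain \eqref{eq:rhoab_OC} as the first-order optimality condition for the minimization \eqref{eq:metricextraF}, by perturbing the minimizer $\mu_\alpha$ through the flow of the test field $\xi$. Fix $\xi\in\C^\infty_c(\Rd;\Rd)$ and let $\Phi_s$ be its flow: a diffeomorphism of $\Rd$ for every $s\in\R$, equal to the identity outside a fixed compact set, with $\Phi_{\pm s}(x)=x\pm s\,\xi(x)+O(s^2)$ uniformly in $x$. Set $\omega(s):=(\Phi_s)_\#\mu_\alpha$, so that $\omega(0)=\mu_\alpha$, $W_2(\omega(s),\mu_\alpha)=O(s)$, and $s\mapsto\mc{F}(\mu_0,\mu_1;\omega(s))$ attains its minimum at $s=0$ by the definition of $\mu_\alpha$ (equivalently, by \eqref{eq:rhoab_ineq}). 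It thus suffices to identify the one-sided derivatives of $s\mapsto W_2^2(\omega(s),\mu_i)$ at $s=0$ for $i=0,1$, in the spirit of the variational computation behind Lemma~\ref{lem:variations_ac} but without assuming $\mu_\alpha$ absolutely continuous.

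For the upper bound, given any optimal plan $\gamma_{i,\alpha}\in\Gamma(\mu_i,\mu_\alpha)$, the plan $(\mathrm{Id}\times(\Phi_s\circ\pi_2))_\#\gamma_{i,\alpha}$ is an admissible coupling of $\mu_i$ and $\omega(s)$; expanding $|x_i-\Phi_s(x_\alpha)|^2$ and using $\mu_i,\mu_\alpha\in\mc{P}_2(\Rd)$ gives, uniformly in $s$,
\[
W_2^2(\omega(s),\mu_i)\ \le\ W_2^2(\mu_i,\mu_\alpha)+2s\int(x_\alpha-x_i)\cdot\xi(x_\alpha)\,\ed\gamma_{i,\alpha}(x_i,x_\alpha)+O(s^2)\,.
\]
For the lower bound, let $\gamma_s$ be optimal between $\mu_i$ and $\omega(s)$; then $(\mathrm{Id}\times(\Phi_s^{-1}\circ\pi_2))_\#\gamma_s$ is an admissible coupling of $\mu_i$ and $\mu_\alpha$, whence $W_2^2(\omega(s),\mu_i)\ge W_2^2(\mu_i,\mu_\alpha)+2s\int(y-x_i)\cdot\xi(y)\,\ed\gamma_s(x_i,y)+O(s^2)$. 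As $s\to0$ we have $\omega(s)\to\mu_\alpha$ in $W_2$, so $(\gamma_s)_s$ is narrowly precompact and, by stability of optimal transport, every narrow limit point lies in $\Gamma(\mu_i,\mu_\alpha)$; since the integrand grows only linearly in $x_i$, which is uniformly integrable against the common marginal $\mu_i$, these integrals converge along subsequences to $\int(x_\alpha-x_i)\cdot\xi(x_\alpha)\,\ed\bar\gamma$ for the corresponding limit plan $\bar\gamma$.

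Running these estimates for $\xi$ and for $-\xi$ shows that the one-sided derivatives of $s\mapsto W_2^2(\omega(s),\mu_i)$ at $0$ exist and equal, respectively, twice the minimum and twice the maximum of $\gamma\mapsto\int(x_\alpha-x_i)\cdot\xi(x_\alpha)\,\ed\gamma$ over $\gamma\in\Gamma(\mu_i,\mu_\alpha)$. Whenever the optimal coupling between $\mu_i$ and $\mu_\alpha$ is unique — in particular whenever $\mu_0$ and $\mu_1$ are absolutely continuous, which covers the cases needed for the applications of the lemma — the two coincide, so $s\mapsto\mc{F}(\mu_0,\mu_1;\omega(s))$ is differentiable at $0$ and the vanishing of its derivative, over all $\xi$, is exactly \eqref{eq:rhoab_OC}, with $\gamma_{i,\alpha}$ the unique optimal plan from $\mu_i$ to $\mu_\alpha$. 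For general $\mu_0,\mu_1$ the same bounds still hold but the two one-sided variations need not agree; one then feeds the estimates into the difference-of-convex structure of $\mc{F}$ — or, equivalently, combines \eqref{eq:rhoab_ineq} with the description of the subdifferentials of $W_2^2(\cdot,\mu_0)$ and $W_2^2(\cdot,\mu_1)$ at $\mu_\alpha$ via their optimal plans, as in \cite[Chapter~10]{ambrosio2008gradient} — to select a single pair $(\gamma_{0,\alpha},\gamma_{1,\alpha})$ satisfying \eqref{eq:rhoab_OC} against every $\xi$ at once.

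I expect the main obstacle to be precisely this last point. When $\mu_\alpha$ fails to be absolutely continuous the optimal couplings between $\mu_i$ and $\mu_\alpha$ may be non-unique, so that $W_2^2(\cdot,\mu_i)$ is not differentiable along the flow and one cannot merely ``differentiate and set the derivative to zero''; reconciling the $\min$-type and $\max$-type one-sided variations into the single identity \eqref{eq:rhoab_OC}, and exhibiting one pair of plans that works for all $\xi$, is where the convexity-along-generalized-geodesics structure, rather than bare calculus, must be invoked. The remaining ingredients — the Taylor expansions, the uniform $O(s^2)$ control afforded by $\xi\in\C^\infty_c$, and the narrow-stability passage to the limit — are routine.
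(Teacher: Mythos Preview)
Your direct variational approach is sound under the extra hypothesis that $\mu_0$ and $\mu_1$ are absolutely continuous: then the optimal couplings in $\Gamma(\mu_i,\mu_\alpha)$ are unique by Brenier's theorem, your upper and lower one-sided derivatives agree, and \eqref{eq:rhoab_OC} follows for the unique pair of plans. This is a genuinely different and more elementary route than the paper's in that regime. The paper instead proves the lemma for arbitrary $\mu_0,\mu_1\in\mc{P}_2(\Rd)$ by regularizing the functional, adding $\varepsilon\,\mc{U}(\cdot\,|\,\nu)$ (relative entropy with respect to a Gaussian) to $\mc{F}$. This forces the regularized minimizer $\mu_\alpha^\varepsilon$ to be absolutely continuous, so Lemma~\ref{lem:variations_ac} applies and yields the optimality condition with the unique plans $\gamma_{i,\alpha}^\varepsilon$. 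They then establish $\Gamma$-convergence of $\mc{F}_\varepsilon$ to $\mc{F}$, use the $2$-convexity along generalized geodesics (Lemma~\ref{lem:abconvex}) to show $(\mu_\alpha^\varepsilon)_\varepsilon$ is Cauchy in $W_2$ and converges to $\mu_\alpha$, and finally invoke stability of optimal plans to extract weak limits $\gamma_{i,\alpha}$ and pass to the limit in the optimality condition.

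The genuine gap in your proposal is exactly the one you flag. For general $\mu_0,\mu_1$ your one-sided bounds only give, for each fixed $\xi$,
\[
\alpha\min_{\gamma_1\in\Gamma(\mu_1,\mu_\alpha)}\int(x_\alpha-x_1)\cdot\xi\,\ed\gamma_1\ \ge\ \beta\max_{\gamma_0\in\Gamma(\mu_0,\mu_\alpha)}\int(x_\alpha-x_0)\cdot\xi\,\ed\gamma_0
\]
together with the reversed inequality with $\min$ and $\max$ swapped. Extracting from this a \emph{single} pair $(\gamma_{0,\alpha},\gamma_{1,\alpha})$ valid simultaneously for all $\xi$ is a nontrivial selection step that you do not carry out; the appeal to the difference-of-convex structure and the subdifferential calculus of \cite[Chapter~10]{ambrosio2008gradient} is not a proof, and the relevant convexity here is along generalized geodesics rather than linear interpolation, so those tools do not apply off the shelf. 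The paper's entropy regularization is designed precisely to bypass this selection issue: by making the minimizer itself absolutely continuous one obtains uniqueness of the plans at the $\varepsilon$-level, and the limiting pair is then produced by compactness rather than by any minimax argument.
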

	
	\begin{proof}
	Note that we cannot use directly Lemma \ref{lem:variations_ac} because $\mu_\alpha$ is not necessarily absolutely continuous. Therefore, in order to prove the result we construct a sequence of approximated smooth variational problems and pass to the limit in the optimality conditions.
	Let us define for $\varepsilon>0$,
	\begin{equation}\label{eq:Feps}
		\Fc_{\varepsilon}(\mu_0,\mu_1; \mu) \coloneqq \Fc(\mu_0,\mu_1; \mu) + \varepsilon \, \Uc(\mu|\nu) \,,
	\end{equation}
	where $\mc{U}(\cdot\,|\nu)$ denotes the relative entropy
	\begin{equation}\label{eq:relative_entropy}
		\Uc(\mu|\nu) \coloneqq
		\begin{cases} \displaystyle
			\int \log\left(\frac{\ed \mu}{\ed \nu}\right) \d \mu \quad &\text{if $\mu \ll \nu$} \,, \\
			+\infty \quad &\text{otherwise} \,,
		\end{cases}
	\end{equation}
	and $\nu= (2\pi)^{-d/2}\exp({-{|x|^2}/{2}})\ed x\in\mathcal{P}_2(\mathbb{R}^d)$. We introduce the regularized problem
	\begin{equation}\label{eq:defrhoabEps}
		\inf_{\mu\in\Pc_2(\Rd)} \Fc_{\varepsilon}(\mu_0,\mu_1; \mu) \,.
	\end{equation}
	
	Let $(\mu^n)_n$ be a minimizing sequence for \eqref{eq:defrhoabEps}. Due to Jensen's inequality the relative entropy is positive. Furthermore, it is convex along generalized geodesics \cite[Theorem 9.4.11]{ambrosio2008gradient}.
	Hence, reasoning as in Proposition \ref{prop:existence}, we obtain convergence in $W_2$ of $\mu^n$ to a measure $\mu_\alpha^\varepsilon$. The relative entropy is lower semi-continuous on the Wasserstein space $(\mc{P}_2(\mathbb{R}^d), W_2)$ \cite[Theorem 15.4]{ambrosio2021lectures} and therefore $\mu_\alpha^\varepsilon$ is the unique minimizer.
	
	Note that
	\[
	\int \log\left(\frac{\ed \mu}{\ed \nu}\right) \d \mu = \int \log\left(\frac{\ed\mu}{\ed x}\right) \ed \mu +\int \frac{|x|^2}{2} \ed \mu(x) + \frac{d}{2}\log(2\pi) \,, \quad \text{for $\mu\ll \nu\,$.}
	\]
	Therefore, by applying Lemma \ref{lem:variations_ac} (adapted to the case where $\Omega =\mathbb{R}^d$), we can write down the necessary optimality conditions of problem \eqref{eq:defrhoabEps}:
	\begin{multline}\label{eq:defrhoabEps_oc}
	\left.	\frac{\d \Fc_{\varepsilon}(\mu_0,\mu_1; \omega(s))}{\d s}\right|_{s=0} = 	2\alpha \int (x_\alpha - x_1)\cdot {\xi}(x_\alpha) \d \gamma^\varepsilon_{1,\alpha} (x_1,x_\alpha)\\-2\beta \int (x_\alpha - x_0)\cdot {\xi}(x_\alpha) \d \gamma^\varepsilon_{0,\alpha} (x_0,x_\alpha) +\varepsilon \int \big(x_\alpha\cdot\xi(x_\alpha)-\mathrm{div} ({\xi} (x_\alpha)) \big) \ed \mu_\alpha^{\varepsilon}(x_\alpha) = 0 \,,
	\end{multline}
	for any ${\xi}\in\C^\infty_c(\Rd;\Rd)$, where $\omega:(-\delta,\delta)\rightarrow \mc{P}_2(\mathbb{R}^d)$ is the curve of measures defined by \eqref{eq:curve_var} with $\omega(0)=\mu_\alpha$, and where we denote now by $\gamma^\varepsilon_{0,\alpha}$ and $\gamma^\varepsilon_{1,\alpha}$ the optimal transport plans from $\mu_0$ to $\mu_\alpha^\varepsilon$ and from $\mu_1$ to $\mu_\alpha^\varepsilon$, respectively.
	
	We want to show that the regularized functionals  $\Fc_\varepsilon(\mu_0,\mu_1;\cdot)$, interpreted as functionals on the Wasserstein space $(\mc{P}_2(\Rd),W_2)$, $\Gamma$-converges towards $\Fc(\mu_0,\mu_1;\cdot)$, in order to pass to the limit in the optimality conditions of problem  \eqref{eq:defrhoabEps}. 
	Since $\Fc$ is continuous with respect to $W_2$ convergence and $\Uc$ is positive, the $\Gamma$-$\liminf$ is obvious,
	\[
	\Fc(\mu_0,\mu_1; \mu) \le \liminf_{\varepsilon} \Fc(\mu_0,\mu_1; \mu_{\varepsilon}) \le  \liminf_{\varepsilon}\Fc_{\varepsilon}(\mu_0,\mu_1; \mu_\varepsilon) \,,
	\]
	for any $\mu_{\varepsilon}\rightarrow\mu$ in the Wasserstein sense. Concerning the $\Gamma$-$\limsup$, if $\Uc(\mu|\nu)<\infty$ we can take $\mu_{\varepsilon}=\mu$ as recovery sequence. Otherwise, since the set of absolutely continuous measures is dense in $\Pc_2(\Rd)$, 
	we can take a sequence of absolutely continuous measures $\mu_{\varepsilon}$ converging to $\mu$ with respect to the Wasserstein metric.
	Since $\Uc(\mu|\nu)=\infty$, up to a reparametrization we can assume that the relative entropy is increasing and that
	\[
	\Uc(\mu_{\varepsilon} | \nu)\le \frac{C}{\sqrt{\varepsilon}} \,, 
	\]
	for a constant $C$ independent of $\varepsilon$.
	Then it holds:
	\[
	\limsup_{\varepsilon} \Fc_{\varepsilon}(\mu_0,\mu_1; \mu_{\varepsilon}) = \lim_{\varepsilon} \Fc_{\varepsilon}(\mu_0,\mu_1; \mu_{\varepsilon}) = \Fc(\mu_0,\mu_1; \mu) \,.
	\]
	Therefore $\Fc_\varepsilon(\mu_0,\mu_1;\cdot)$ $\Gamma$-converges to $\Fc(\mu_0,\mu_1;\cdot)$.
	Let us show that the sequence of minimizer $(\mu^\varepsilon_\alpha)_\varepsilon$ is Cauchy.
	For this we observe that $(\Fc_\varepsilon(\mu_0,\mu_1;\mu^\varepsilon_\alpha))_\varepsilon$ is monotonically decreasing as $\varepsilon\rightarrow 0$ since, for $\varepsilon_2>\varepsilon_1$:
	\begin{equation}\label{eq:fepsmonotone}
		\Fc_{\varepsilon_2}(\mu_0,\mu_1;\mu_\alpha^{\varepsilon_2}) = (\varepsilon_2 - \varepsilon_1) \mc{U}(\mu_\alpha^{\varepsilon_2}|\nu) + 	\Fc_{\varepsilon_1}(\mu_0,\mu_1;\mu_\alpha^{\varepsilon_2}) \geq \Fc_{\varepsilon_1}(\mu_0,\mu_1;\mu_\alpha^{\varepsilon_1}).
	\end{equation}
	Since $\Fc_\varepsilon(\mu_0,\mu_1;\cdot)$ are uniformly bounded from below, $\Fc_\varepsilon(\mu_0,\mu_1;\mu^\varepsilon_\alpha)$ converges to a value ${m}$ as $\varepsilon\rightarrow 0$. Hence, we can define $\mc{G}_\varepsilon(\cdot) \coloneqq \Fc_\varepsilon(\mu_0,\mu_1;\cdot) - m\geq 0$. By the same arguments as in the proof of Proposition \ref{prop:existence} and the strong convexity of $\mc{G}_{\varepsilon_1}$ along generalized geodesics, for any $\varepsilon_2>\varepsilon_1$,  
	\[
	\frac{1}{4} W^2_2(\mu_\alpha^{\varepsilon_1},\mu_\alpha^{\varepsilon_2}) \leq \frac{1}{2}\mc{G}_{\varepsilon_1}(\mu_\alpha^{\varepsilon_1}) + \frac{1}{2} \mc{G}_{\varepsilon_1}(\mu_\alpha^{\varepsilon_2}) \leq  \frac{1}{2}\mc{G}_{\varepsilon_1}(\mu_\alpha^{\varepsilon_1}) + \frac{1}{2} \mc{G}_{\varepsilon_2}(\mu_\alpha^{\varepsilon_2})\,
	\]
	where the second inequality is a consequence of  \eqref{eq:fepsmonotone}. Since $\mc{G}_\varepsilon(\mu_\alpha^\varepsilon) \rightarrow 0$ as $\varepsilon\rightarrow 0$ we can conclude that $(\mu_\alpha^\varepsilon)_\varepsilon$ is Cauchy and by the $\Gamma$-convergence showed above, $\mu^\varepsilon_\alpha\rightarrow \mu_\alpha$ in $W_2$.
	
	Finally, by the stability of optimal transport plans {\cite[Theorem 5.20]{villani2009optimal}}, there exist optimal plans $\gamma_{0,\alpha}$ and $\gamma_{1,\alpha}$ from $\mu_0$ to $\mu_\alpha$ and from $\mu_1$ to $\mu_{\alpha}$, respectively, such that (up to the extraction of a subsequence)
	\[
	\gamma_{0,\alpha}^\varepsilon \rightharpoonup 	\gamma_{0,\alpha}\,, \quad 	\gamma_{1,\alpha}^\varepsilon \rightharpoonup 	\gamma_{1,\alpha} \,,
	\]
	weakly, i.e.\ in duality with continuous bounded functions (and also in the Wasserstein sense; in fact, the second moments of $\gamma_{0,\alpha}^\varepsilon$ and $\gamma_{1,\alpha}^\varepsilon$ converge to those of $\gamma_{0,\alpha}$ and $\gamma_{1,\alpha}$ since $\mu_\alpha^\varepsilon \rightarrow\mu_\alpha$ in the Wasserstein sense). 
	As the vector field ${\xi}$ is smooth, passing to the limit in \eqref{eq:defrhoabEps_oc} we obtain \eqref{eq:rhoab_OC}.
	
	\end{proof}
		
	\begin{proposition}
	The metric extrapolation defined via \eqref{eq:metricextraF} verifies the consistency assumption \eqref{eq:consistency} for all $\varphi\in \C^\infty_c(\mathbb{R}^d)$.
	\end{proposition}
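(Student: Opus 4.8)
The plan is to combine the first-order optimality identity of Lemma \ref{lem:optgamma} with a second-order Taylor expansion of $\varphi$. Write $\mu_\alpha \coloneqq \extra(\mu_0,\mu_1)$ and let $\gamma_{0,\alpha},\gamma_{1,\alpha}$ be the optimal plans from $\mu_0$ and $\mu_1$ to $\mu_\alpha$ provided by Lemma \ref{lem:optgamma}. Since $\alpha-\beta=1$, disintegrating $\int \varphi\,\ed\mu_\alpha$ against each of the two plans and subtracting gives
\[
\int_{\Rd} \varphi\,(\mu_\alpha - \alpha \mu_1 + \beta\mu_0) = \alpha \int \big(\varphi(x_\alpha) - \varphi(x_1)\big)\,\ed\gamma_{1,\alpha} - \beta \int \big(\varphi(x_\alpha) - \varphi(x_0)\big)\,\ed\gamma_{0,\alpha}\,.
\]

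Next I would expand $\varphi$ to second order around $x_\alpha$ in each integrand, $\varphi(x_i) = \varphi(x_\alpha) + \nabla\varphi(x_\alpha)\cdot(x_i - x_\alpha) + O(|x_i-x_\alpha|^2)$ for $i\in\{0,1\}$. The zeroth-order terms cancel, and the first-order terms assemble exactly into the left-hand side of the optimality identity \eqref{eq:rhoab_OC} with the test field $\xi = \nabla\varphi$, which belongs to $\C^\infty_c(\Rd;\Rd)$ because $\varphi\in\C^\infty_c(\Rd)$; hence they vanish. What remains are the two quadratic Taylor remainders, which are bounded in absolute value by $\tfrac12\|\mathrm{Hess}(\varphi)\|_\infty$ times
\[
\alpha \int |x_\alpha - x_1|^2\,\ed\gamma_{1,\alpha} + \beta \int |x_\alpha - x_0|^2\,\ed\gamma_{0,\alpha} = \alpha\, W_2^2(\mu_1,\mu_\alpha) + \beta\, W_2^2(\mu_0,\mu_\alpha)\,,
\]
using the optimality of the two plans.

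To close the estimate I would invoke the dissipativity bound $W_2(\mu_1,\mu_\alpha)\le\beta W_2(\mu_0,\mu_1)$ from Proposition \ref{prop:existence}, together with the triangle inequality $W_2(\mu_0,\mu_\alpha)\le W_2(\mu_0,\mu_1)+W_2(\mu_1,\mu_\alpha)\le (1+\beta)W_2(\mu_0,\mu_1)=\alpha W_2(\mu_0,\mu_1)$. This yields \eqref{eq:consistency} with $C_\varphi = \tfrac12(\alpha\beta^2+\beta\alpha^2)\|\mathrm{Hess}(\varphi)\|_\infty = \tfrac{\alpha\beta(\alpha+\beta)}{2}\|\mathrm{Hess}(\varphi)\|_\infty$, which depends only on $\alpha$ and $\varphi$, as required (and on $\Omega$ only through its diameter if one works with the variant of Remark \ref{rem:omega}).

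The hard part has in fact already been done: the delicate point — producing optimal plans satisfying the Euler--Lagrange equation even though $\mu_\alpha$ need not be absolutely continuous — is handled by Lemma \ref{lem:optgamma} through the entropic regularization and $\Gamma$-convergence argument. The only thing to be careful about here is that the Taylor expansion must be carried out at the level of transport plans rather than maps, precisely because $\mu_\alpha$ may fail to be absolutely continuous; but this is exactly the form in which Lemma \ref{lem:optgamma} is stated, so the argument goes through directly.
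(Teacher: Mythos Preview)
Your proposal is correct and follows essentially the same approach as the paper: split $\mu_\alpha=\alpha\mu_\alpha-\beta\mu_\alpha$ against the two optimal plans, Taylor-expand $\varphi$ at $x_\alpha$, kill the first-order terms via the optimality identity \eqref{eq:rhoab_OC} with $\xi=\nabla\varphi$, and control the quadratic remainders using the $\beta$-dissipativity of Proposition \ref{prop:existence} together with the triangle inequality. The paper records the final constant as $\alpha\beta\|\mathrm{Hess}(\varphi)\|_\infty$ rather than your $\tfrac{\alpha\beta(\alpha+\beta)}{2}\|\mathrm{Hess}(\varphi)\|_\infty$, but the argument is identical and the precise constant is immaterial for \eqref{eq:consistency}.
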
 
	\begin{proof}
	Using the same notation as in the statement of Lemma \ref{lem:optgamma}, we have that for all $\varphi \in \C^\infty_c(\mathbb{R}^d)$
	\[
	\int \varphi\, ( \mu_\alpha - \alpha \mu_1 + \beta \mu_0)  = \alpha \int ( \varphi(x_1) - \varphi(x_\alpha)) \ed \gamma_{1,\alpha}(x_1,x_\alpha) - \beta  \int ( \varphi(x_0) - \varphi(x_\alpha)) \ed \gamma_{0,\alpha}(x_0,x_\alpha)\,.
	\]
	Using the Taylor expansion of $\varphi$ at $x_\alpha$ in both integrals on the right-hand side, Lemma \ref{lem:optgamma} and the dissipation property \eqref{eq:betadisalp}, we obtain
	\[
	\begin{aligned}
	\left|  \int \varphi\, ( \mu_\alpha - \alpha \mu_1 + \beta \mu_0)  \right| &\leq \frac{1}{2} \|\mathrm{Hess}\varphi\|_\infty  ( \alpha W^2_2(\mu_1,\mu_{\alpha})  + \beta W^2_2(\mu_0,\mu_{\alpha}) ) \\& \leq  \alpha \beta \|\mathrm{Hess}\varphi\|_\infty  W^2_2(\mu_0,\mu_1). 
	\end{aligned}
	\]
	\end{proof}
	
	\begin{remark}[Relation with pressureless fluids]\label{rem:metricextra_L2}
	In dimension one, the Wasserstein distance $W_2$ coincides with the $L^2$ distance between the quantile functions. In particular, the metric extrapolation $\mu_\alpha$ is given by
	\[
	\mu_\alpha = (G_\alpha)_\# \ed x |_{[0,1]} \,, \quad G_\alpha \coloneqq \underset{\substack{G\in L^2([0,1],\mathbb{R})\\ \text{monotone}}}{\mathrm{argmin}} ~ \alpha \| G - F_1^{[-1]}\|^2_{L^2} - \beta \| G - F_0^{[-1]}\|^2_{L^2}\,,
	\]
	where $F_0^{[-1]}$ and $F_1^{[-1]}$ are the quantiles of $\mu_0$ and $\mu_1$, respectively. The solution to this problem coincides with the sticky particle model described in Remark \ref{rem:pressureless}, i.e.\ $G_\alpha = \tilde{X}(\alpha,\cdot)$ with $\tilde{X}$ as in \eqref{eq:pushenvelope}.
	\end{remark}
	
	\begin{remark}[Dual formulation of the metric extrapolation]\label{rem:metricextra_dual}
	Let us recall that the optimal transport problem \eqref{eq:w2} admits the following dual formulation \cite[Theorem 5.10]{villani2009optimal}:
	\begin{equation}\label{eq:w2dual}
	\frac{W^2(\mu_0,\mu_1)}{2} =	\sup_{\phi_0} \left\{ \int \mc{H}_1(\phi_0) \mu_1 - \int \phi_0  \mu_0 ~:~ \frac{|\cdot|^2}{2} + \phi_0(\cdot) ~ \text{ is convex}\right\} \,,
	\end{equation}
	and if $\mu_0$ is absolutely continuous, this admits a unique maximiser $\phi_0$, and $u(\cdot) \coloneqq \frac{|\cdot|^2}{2} + \phi_0(\cdot)$ is the Brenier potential from $\mu_0$ to $\mu_1$.
	However, the associated geodesic from $\mu_0$ to $\mu_1$ can be extended up to time $\alpha>1$ only if \eqref{eq:convexityextra} holds, or equivalently if 
	\begin{equation}\label{eq:alphaconvex} 
	x\mapsto \frac{|x|^2}{2} + \alpha \phi_0(x) ~~\text{is convex}.
	\end{equation} Therefore, in order to construct an extrapolation, one can instead consider the problem
	\begin{equation}\label{eq:metricextra_dual}
	\sup_{\phi_0} \left\{
	\int \mc{H}_1(\phi_0) \mu_1 - \int \phi_0  \mu_0 ~:~  \frac{|\cdot|^2}{2} + \alpha \phi_0(\cdot) ~ \text{ is convex}
	\right\},
	\end{equation}
	and, if $\mu_0$ is absolutely continuous,  set
	\[
	{\sf E}_\alpha(\mu_0, \mu_1) = (\nabla u_\alpha)_\# \mu_0,
	\]
	where $u_\alpha(\cdot)\coloneqq\frac{|\cdot|^2}{2} + \alpha \phi_0(\cdot)$ and $\phi_0$ solves \eqref{eq:metricextra_dual}.
	This extrapolation is well defined and it turns out to be a dual formulation for the metric extrapolation in the spirit of \cite{carlier2008Toland}. However, even if very natural, this dual point of view was not needed for the results presented here, and therefore it will be developed in a future work.
	
	\end{remark}
	
	\subsection{Extrapolation on bounded domains}\label{sec:boundeddom}
	
	So far we only discussed the extrapolation problem on the whole space $\mc{P}_2(\mathbb{R}^d)$. However, even if the EVBDF2 scheme is well-defined using such extrapolations, it can be convenient for numerical reasons to use an extrapolation operator mapping two measures on $\mc{P}(\Omega)$ to an extrapolated one still in $\mc{P}(\Omega)$.
	As mentioned in Remark \ref{rem:omega}, this  can be achieved easily in the case of the metric extrapolation, since one can simply perform the minimization problem \eqref{eq:metricextraF} over $\mc{P}(\Omega)$ rather than $\mc{P}_2(\mathbb{R}^d)$. It is not difficult to check that all the properties discussed in the previous section hold also with this modification.
	
	In general, a straightforward way of defining an extrapolation operator $\extra^\Omega:\mc{P}(\Omega)\times\mc{P}(\Omega)\rightarrow\mc{P}(\Omega)$ is to compose with a $W_2$ projection. Specifically, given an operator $\extra$ and $\mu_0,\mu_1\in\mc{P}(\Omega)$ we can define:
	\[
	\extra^\Omega(\mu_0,\mu_1)\coloneqq \argmin_{\rho\in\mc{P}(\Omega)} W_2^2(\rho,\extra(\mu_0,\mu_1)) = P_\# \extra(\mu_0,\mu_1)\,,
	\] 
	where $P:\mathbb{R}^d\rightarrow \Omega$ is the Euclidean projection on the convex set $\Omega$. Then, if $\extra$ is $\theta$-dissipative and satisfies the consistency assumption \eqref{eq:consistency}, also $\extra^\Omega$ does. In fact, denoting by $\gamma^*$ the optimal plan from $\mu_1$ to $\extra(\mu_0,\mu_1)$, $(\mathrm{Id},P)_\# \gamma^*\in \Pi(\mu_1,\extra^\Omega(\mu_0,\mu_1))$, and therefore one has
	\[
	\begin{aligned}
	W_2^2(\mu_1,\extra^\Omega(\mu_0,\mu_1))
	&\leq \int_{\mathbb{R}^d\times\mathbb{R}^d} |x-{P}(y)|^2 \d \gamma^*(x,y) \\
	&\le \int_{\mathbb{R}^d\times\mathbb{R}^d} |x-y|^2 \d \gamma^*(x,y) = W_2^2(\mu_1,\extra(\mu_0,\mu_1))\,,
	\end{aligned}
	\]
	which implies that $\extra^\Omega$ is $\theta$-dissipative if so is $\extra$.
	Moreover, $\forall \varphi \in \C^\infty_c(\mathbb{R}^d)$ with $\nabla \varphi \cdot n_{\partial \Omega} =0$ on $\partial \Omega$
	\[
	\begin{aligned}
	\left|\int_{\mathbb{R}^d} \varphi\left(\extra^\Omega(\mu_0,\mu_1)-\extra(\mu_0,\mu_1)\right)\right| 
	&= \left|\int_{\mathbb{R}^d} (\varphi\circ P -\varphi) \extra(\mu_0,\mu_1) \right| \\
	&\le \frac12 ||\text{Hess}(\varphi)||_{\infty} W_2^2(\extra^\Omega(\mu_0,\mu_1),\extra(\mu_0,\mu_1)) \\
	&\le \frac12 ||\text{Hess}(\varphi)||_{\infty} W_2^2(\mu_1,\extra(\mu_0,\mu_1))\,,
	\end{aligned}
	\]
	where to pass from the first to the second line we used a Taylor expansion of $\varphi$ together with the fact that $\nabla \varphi(P(x)) \cdot (P(x) -x ) = 0$ on $\mathbb{R}^d$. Hence, using the $\theta$-dissipativity property, we find that if $\extra$ verifies the consistency assumption for all $\varphi \in \C^\infty_c(\mathbb{R}^d)$, then $\extra^\Omega$ also verifies it for all $\varphi \in \C^\infty_c(\Rd)$ such that $\nabla \varphi \cdot n_{\partial \Omega} =0$ on $\partial \Omega$. As a consequence, the convergence result of Theorem \ref{th:convergencefp} holds also when the operator $\extra^\Omega$ is used in the extrapolation step.

	\section{Convergence in the EVI sense}\label{ssec:EVI}
	
	In this section, we make a further assumption on the energy functional $\Ec$. Besides lower semi-continuity, which ensures well-posedness of the scheme (see Section \ref{sec:LJKO2analysis}) we assume that $\Ec$ is $\lambda$-convex in the generalized geodesic sense on $\mc{P}(\Omega)$, for $\lambda\in\R_+$ (see equation \eqref{eq:lambgenconv}, and recall that $\Omega$ is supposed to be convex, so generalized geodesics with endpoints in $\mc{P}(\Omega)$ are well-defined as curves on $\mc{P}(\Omega)$).
	We recall that a curve $\varrho:[0,T]\rightarrow\PcO$, $\varrho(0)=\rho_0$, is a Wasserstein gradient flow in the EVI sense if for any $\nu\in\PcO$ it holds
	\begin{equation}\label{eq:evi}
	\frac{\d}{\d t} \frac{1}{2} \Wc_2^2(\varrho(t),\nu) \le \Ec(\nu)-\Ec(\varrho(t)) - \frac{\lambda}{2} \Wc_2^2(\varrho(t),\nu), \quad \forall t\in(0,T) \,,
	\end{equation}
	or, equivalently,  if for all $r,s\in(0,T)$ with $r\le s$ it holds
	\begin{equation}\label{eq:evi_integral}
	\frac{1}{2} \Wc_2^2(\varrho(s),\nu) -\frac{1}{2} \Wc_2^2(\varrho(r),\nu)\le \Ec(\nu)(s-r)-\int_{r}^{s} \Big(\Ec(\varrho(t)) + \frac{\lambda}{2} \Wc_2^2(\varrho(t),\nu)\Big) \d t \,.
	\end{equation}
	In this section, we show that the limit curve extracted from the time discretization \eqref{eq:bdf2metric} using the metric extrapolation \eqref{eq:metricextraF} (defined on either $\mc{P}(\Omega)$ or $\mc{P}_2(\mathbb{R}^d)$) satisfies the inequality \eqref{eq:evi_integral}.
	
	We first show that for scheme \eqref{eq:bdf2metric}-\eqref{eq:metricextraF} a discrete version of the inequality \eqref{eq:evi_integral} holds. 
	As the Wasserstein distance $\Wc_2^2(\cdot,\rhoab_{n-1})$ is $2$-convex along any generalized geodesic based in $\rhoab_{n-1}$ (see, e.g., the proof of Lemma \ref{lem:abconvex}), the overall functional 
	\begin{equation}
	\Gc(\rho_{n-1},\rho_{n-2};\rho) = \frac{\Wc_2^2(\rho,\rhoab_{n-1})}{2(1-\beta) \tau} + \Ec(\rho) \,,
	\end{equation}
	is $\frac{1}{(1-\beta) \tau}+\lambda>0$ convex along any generalized geodesic on $\mc{P}(\Omega)$ based in $\rhoab_{n-1}$.
	Note that in order to consider the case $\lambda<0$ one should explicitly add a restriction on the time step $\tau$ so that $\frac{1}{(1-\beta) \tau}+\lambda>0$.
	
	\begin{lemma}
	At each step $n$, for all $ \nu \in \Pc(\Omega)$, the following inequality holds:
	\begin{multline}\label{eq:evi_discrete}
	\Big(\frac{1}{2(1-\beta) \tau}+\frac{\lambda}{2}\Big)\Wc^2_2(\rho_{n},\nu) - \alpha \frac{\Wc_2^2(\nu,\rho_{n-1})}{2(1-\beta)\tau} +\beta \frac{\Wc_2^2(\nu,\rho_{n-2})}{2(1-\beta)\tau} \\ \le \Ec(\nu)-\Ec(\rho_{n}) +\alpha\beta \frac{W^2_2(\rho_{n-1},\rho_{n_2})}{2(1-\beta)\tau} -\frac{\Wc_2^2(\rho_{n},\rhoab_{n-1})}{2(1-\beta)\tau}  \,.
	\end{multline}
	\end{lemma}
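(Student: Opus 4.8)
The plan is to treat the inner minimization in \eqref{eq:bdf2metric} as a single JKO step and exploit its strong convexity along generalized geodesics, combined with the variational structure of the metric extrapolation. Concretely, I would use two ingredients: first, that the functional $\Gc(\rho_{n-1},\rho_{n-2};\cdot)$ is $\bigl(\tfrac{1}{(1-\beta)\tau}+\lambda\bigr)$-convex along generalized geodesics based at $\rhoab_{n-1}$ (noted just before the statement, inherited from the $2$-convexity of $\Wc_2^2(\cdot,\rhoab_{n-1})$ and the $\lambda$-convexity of $\Ec$), which yields a \emph{flow-interchange} estimate of EVI type for that step; and second, the inequality \eqref{eq:rhoab_ineq} of Proposition \ref{prop:existence} together with the lower bound \eqref{eq:boundbelow}, applied to $\rhoab_{n-1}=\extra(\rho_{n-2},\rho_{n-1})$, which will let me trade $\Wc_2^2(\nu,\rhoab_{n-1})$ for the combination of $\Wc_2^2(\nu,\rho_{n-1})$, $\Wc_2^2(\nu,\rho_{n-2})$ and $\Wc_2^2(\rho_{n-1},\rho_{n-2})$ that appears in \eqref{eq:evi_discrete}.

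First I would fix $\nu\in\PcO$ and pick a generalized geodesic $\omega:[0,1]\to\PcO$ based at $\rhoab_{n-1}$ with $\omega(0)=\rho_n$ and $\omega(1)=\nu$; this curve stays in $\PcO$ by convexity of $\Omega$ (whether $\rhoab_{n-1}$ lies in $\PcO$ or only in $\Pc_2(\Rd)$), so $\Ec(\omega(t))$ is finite and the convexity inequality applies along it. Using the optimality of $\rho_n$ in \eqref{eq:bdf2metric}, the generalized-geodesic convexity of $\Gc(\rho_{n-1},\rho_{n-2};\cdot)$, and the bound $\int|x_1-x_2|^2\,\d\gamma\ge\Wc_2^2(\rho_n,\nu)$ (which preserves the direction of the inequality since the convexity modulus is positive), I obtain, for all $t\in(0,1)$,
\[
\Gc(\rho_{n-1},\rho_{n-2};\rho_n)\le(1-t)\,\Gc(\rho_{n-1},\rho_{n-2};\rho_n)+t\,\Gc(\rho_{n-1},\rho_{n-2};\nu)-\Bigl(\tfrac{1}{2(1-\beta)\tau}+\tfrac{\lambda}{2}\Bigr)t(1-t)\,\Wc_2^2(\rho_n,\nu).
\]
Subtracting $(1-t)\Gc(\rho_{n-1},\rho_{n-2};\rho_n)$, dividing by $t$ and letting $t\to0^+$ then yields the flow-interchange estimate
\[
\Bigl(\tfrac{1}{2(1-\beta)\tau}+\tfrac{\lambda}{2}\Bigr)\Wc_2^2(\rho_n,\nu)+\frac{\Wc_2^2(\rho_n,\rhoab_{n-1})}{2(1-\beta)\tau}+\Ec(\rho_n)\le\frac{\Wc_2^2(\nu,\rhoab_{n-1})}{2(1-\beta)\tau}+\Ec(\nu).
\]

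Next I would bound the term $\Wc_2^2(\nu,\rhoab_{n-1})$ on the right. Applying \eqref{eq:rhoab_ineq} with $\mu_0=\rho_{n-2}$, $\mu_1=\rho_{n-1}$, $\mu_\alpha=\rhoab_{n-1}$ and test measure $\mu=\nu$ gives $\Wc_2^2(\nu,\rhoab_{n-1})\le\Fc(\rho_{n-2},\rho_{n-1};\nu)-\Fc(\rho_{n-2},\rho_{n-1};\rhoab_{n-1})$, while \eqref{eq:boundbelow} bounds $-\Fc(\rho_{n-2},\rho_{n-1};\rhoab_{n-1})\le\alpha\beta\,\Wc_2^2(\rho_{n-1},\rho_{n-2})$. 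Since $\Fc(\rho_{n-2},\rho_{n-1};\nu)=\alpha\Wc_2^2(\nu,\rho_{n-1})-\beta\Wc_2^2(\nu,\rho_{n-2})$ by the definition \eqref{eq:metricextraF}, this produces
\[
\Wc_2^2(\nu,\rhoab_{n-1})\le\alpha\,\Wc_2^2(\nu,\rho_{n-1})-\beta\,\Wc_2^2(\nu,\rho_{n-2})+\alpha\beta\,\Wc_2^2(\rho_{n-1},\rho_{n-2}).
\]
Inserting this into the flow-interchange estimate and rearranging (moving the $\rho_{n-1},\rho_{n-2}$ distance terms to the left and $\frac{\Wc_2^2(\rho_n,\rhoab_{n-1})}{2(1-\beta)\tau}$, $\Ec(\rho_n)$ to the right) gives exactly \eqref{eq:evi_discrete}. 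I do not expect a genuine obstacle here; the only points requiring care are the generalized-geodesic convexity of the composite functional $\Gc(\rho_{n-1},\rho_{n-2};\cdot)$, the fact that the competitor curve $\omega$ remains in $\PcO$ along which $\Ec$ is finite and the estimate applies, and keeping track of the sign of the convexity modulus when replacing $\int|x_1-x_2|^2\,\d\gamma$ by $\Wc_2^2(\rho_n,\nu)$.
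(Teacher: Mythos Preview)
Your proposal is correct and follows essentially the same approach as the paper: derive the EVI-type inequality for the JKO step from the $\bigl(\tfrac{1}{(1-\beta)\tau}+\lambda\bigr)$-convexity of $\Gc$ along generalized geodesics based at $\rhoab_{n-1}$, then replace $\Wc_2^2(\nu,\rhoab_{n-1})$ using \eqref{eq:rhoab_ineq} and \eqref{eq:boundbelow}. The paper organizes the second step by adding $-\tfrac{1}{2(1-\beta)\tau}\Fc(\rho_{n-1},\rho_{n-2};\rhoab_{n-1})$ to both sides rather than substituting directly, but this is a cosmetic difference.
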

	\begin{proof}
	By the discussion above, considering the generalized geodesic $\omega$ between $\nu$ and $\rho_{n}$ with base $\rhoab_{n-1}$, and using the optimality of $\rho_{n}$, we obtain
	\[
	\begin{aligned}
	0&\le \Gc(\rho_{n-1},\rho_{n-2};\omega(t)) - \Gc(\rho_{n-1},\rho_{n-2};\rho_{n})  \\
	&\le t (\Gc(\rho_{n-1},\rho_{n-2};\nu) - \Gc(\rho_{n-1},\rho_{n-2};\rho_{n})) -\frac{1}{2}\Big(\frac{1}{(1-\beta) \tau}+\lambda\Big)t(1-t)\Wc^2_2(\rho_{n},\nu).
	\end{aligned}
	\]
	Dividing by $t$ and taking the limit $t\rightarrow0$, this yields
	\[
	\Big(\frac{1}{2(1-\beta) \tau}+\frac{\lambda}{2}\Big)\Wc^2_2(\rho_{n},\nu) -\frac{\Wc_2^2(\nu,\rhoab_{n-1})}{2(1-\beta)\tau}  \le \Ec(\nu)-\Ec(\rho_{n})-\frac{\Wc_2^2(\rho_{n},\rhoab_{n-1})}{2(1-\beta)\tau}  \,.
	\]
	Adding on both side the term $-\frac{1}{2(1-\beta)\tau}\Fc(\rho_{n-1},\rho_{n-2};\rhoab_{n-1})$, using \eqref{eq:rhoab_ineq} on the left-hand side, we obtain
	\begin{multline*}
	\Big(\frac{1}{2(1-\beta) \tau}+\frac{\lambda}{2}\Big)\Wc^2_2(\rho_{n},\nu) - \alpha \frac{\Wc_2^2(\nu,\rho_{n-1})}{2(1-\beta)\tau} +\beta \frac{\Wc_2^2(\nu,\rho_{n-2})}{2(1-\beta)\tau} \\ \le \Ec(\nu)-\Ec(\rho_{n}) -\frac{1}{2(1-\beta)\tau}\Fc(\rho_{n-1},\rho_{n-2};\rhoab_{n-1}) -\frac{\Wc_2^2(\rho_{n},\rhoab_{n-1})}{2(1-\beta)\tau}  \,.
	\end{multline*}
	
	Finally, using \eqref{eq:boundbelow} on the right-hand side we conclude.
	\end{proof}
	
	\begin{proof}[Proof of Theorem \ref{th:convergenceevi}]
	We recall that thanks to the classical estimate \eqref{eq:bound_W} (Lemma \ref{lem:bound_W}), the piecewise constant curve
	\[
	\rho_{\tau}(t) = \sum_{n=1}^{N} \rho_{n-1} \indf_{(t_{n-1},t_{n}]} \,, \quad \rho_{\tau}(0) = \rho_0 \,,
	\]
	converges uniformly in the $\Wc_2$ distance to an absolutely continuous limit curve $\rhoe:[0,T]\rightarrow \mc{P}(\Omega)$ (see Proposition \ref{prop:conv_rho}).
	In order to prove convergence of the scheme in the EVI sense, we show that this curve satisfies inequality \eqref{eq:evi_integral}. Thanks to the uniform convergence in time, the procedure is the same as in \cite[Theorem 5.1]{Matthes2019bdf2}.
	
	For simplicity, assume that given $r,s\in(0,T), r\le s$, there exist $N_{\tau},M_{\tau}\in\mb{N}, N_{\tau}\le M_{\tau}$, such that $r=N_{\tau} \tau, s=M_{\tau} \tau$, $\forall \tau$.
	We multiply by $\tau$ inequality \eqref{eq:evi_discrete} and sum over $n$ from $N_{\tau}$ to $M_{\tau}$ to obtain the discrete integral form of the EVI:
	\begin{multline}\label{eq:EVIt}
	\frac{1}{2(1-\beta)} 	\sum_{n=N_{\tau}}^{M_{\tau}}\left( \Wc_2^2(\rho_{n},\nu) -\alpha \Wc_2^2(\nu,\rho_{n-1}) +\beta \Wc_2^2(\nu,\rho_{n-2}) \right)  \\ 
	\le \Ec(\nu)(t-s)-\sum_{n=N_{\tau}}^{M_{\tau}} \tau \Big(\Ec(\rho_{n})+\frac{\lambda}{2}\Wc_2^2(\rho_{n},\nu)\Big) \\
	+\frac{1}{2(1-\beta)} \sum_{n=N_{\tau}}^{M_{\tau}} \left( \alpha \beta \Wc_2^2(\rho_{n-1},\rho_{n-2})-\Wc_2^2(\rho_{n},\rhoab_{n-1})\right) .
	\end{multline}
	By canceling out terms, the left-hand side is equal to
	\begin{multline}\label{eq:EVIt_left}
	\frac{1}{2(1-\beta)} 	\left(-\alpha \Wc_2^2(\nu,\rho_{N_{\tau}-1}) + \beta \Wc_2^2(\nu,\rho_{N_{\tau}-2})
	+ \beta \Wc_2^2(\nu,\rho_{N_{\tau}-1}) \right. \\ \left.+\Wc_2^2(\rho_{M_{\tau}-1},\nu) +\Wc_2^2(\rho_{M_{\tau}},\nu) -\alpha \Wc_2^2(\nu,\rho_{{M_{\tau}-1}})\right) \,,
	\end{multline}
	and thanks to the uniform convergence in the Wasserstein distance, \eqref{eq:EVIt_left} converges to
	\[
	\frac{1}{2} \Wc_2^2(\varrho(s),\nu) -\frac{1}{2} \Wc_2^2(\varrho(r),\nu) \,,
	\]
	for $\tau\rightarrow0$, where we recall $\alpha-\beta =1$.
	Concerning the right-hand side, thanks again to the uniform convergence in the Wasserstein distance, the lower semi-continuity of $\Ec$ and Fatou's lemma, we have
	\[
	\limsup_{n\rightarrow\infty} -\sum_{n=N_{\tau}}^{M_{\tau}} \tau \Big(\Ec(\rho_{n})+\frac{\lambda}{2}\Wc_2^2(\rho_{n},\nu)\Big) \le 
	-\int_{r}^{s} \Big(\Ec(\rhoe(t)) + \frac{\lambda}{2} \Wc_2^2(\rhoe(t),\nu)\Big) \d\t \,.
	\]
	Finally, owing to bound \eqref{eq:bound_W}, we estimate the last contribution of \eqref{eq:EVIt} as
	\[
	\sum_{n} \alpha \beta \Wc_2^2(\rho_{n-1},\rho_{n-2})-\Wc_2^2(\rho_{n},\rhoab_{n-1}) \le \sum_{n} \alpha \beta  \Wc_2^2(\rho_{n-1},\rho_{n-2}) \le C \tau,
	\]
	which converges to zero. As a consequence, we recover the continuous inequality \eqref{eq:evi_integral}.
	
	\end{proof}

	\section{Finite volume discretization}\label{sec:LJKO2scheme}
	
	In this section we describe a space-time discretization of the proposed approach which yields numerically second-order accuracy both in space and time. 
	We consider a discretization in the Eulerian framework of finite volumes. In this setting, neither the free-flow extrapolation nor the metric one have a straightforward implementation. For this reason, we will construct a discrete extrapolation operator based on formula \eqref{eq:varextra}: in this way the extrapolation step is cast in a variational way allowing for a robust implementation. Although not satisfying the hypotheses of theorem \eqref{th:convergencefp}, this choice leads to a convergent and second order accurate scheme, as we will show numerically.
	As explained in Remark \ref{rem:viscosity+continuity}, the variational step \eqref{eq:varextra} differs from the direct forward integration of the continuity equation. This latter is a viable alternative to define a discrete extrapolation and leads to second order accuracy as well (see \cite{todeschi2021finite}), but it is not clear how to discretize this in a robust way.
	
	The fundamental tool is the solution of JKO steps, which requires the expensive problem of computing the Wasserstein distance. Following \cite{cances2020LJKO,natale2020FVCA}, we linearize the Wasserstein distance obtaining LJKO steps, a more affordable problem to solve. Remarkably, this approach preserves the second order accuracy in time of our time discretization. The discretization in space is based instead on Two-Point Flux Approximation (TPFA) finite volumes with a centered choice for the mobility, which leads to simple and flexible schemes which are second order accurate in space.

	\subsection{Discrete setting}
	
	TPFA finite volumes require a sufficiently regular partitioning of the domain $\Om$, according to \cite[Definition 9.1]{EGH00}.
	For simplicity, we describe the methodology in two dimensions only, although generalizations to arbitrary dimensions are possible, and for $\Om\subset \mathbb{R}^2$ being a polygonal domain.
	The discretization of $\Om$ consists of three sets: the set of cells $K\in\Cs$; the set of edges $\sigma\in\Es$, which is composed of the two subsets of internal edges $\EsIn$ and external edges $\Es\setminus\EsIn$; the set of cell centers ${(\x_K)}_{K\in\Cs}$.
	We will denote the finite volume mesh as $\left(\Cs,\Es, {(\x_K)}_{K\in\Cs}\right)$.
	The fundamental regularity hypothesis we need to construct TPFA schemes is the orthogonality between each internal edge $\sigma=K|L\in\EsIn$ and the segment $\x_L-\x_K$.
	Typical example of meshes that can be used to this end are Cartesian grids, Voronoi tessellations and Delaunay triangulations, by taking the circumcenters of the polygonal cells as cell centers.
	
	For each cell $K\in\Cs$, we denote $\EsK$ and $\EsInK$ the subsets of edges and internal edges belonging to $K$, and by $m_K$ the measure of the cell. The mesh size $h$ is the largest among all cells' diameters, $h\coloneqq \max_{K\in\Cs} \text{diam}(K)$, and characterizes the refinement of the mesh. For every internal edge, the diamond cell $\Delta_\edge$ is the quadrilateral with vertices given by the cell centers, $\x_K$ and $\x_L$, and the vertices of the edge. Denoting by $d_{\edge}\coloneqq |\x_L-\x_K|$ and $m_\sigma$ the measure of the edge, the measure of the diamond cell is equal to $m_{\Delta_\edge}=\frac{m_\edge d_\edge}{d}$, where $d$ stands for the space dimension. Finally, we denote by $d_{K,\edge}$ the Euclidean distance between the cell center $\x_K$ and the midpoint of the edge $\edge\in\EsK$, and by $\n_{K,\edge}$ the outward unit normal of the cell $K$ on the edge $\edge$.
	
	The finite volume methodology introduces two levels of discretization, on cells and edges.
	The first one is used to discretize scalar quantities whereas the second one for vectorial ones. To this end, we introduce three discrete inner product spaces $(\R^{\Cs},\langle \cdot, \cdot \rangle_{\Cs}), \, (\R^{\EsIn},\langle \cdot, \cdot \rangle_{\EsIn})$ and $(\Fs,\langle \cdot, \cdot \rangle_{\mb{F}_{\Cs}})$. The scalar products $\langle \cdot, \cdot \rangle_{\Cs}$ and $\langle \cdot, \cdot \rangle_{\EsIn}$ are defined as
	\[
	\begin{aligned}
	&\langle \cdot, \cdot \rangle_{\Cs}: (\boldsymbol{a},\boldsymbol{b})\in [\R^{\mathcal{T}}]^2 \mapsto \sum_{K\in\mathcal{T}} a_K b_K m_K \,, \\
	&\langle \cdot, \cdot \rangle_{\EsIn}: (\boldsymbol{u},\boldsymbol{v})\in [\R^{\Sigma}]^2 \mapsto \sum_{\sigma\in\Sigma} u_{\sigma} v_{\sigma} m_{\sigma} d_{\sigma} \,.
	\end{aligned}
	\]
	The space $\Fs$ is the space of conservative fluxes, it is defined by
	\begin{equation*}\label{eq:LJKO2space_fluxes}
	\Fs=\{\boldsymbol{F}=(F_{K,\sigma},F_{L,\sigma})_{\sigma\in\Sigma}\in\mb{R}^{2\Sigma}: F_{K,\sigma}+F_{L,\sigma}=0\} \,,
	\end{equation*}
	and its scalar product is
	\[
	\langle \cdot, \cdot \rangle_{\mb{F}_{\Cs}}: (\bs{F},\bs{G})\in [\mb{F}_{\Cs}]^2 \mapsto \sum_{\sigma\in\Sigma} (F_{K,\sigma} G_{K,\sigma}+F_{L,\sigma} G_{L,\sigma}) \frac{m_{\sigma}d_{\sigma}}{2} \,.
	\]
	Note that the space $\Fs$ is defined on internal edges only. This is sufficient, since we are dealing with no flux boundary value problems, and therefore we can neglect the flux variables on the boundary.
	We denote $F_{\sigma} = |F_{K,\sigma}| = |F_{L,\sigma}|$ the modulus of the flux on each internal edge $\sigma=K|L\in\EsIn$ and, by convention, $|\bF| = (F_{\sigma})_{\sigma\in\Sigma} \in \mb{R}^{\Sigma}$ and $|\bF|^2 = (F_{\sigma}^2)_{\sigma\in\Sigma} \in \mb{R}^{\Sigma}$, for $\bF\in\mb{F}_{\Cs}$.
	
	According to finite volumes, the discrete divergence operator $\mathrm{div}_{\Cs}: \mb{F}_\Cs \rightarrow \R^\Cs$ is defined in an integral sense as
	\[
	(\mathrm{div}_{\Cs} \bs{F})_K \coloneqq \mathrm{div}_{K} \bs{F} \coloneqq  \frac{1}{m_K} \sum_{\sigma\in\Sigma_K} F_{K,\sigma} m_{\sigma}\,,
	\]
	that is, for each cell, the discrete divergence is computed as the sum of the fluxes across its boundary. 
	The discrete gradient $\nabla_\Sigma: \mb{R}^{\Cs} \rightarrow \mb{F}_\Cs$ is defined by duality, requiring that $\langle\nabla_\Sigma \bs{a}, \bs{F} \rangle_{\mb{F}_\Cs}= - \langle \bs{a} , \mathrm{div}_\Cs \bs{F}\rangle_{\Cs}$, for all $\bs{a}\in\R^{\Cs}$ and $\bs{F}\in\Fs$. Then, it holds
	\[
	(\nabla_\Sigma \bs{a})_{K,\sigma} \coloneqq \mathrm{\nabla}_{K,\sigma} \bs{a} \coloneqq \frac{a_L -a_K}{d_\sigma} \,.
	\]
	Both the discrete divergence and gradient operators automatically inherit the zero flux boundary condition from the definition of $\Fs$.
	
	The space $(\R^{\EsIn},\langle \cdot, \cdot \rangle_{\EsIn})$ is introduced in order to match the two different discretizations on cells and edges. In order to reconstruct variables defined on cells to the edges, and vice-versa, we need two reconstruction operators.
	We use a centered reconstruction for the mobility in order to attain the second order accuracy in space.
	To this end, we use the weighted arithmetic average operator $\Lc_{\EsIn}:\R^{\Cs}\rightarrow\R^{\EsIn}$ and its adjoint $\Lc^*_{\EsIn}:\R^{\EsIn}\rightarrow\R^{\Cs}$ (with respect to the two scalar products):
	\begin{equation}\label{eq:reconstruction}
	(\Lc_\Sigma \bs{a})_{\sigma}\coloneqq \lambda_{K,\sigma} a_K +\lambda_{L,\sigma} a_L \,, \quad ( \Lc_\Sigma^* \bs{u})_K \coloneqq \sum_{\sigma\in\Sigma_K} \lambda_{K,\edge} u_{\sigma} \frac{m_{\sigma}d_{\sigma}}{m_K}\,,
	\end{equation}
	for $\bs{a}\in\R^{\Cs}$ and $\bs{u}\in\R^{\EsIn}$, with $\lambda_{K,\sigma}+\lambda_{L,\sigma}=1, \forall \edge=K|L\in\EsIn$. Two possible choices for the weights are $(\lambda_{K,\sigma},\lambda_{L,\sigma}) = (\frac{d_{K,\sigma}}{d_{\sigma}},\frac{d_{L,\sigma}}{d_{\sigma}})$ or $(\frac{1}{2},\frac{1}{2})$, both leading to second order accurate schemes in space \cite{natale2020FVCA}. The former choice is possible only if $\x_K\in K$, which may not be always the case for arbitrary admissible meshes.
	
	\begin{remark}
	The definition of the reconstruction operators and the choice of weights may be delicate in general for the discretization of dynamical optimal transport, depending on the discretization chosen for $\Om$. See \cite{gladbach2018scaling,natale2021computation} for details. Notice in particular that the choice $(\lambda_{K,\sigma},\lambda_{L,\sigma})=(\frac12,\frac12)$ may lead to convergence failure in very simple settings \cite[Section 5]{gladbach2018scaling}. Nevertheless, in the context of the discretization of Wasserstein gradient flows the definition of the reconstruction is more flexible, see \cite{cances2020LJKO,forkert2020evolutionary}.
	\end{remark}

	\subsection{Discrete ${\stackrel{.}{H}}{}^{-1}$ norm}
	
	As suggested in \cite{lavenant2018dynamical,erbar2020computation,natale2021computation}, a convenient choice for the time discretization of the Wasserstein distance \eqref{eq:w2dynamic} is to use a staggered time discretization for the velocity and the density on subintervals of the time interval $[0,1]$, and reconstruct the density on intermediate steps via arithmetic average. It has been shown numerically in \cite{cances2020LJKO,natale2020FVCA} that a single step discretization on the whole interval is sufficient in order to preserve the first-order accuracy of the JKO scheme \eqref{eq:jko}.
	Following the same ideas, here we approximate the Wasserstein distance between two measures $\mu,\nu\in\PcO$ as 
	\begin{equation}\label{eq:Hm1norm}
	\frac{1}{2}	W_2^2(\mu,\nu)
	\approx \sup_{\phi} \intO \phi(\mu-\nu)-\frac{1}{2}\intO \Big(\frac{\mu+\nu}{2}\Big) |\nabla \phi|^2 \,.
	\end{equation}
	Formula \eqref{eq:Hm1norm} is obtained by discretizing in one step problem \eqref{eq:w2dynamic} and by applying a duality result thanks to the change of variables $(\omega,v)\mapsto(\omega,\omega v)$. For more details on this construction see \cite{cances2020LJKO,natale2020FVCA}. This approximation consists in replacing the Wasserstein distance with the weighted dual norm $\frac{1}{2}||\mu-\nu||_{\dot{H}_{\frac{\mu+\nu}{2}}^{-1}}$. The choice of the arithmetic average of the two measures as weight is fundamental in order to achieve second order accuracy in time for the scheme we will propose in the following.
	
	Using the finite volume discretization introduced above we can provide a discrete analogous of the weighted norm.
	Given the discrete measures $\bmu,\bnu\in\R^{\Cs}_+$ and for any $\bh\in\R^{\Cs}$, the discrete counterpart of the weighted $\dot{H}^{-1}$ norm squared is
	\begin{equation}\label{eq:Adiss_dual}
	\Ac_{\Cs}\Big(\frac{\bmu+\bnu}{2};\bh\Big) \coloneqq \sup_{\bphi\in\R^{\Cs}} \langle \bh, \bphi \rangle_{\Cs}- \frac{1}{2} \Big\langle \Lc_\Sigma \Big(\frac{\bmu+\bnu}{2}\Big), |\grad_\EsIn \bphi|^2 \Big\rangle_{\EsIn} \,.
	\end{equation}
	A few remarks are in order about such a discretization.
	\begin{itemize}
	\item For any $\brho\in\R^{\Cs}_+$, the function $\Ac_{\Cs}(\brho;\cdot)$ is proper, convex and lower semi-continuous as supremum of convex and lower semi-continuous functions.
	\item The supremum is unbounded if the condition $\langle \bh , \one \rangle_{\Cs}=0$ is not satisfied. On other hand, if $\langle \bh , \one \rangle_{\Cs}=0$, there exists a maximizer $\bphi$, which is however not uniquely defined, since the function maximised in \eqref{eq:Hm1norm} is invariant with respect to addition of a global constant or perturbations sufficiently far from the support of $\bh$, $\bmu$, and $\bnu$.
	\item Setting $\bh = \bnu - \bmu$ in \eqref{eq:Adiss_dual}, with $\bmu$ and $\bnu$ being a discrete approximation of two measures $\mu$ and $\nu$, we obtain a discrete version of $W_2^2(\mu,\nu)/2$. In this case the optimal potential $\bphi$
	can be interpreted as a discrete counterpart of a continuous optimal potential $\phi$, satisfying the Hamilton-Jacobi equation on the time interval $[0,1]$, evaluated at time $1/2$.
	\item The total kinetic energy is discretized on the diamond cells. 
	Notice that due to the definition of the scalar product $\langle\cdot,\cdot\rangle_{\EsIn}$, the measure of each diamond cell is taken $m_{\edge} d_{\edge} = d m_{\Delta_{\edge}}$, i.e. $d$ times the actual measure. This is done in order to compensate for the unidirectional discretization, since each term $|\nabla_{K,\sigma} \bphi|$ is meant as an approximation of the quantity $|\nabla \phi \cdot\n_{K,\edge}|$, and have a consistent discretization. See \cite{natale2021computation} for more details on this construction.
	\end{itemize}

	\subsection{Discrete extrapolation}\label{sec:discreteextra}
	
	We now construct a discrete version of the extrapolation operator $\extra$ at time $\alpha$, by discretizing the procedure described in Section \ref{sec:collisions}, and in particular of equation \eqref{eq:varextra}. The proposed strategy requires three subsequent steps: i) compute the interpolation between the two measures; ii) integrate forward in time the optimal potential; and finally iii) solve a JKO step. 
	
	Let us consider two discrete densities $\bmu,\bnu\in\R^{\Cs}_+$ with the same total discrete mass $\langle \bmu, \one \rangle_{\Cs} =  \langle \bnu, \one \rangle_{\Cs}$.
	The first step requires to solve problem \eqref{eq:Adiss_dual} for $\bh=\bnu-\bmu$ in order to find an optimal potential ${\bphi}$, which approximates the continuous one, solution to the Hamilton-Jacobi equation \eqref{eq:hj}, at the midpoint of the time interval $[0,1]$.

	In the second step, we evolve the optimal potential according to the Hamilton-Jacobi equation until the final time $\alpha$, that is considering a temporal step of length $\frac{1}{2}+\beta=\frac{\alpha+\beta}{2}$. This can be done with an explicit Euler step as follows:
	\begin{equation}\label{eq:HJevolve2}
	\bphiab = {\bphi} - \frac{2}{\alpha+\beta}\frac{1}{2} \Lc^*_{\EsIn} |\grad_\EsIn {\bphi}|^2 \,.
	\end{equation}
	Note that we use the operator $\Lc_{\EsIn}^*$ to reconstruct the square of the gradient of the potential. However, as this step is not variational, it is not mandatory to use the adjoint of the reconstruction $\Lc_{\EsIn}$ and any other (second order) strategy can be adopted.
	
	Finally, for the third step, we approximate problem \eqref{eq:varextra} using again the discrete weighted $\dot{H}^{-1}$ norm. Specifically, we define a discrete extrapolation operator as a map $\extra^{\Cs}: \R^{\Cs}_+\times \R^{\Cs}_+\rightarrow \R^{\Cs}_+$ verifying
	\begin{equation}\label{eq:rhoabLJKO}
	\extra^{\Cs}(\bmu,\bnu) \in \argmin_{\brho\in\R^{\Cs}_+} \frac{1}{\alpha} \Ac_{\Cs} \Big(\frac{\brho+\bmu}{2};\bmu-\brho\Big) - \langle \bphiab, \brho \rangle_{\Cs} \,,
	\end{equation}
	for all $\bmu,\bnu\in \mathbb{R}^\Tc_+$ and where $\bphiab$ is given by equation \eqref{eq:HJevolve2}.
	Due to the definition of $\Ac_{\Cs}$, any solution $\brho$ satisfies $\langle \brho, \one \rangle_{\Cs} =  \langle \bnu, \one \rangle_{\Cs}$. However, since ${\bphi}$ is in general not unique, in order to specify a discrete extrapolation operator one needs to select a specific optimal potential for any $\bmu,\bnu\in \mathbb{R}^\Tc_+$.

	\subsection{A space-time discrete \scheme\ scheme}
	
	We can finally formulate our second order finite volume scheme. Consider a convex discrete energy function $\Ec_{\Cs}:\R^{\Cs}\rightarrow\R$ and the two initial densities $\brho_0,\brho_1\in\R^{\Cs}_+$, with the same total discrete mass.
	We define the subspace of discrete probability measures $\PCs\subset\R^{\Cs}$ as
	\[
	\PCs = \{ \brho\in\R^{\Cs}_+:  \langle \brho, \one \rangle_{\Cs} =  \langle \brho_0, \one \rangle_{\Cs} \} \,.
	\]
	For the time step $\tau>0$, we compute the sequence of densities $(\brho_{n})_{n\ge2}\subset\PCs$ defined by the following recursive scheme:
	\vspace{0.5em}
	\begin{equation}\label{eq:LJKO2th}
	\left\{
	\begin{aligned}
	&\brho^\alpha_{n-1}=\extra^{\Cs}(\brho_{n-2},\brho_{n-1})\,, \\[0.5em]
	&\brho_n\in\argmin_{\brho\in\R^{\Cs}_+} \frac{1}{\tau(1-\beta)} \Ac_{\Cs} \Big(\frac{\brho+\brhoab_{n-1}}{2};\brhoab_{n-1}-\brho\Big) + \Ec_{\Cs}(\brho)\,.
	\end{aligned}
	\right.
	\end{equation}
	The LJKO step in \eqref{eq:LJKO2th} is a well posed convex optimization problem. Uniqueness of the solution at each step is guaranteed if $\Ec_{\Cs}$ is strictly convex. Moreover, due to the definition of $\Ac_{\Cs}$, any solution $\brho$ belongs to $\mb{P}_{\Cs}$.

	\begin{remark}[Efficient implementation via the interior method]
	Problem \eqref{eq:rhoabLJKO} and the LJKO step in \eqref{eq:LJKO2th} can be solved efficiently thanks to an interior point algorithm, as suggested in \cite{natale2020FVCA} (see also \cite{natale2021computation,facca2022efficient}).
	This implies that the density will be always strictly greater than zero, up to the tolerance set for the solver.
	Hence, one can compute the solution ${\bphi}$, required to define $\extra^\Tc$, solving directly the linear system given by the optimality condition of problem \eqref{eq:Adiss_dual}:
	\begin{equation}\label{eq:Adiss_dual_OC}
	\bnu-\bmu + \mathrm{div}_{\Cs} (\Lc \Big(\frac{\bmu+\bnu}{2}\Big) \odot \grad {\bphi}) = 0 \,,
	\end{equation}
	where $\odot$ denotes the component-wise product, which has then a unique solution defined up to a global additive constant.
	\end{remark}
	
	\subsection{Other implementations}\label{ssec:others_order2}
	
	We now propose a discrete version of the extrapolation-based version of the VIM scheme \eqref{eq:VIM2} and the BDF2 scheme \eqref{eq:BDF2_mat} within the same TPFA finite volume setting introduced above.
	We will study these numerically in Section \ref{sssec:1dconvergence} by comparing their solutions to the solutions provided by scheme \eqref{eq:LJKO2th} on one-dimensional test cases.
	
	Our formulation of the VIM scheme  \eqref{eq:VIM2} requires solving a JKO step with time step $\frac{\tau}{2}$ and then computing the $2$-extrapolation. Using the tools introduced above, in the discrete setting this can be formulated as follows. Given the initial density $\brho_0\in\PCs$ and a time step $\tau>0$, construct the sequence of densities $(\brho_{n})_{n\ge1}\subset\PCs$ by solving at each step $n$
	\begin{equation}\label{eq:VIMdiscrete}
	\left\{
	\begin{aligned}
	&\brho_{n-\frac{1}{2}} \in\argmin_{\brho\in\R^{\Cs}_+} \frac{2}{\tau} \Ac_{\Cs} \Big(\frac{\brho+\brho_{n-1}}{2};\brho_{n-1}-\brho\Big) + \Ec_{\Cs}(\brho) \,,\\
	&\brho_n={\sf E}^{\Cs}_2(\brho_{n-1},\brho_{n-\frac{1}{2}})\,.
	\end{aligned}
	\right.
	\end{equation}
	As before, the discrete LJKO steps can be computed thanks to an interior point algorithm.
	From a computational point of view, this scheme is cheaper to compute than \eqref{eq:LJKO2th}, as in this case the value of the optimal potential in the discrete weighted $\dot{H}^{-1}$ norm from $\brho_{n-1}$ to $\brho_{n-\frac{1}{2}}$ is already known from the LJKO step and does not need to be computed. However, in the next section, we will show numerically that the solutions produced by the VIM scheme \eqref{eq:VIMdiscrete} are much more oscillatory than those obtained with the \scheme\ scheme.
	
	We can also propose a naive discretization of the BDF2 scheme \eqref{eq:BDF2_mat} by replacing the Wasserstein distances with discrete weighted $\dot{H}^{-1}$ norms. Consider two initial conditions $\brho_0,\brho_1\in\PCs$ and the time parameter $\tau>0$. At each step $n$, compute $\brho_{n}$ as solution to
	\begin{equation}\label{eq:bdf2discrete}
	\inf_{\brho\in\R^{\Cs}_+} \frac{\alpha}{(1-\beta)\tau} \Ac_{\Cs} \Big(\frac{\brho+\brho_{n-1}}{2};\brho_{n-1}-\brho\Big) -\frac{\beta}{(1-\beta)\tau} \Ac_{\Cs} \Big(\frac{\brho+\brho_{n-2}}{2};\brho_{n-2}-\brho\Big) + \Ec_{\Cs}(\brho) \,.
	\end{equation}
	Problem \eqref{eq:bdf2discrete} is not a convex optimization problem. Notice that it is not even bounded from below in general. Indeed, the function $\Ac_{\Cs} (\frac{\brho+\brho_{n-2}}{2};\brho_{n-2}-\brho)$ is not bounded from above if the density $\brho_{n-2}$ is not supported everywhere.
	We can nevertheless try to compute stationary points of the objective function in \eqref{eq:bdf2discrete} using again an interior point algorithm. Despite not being a robust and completely meaningful strategy, in some cases it is possible to solve the problem, which enables us to compare it to our implementation.
	
	\begin{remark}
	In one dimension, as pointed out in Remark \ref{rem:metricextra_L2}, both the metric extrapolation \eqref{eq:metricextra} and the BDF2 scheme \eqref{eq:BDF2_mat} can be recast as convex optimization problems. In this case it is possible then to design effective discretizations for these (as originally done in \cite{Matthes2019bdf2}). Nevertheless, this approach requires, at least in the Eulerian framework, to be able to switch between discrete densities and discrete quantile functions, and it does not appear obvious how to achieve this while preserving the second order accuracy of the space discretization.
	\end{remark}
	
	\section{Numerical validation of the \scheme\ scheme}\label{sec:numerics}
	
	The objective of this section is to validate our numerical scheme \eqref{eq:LJKO2th}.
	We will first show qualitatively its behavior with simple one-dimensional examples and compare it to the schemes \eqref{eq:VIMdiscrete} and \eqref{eq:bdf2discrete}. We then show that all these three approaches lead to a second order accurate discretization in both time and space.
	We consider for these purposes two specific problems that exhibit a gradient flow structure in the Wasserstein space: the Fokker-Planck equation we presented in Section \ref{ssec:LJKO2convergence} and the porous medium equation. This latter writes
	\begin{equation}\label{eq:porouseqLJKO2}
	\partial_t \rhoe = \Delta \rhoe^{\delta} + \mathrm{div} (\rhoe \nabla V) \,,
	\end{equation}
	and it is a Wasserstein gradient flow with respect to the energy
	\begin{equation}\label{eq:porousenLJKO2}
	\mathcal{E}(\rho) = \int_{\Omega} \frac{1}{\delta-1} \rho^{\delta} + \rho V \,,
	\end{equation}
	for a given $\delta>1$ and with $V\in W^{1,\infty}(\Omega)$ a Lipschitz continuous exterior potential \cite{otto2001geometry}.
	The energy functionals \eqref{eq:FokkerPlanckEnergy} and \eqref{eq:porousenLJKO2} are both of the form $\Ec(\rho)=\intO E(\rho) \d x$ for a strictly convex function $E:\mathbb{R}_+ \rightarrow \mathbb{R}$. They can be straightforwardly discretized as $\Ec_{\Cs} = \sum_{K\in\Cs} E(\rho_K) m_K$.
	Finally, we will test scheme \eqref{eq:LJKO2th} on a more challenging application in order to show its flexibility and robustness, that is an incompressible immiscible multiphase flow in a porous medium.
	
	We remark that when two initial conditions $\brho_0,\brho_1$ are needed, we compute first $\brho_1$ from $\brho_0$ via an LJKO step:
	\[
	\brho_1=\argmin_{\brho\in\R^{\Cs}_+} \frac{1}{\tau} \Ac_{\Cs} \Big(\frac{\brho+\brho_0}{2};\brho_0-\brho\Big) + \Ec_{\Cs}(\brho) \,.
	\]
	In the ODE setting, computing the second initial condition via a first step of implicit Euler scheme ensures the overall second order accuracy \cite{deuflhard2002scientific}. This strategy reveals to be numerically effective also in this setting.

	\subsection{Comparison of the three approaches}\label{ssec:comparison}
	
	We compare the three different approaches on simple one dimensional tests for the diffusion equation and the porous medium equation.
	For both system we set $\Om = [0,1]$, discretized in subintervals of equal length $m_K=0.02$.
	
	We first consider the diffusion equation, which is problem \eqref{eq:FokkerPlanckLJKO2} with zero external potential $V$.  We take as initial condition
	\[
	\rho_0 = \exp\Big(-50\Big(x-\frac{1}{2}\Big)^2\Big) \,,
	\]
	which we discretize as $\brho_0= (\rho_0(\x_K))_{K\in\Cs}$, and the time step $\tau=0.01$. In Figure \ref{fig:1dtest_FP}, we show the density obtained with the three schemes at three different times.
	Using the VIM scheme \eqref{eq:VIMdiscrete}, spurious oscillations appear in the solution and these persist along the integration in time. Such oscillations can be explained as the result of the interaction of the extrapolation step, causing the mass to exit the domain, and the boundary conditions, forcing the mass to stay within $\Omega$. Neither the \scheme\ scheme \eqref{eq:LJKO2th} nor the BDF2 scheme \eqref{eq:bdf2discrete}  suffer from this problem. However, notice that in both cases the dynamics slightly differ from pure diffusion due to the presence of bumps in the solution.
	
	\begin{figure}[t!]
	\centering
	\includegraphics[trim={3.6cm 8.5cm 3.6cm 8cm},clip,width=0.3\textwidth]{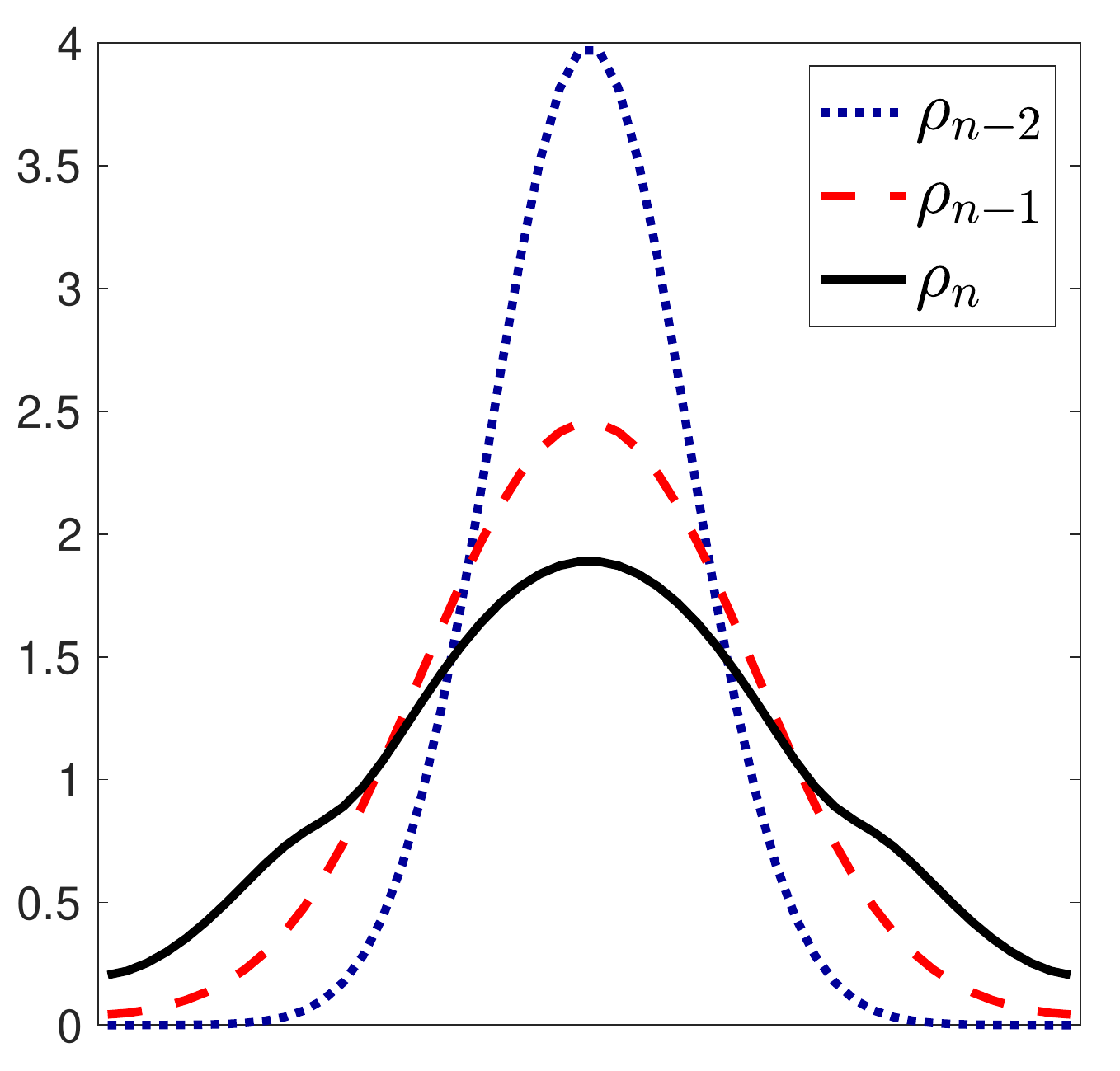}
	\includegraphics[trim={3.6cm 8.5cm 3.6cm 8cm},clip,width=0.3\textwidth]{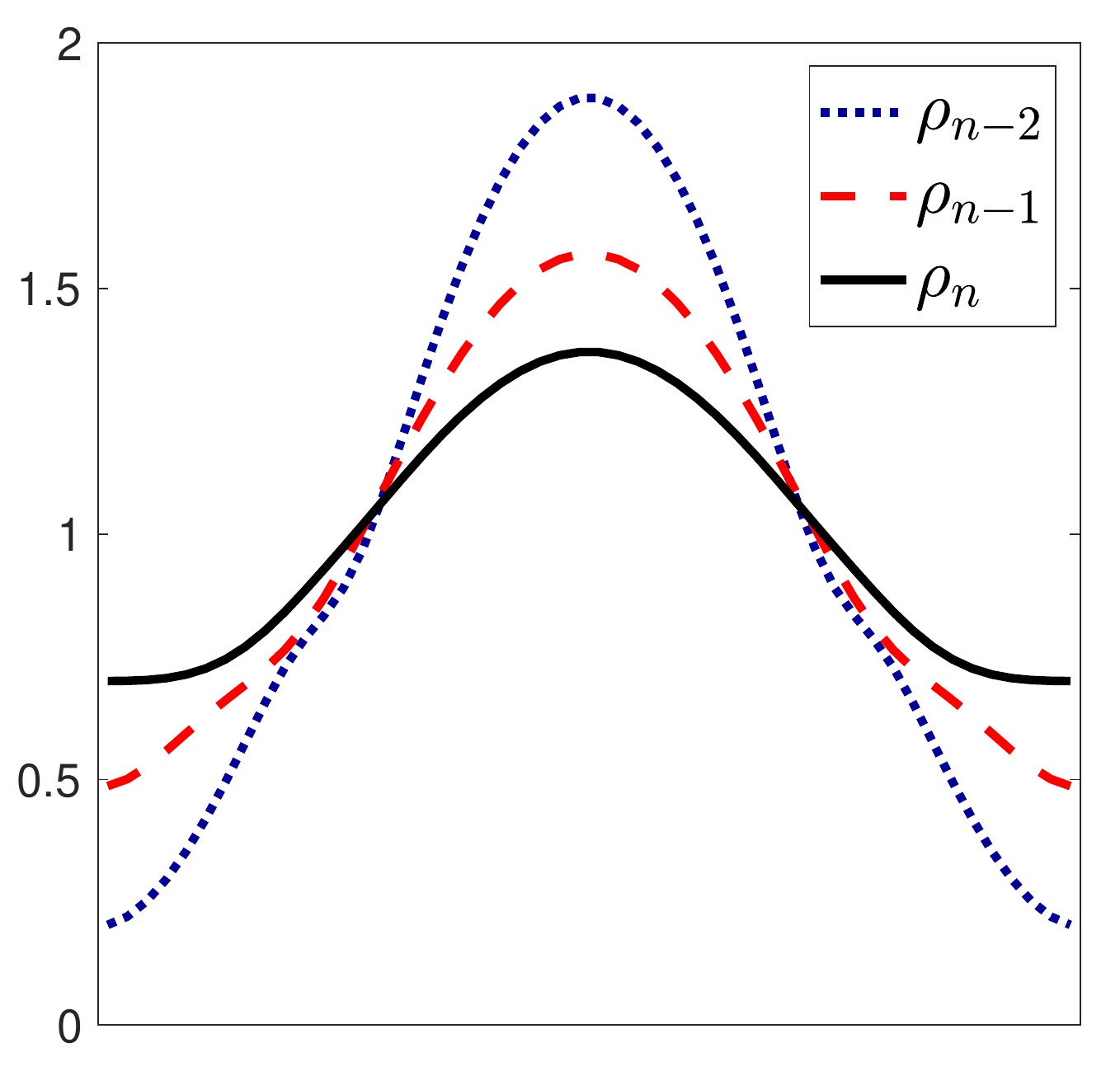}
	\includegraphics[trim={3.6cm 8.5cm 3.6cm 8cm},clip,width=0.3\textwidth]{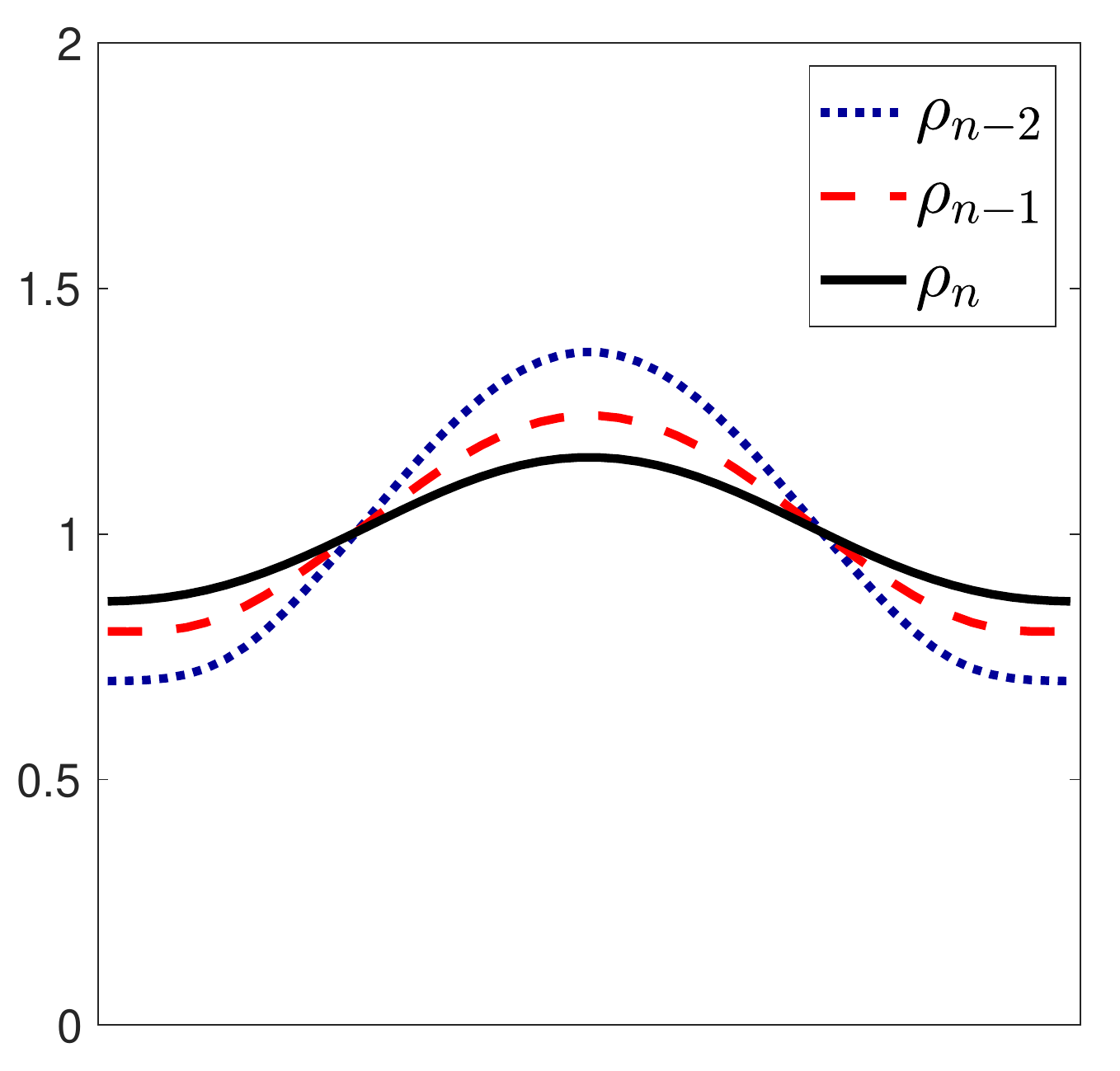} \\
	\includegraphics[trim={3.6cm 8.5cm 3.6cm 8cm},clip,width=0.3\textwidth]{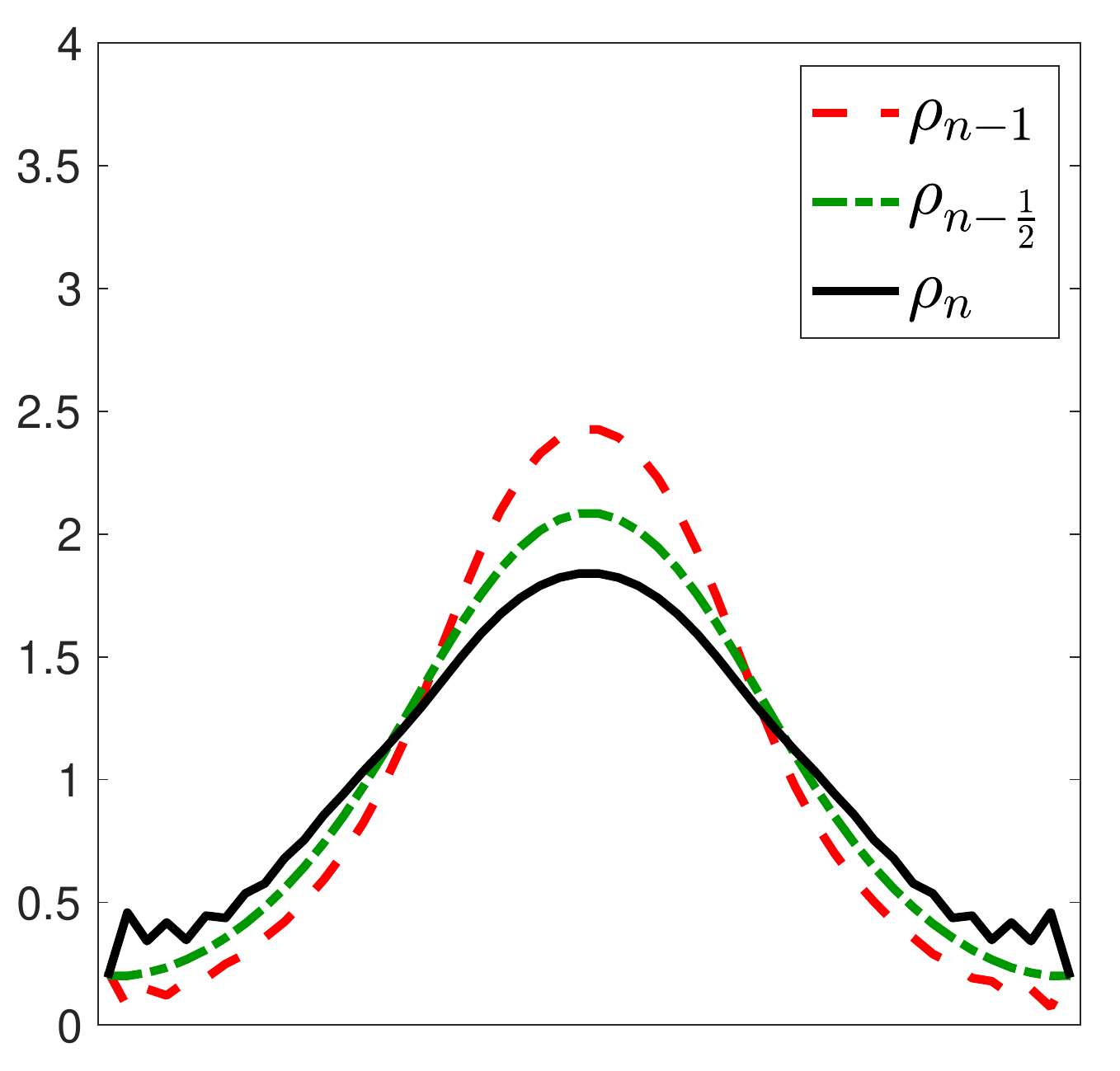}
	\includegraphics[trim={3.6cm 8.5cm 3.6cm 8cm},clip,width=0.3\textwidth]{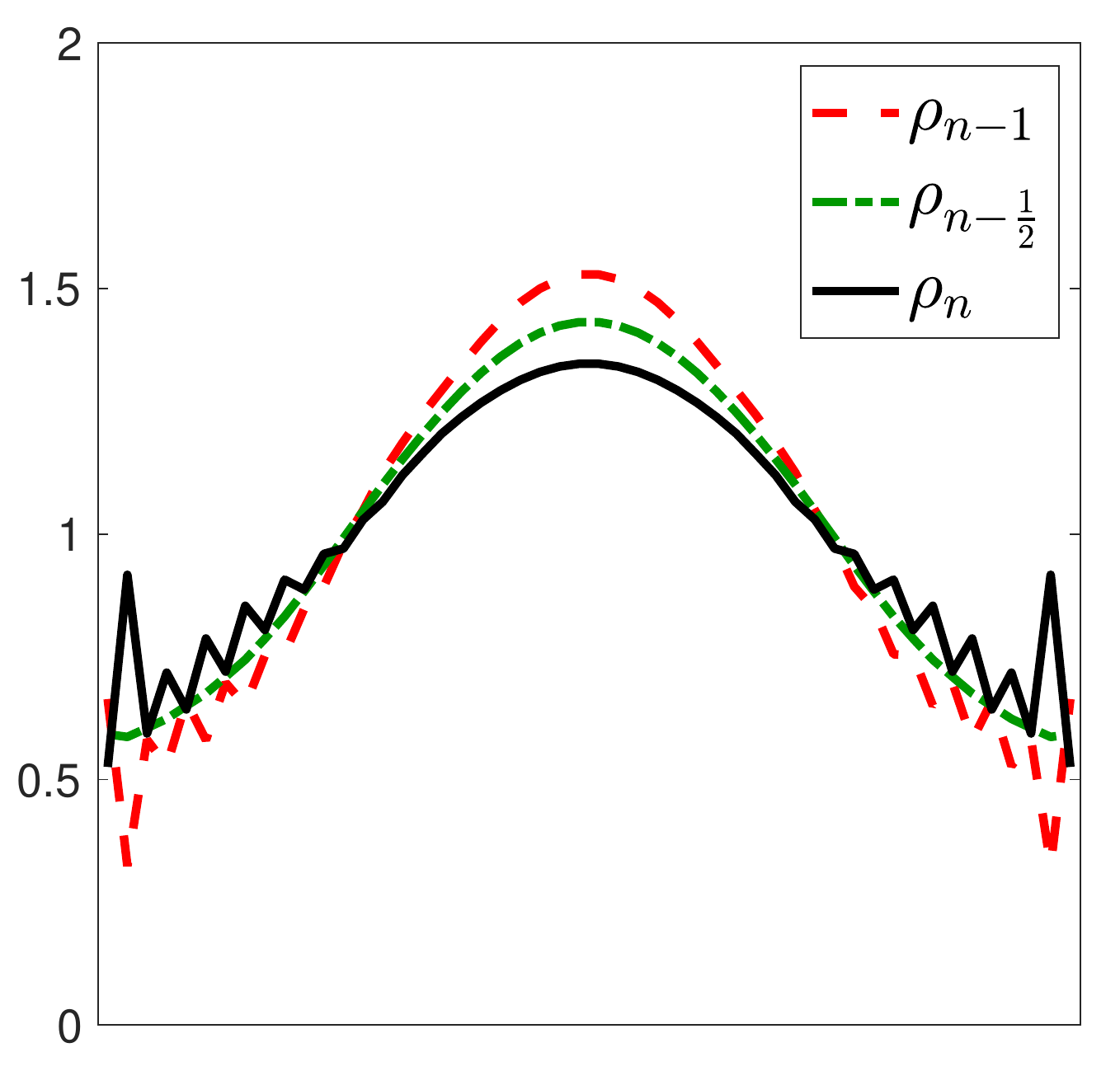}
	\includegraphics[trim={3.6cm 8.5cm 3.6cm 8cm},clip,width=0.3\textwidth]{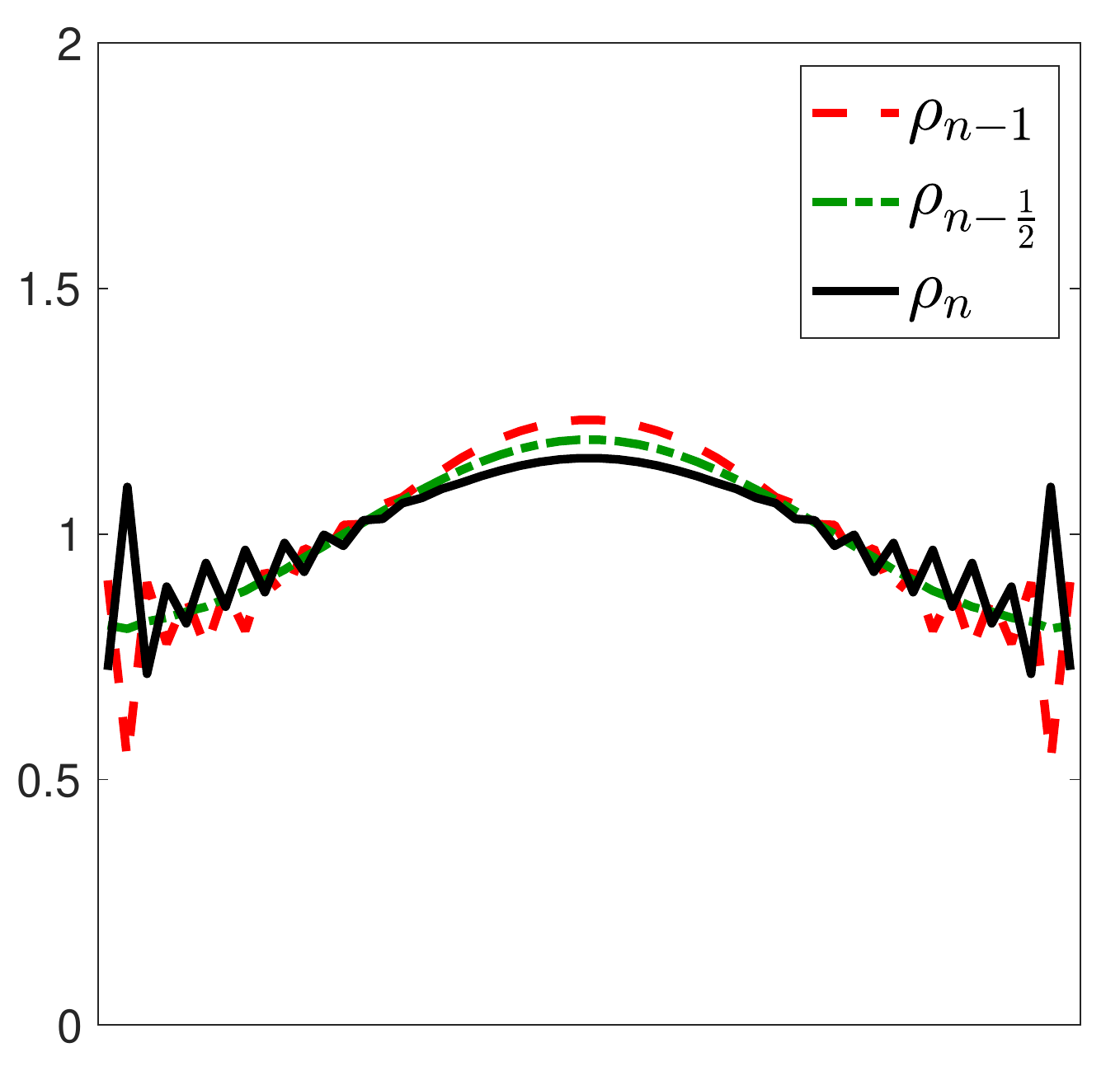} \\
	\includegraphics[trim={3.6cm 8.5cm 3.6cm 8cm},clip,width=0.3\textwidth]{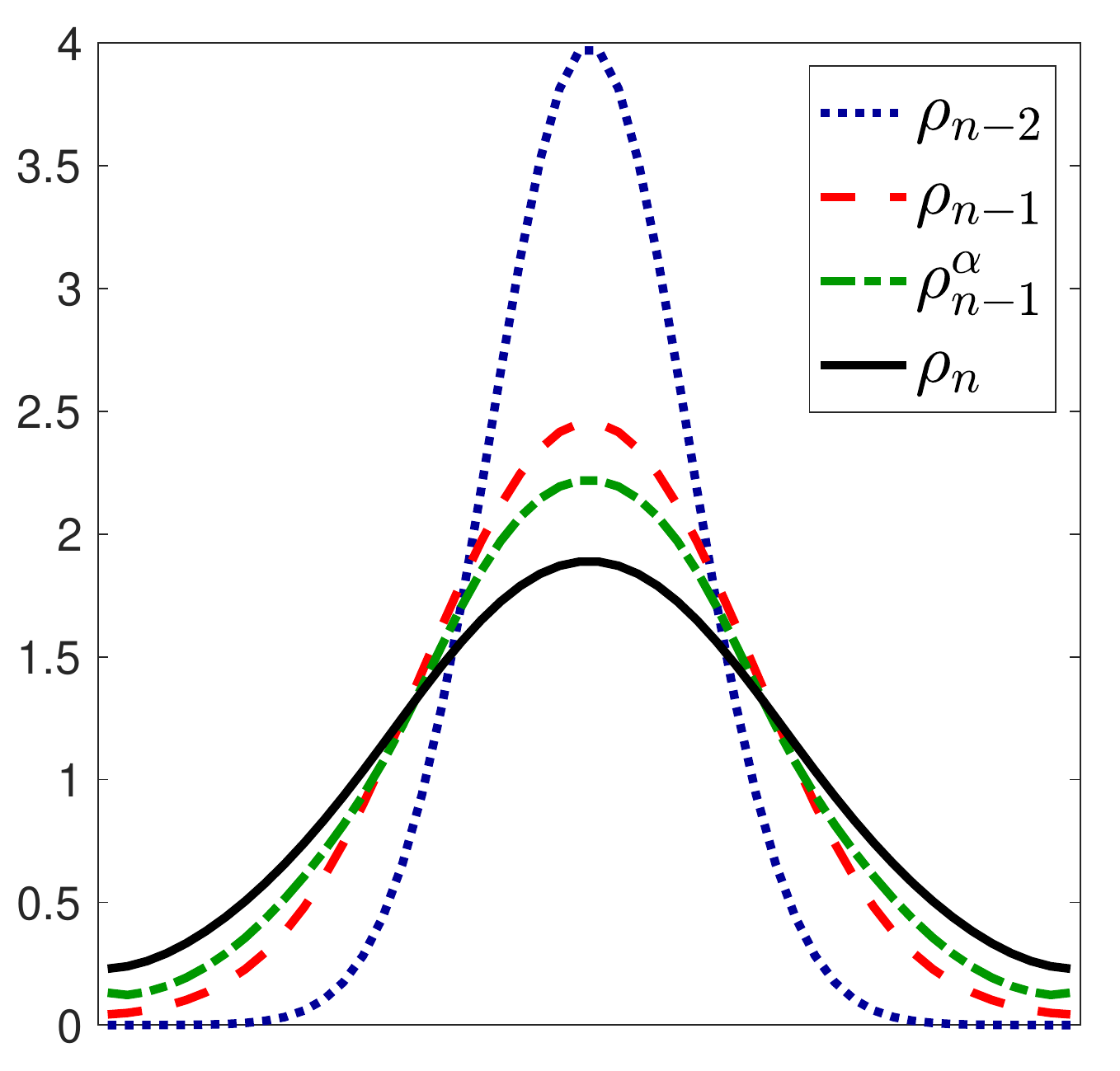}
	\includegraphics[trim={3.6cm 8.5cm 3.6cm 8cm},clip,width=0.3\textwidth]{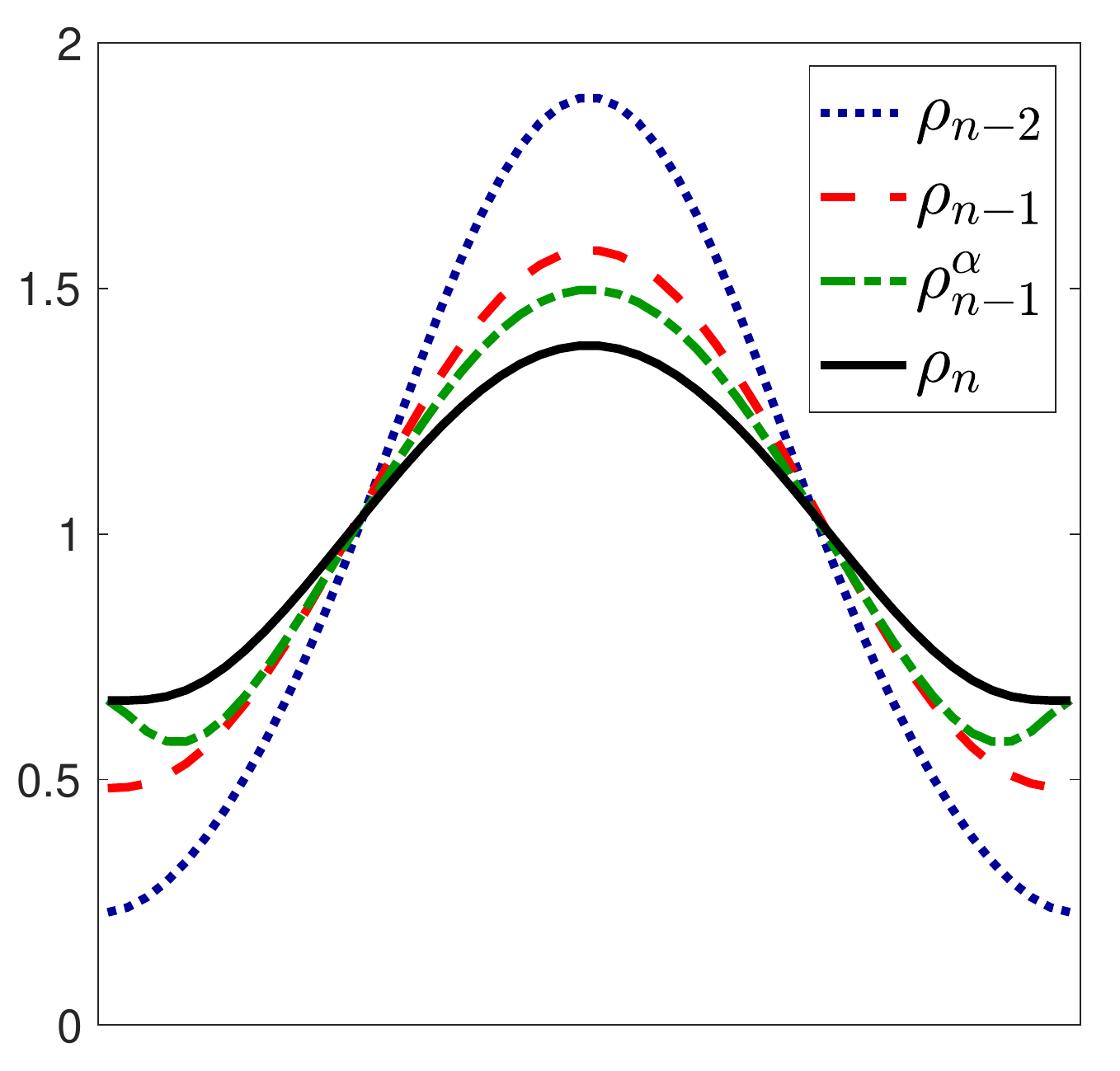}
	\includegraphics[trim={3.6cm 8.5cm 3.6cm 8cm},clip,width=0.3\textwidth]{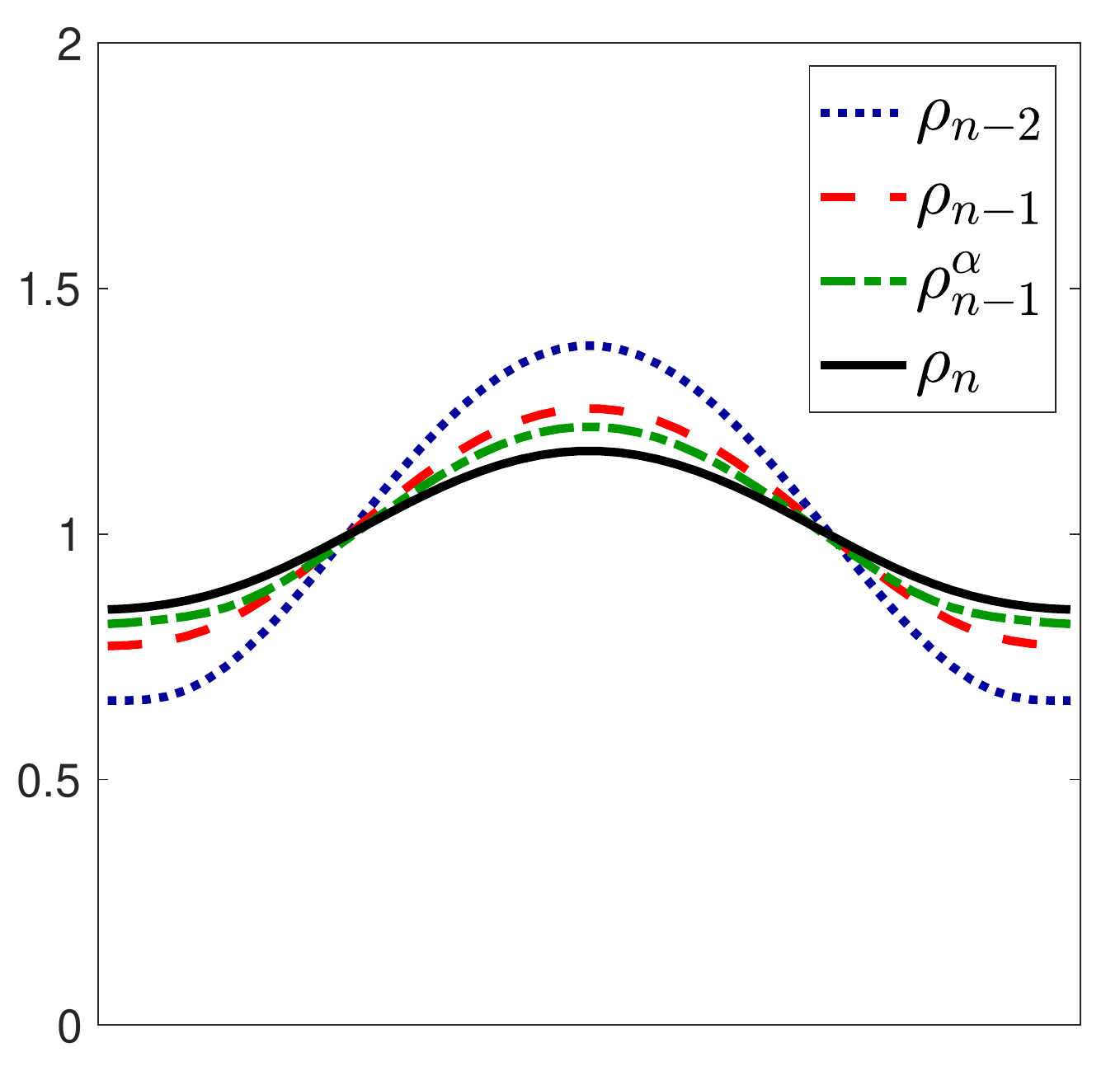}
	\caption{Comparison between the three schemes for the diffusion equation. From top to bottom, the BDF2 scheme \eqref{eq:bdf2discrete}, the VIM scheme \eqref{eq:VIMdiscrete} and the \scheme\ scheme \eqref{eq:LJKO2th}. From left to right, three different time steps: $t=0.02, 0.04, 0.06$.}
	\label{fig:1dtest_FP}
	\end{figure}
	
	Consider now the porous medium equation \eqref{eq:porouseqLJKO2} with $\delta=2$ and the external potential $V(x)=-x$, which causes the mass to drift towards the positive direction. We take as initial condition
	\[
	\rho_0(x) = \indf_{x\le\frac{3}{10}} \,,
	\]
	discretized again as $\brho_0= (\rho_0(\x_K))_{K\in\Cs}$, and the time step $\tau=0.002$. In this case, the naive implementation we proposed for the BDF2 scheme does not converge, which is not surprising since the objective function in \eqref{eq:bdf2discrete} is unbounded from below. The results for the VIM scheme \eqref{eq:VIMdiscrete} and the \scheme\ scheme \eqref{eq:LJKO2th} are shown in Figure \ref{fig:1dtest_PM}. Again, the VIM scheme is unstable whereas the \scheme\ scheme controls and smooths the oscillations generated by the extrapolation step. Note that in this case the oscillations are due to the compact support of the density and the explicit integration in time of the Hamilton-Jacobi equation: in the extrapolation step the mass cannot flow outside the support, which acts then like a boundary.

	Finally, we observe that, as in the continuous setting, we cannot expect any regularity on the measure obtained after the extrapolation, and the JKO step is the only source of regularity for both the \scheme\ and the VIM scheme.
	One may argue that the two schemes perform the same operations up to a temporal shift, which should contradict the different behavior shown in Figure \ref{fig:1dtest_FP}. However, notice that scheme \eqref{eq:LJKO2th} performs a smaller extrapolation and a bigger JKO step with respect to scheme \eqref{eq:VIMdiscrete}. Furthermore, in \eqref{eq:LJKO2th} one needs to compute an extrapolation between two minimizers of the JKO step, whereas in \eqref{eq:VIMdiscrete} the extrapolation is between an extrapolated measure and a JKO minimizer. 
	
	\begin{figure}[t!]
	\centering
	\includegraphics[trim={3.6cm 8.5cm 3.6cm 8cm},clip,width=0.3\textwidth]{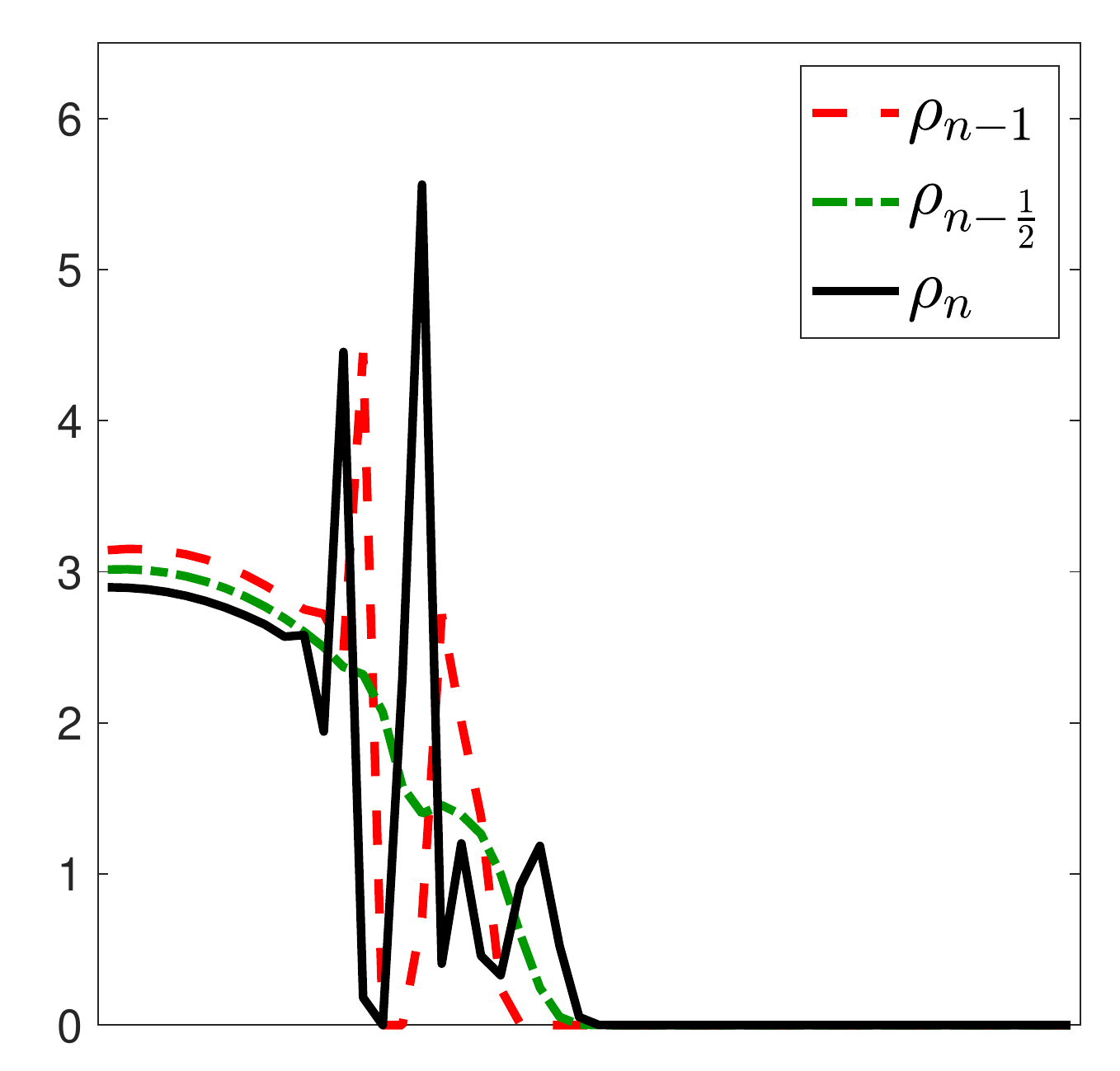}
	\includegraphics[trim={3.6cm 8.5cm 3.6cm 8cm},clip,width=0.3\textwidth]{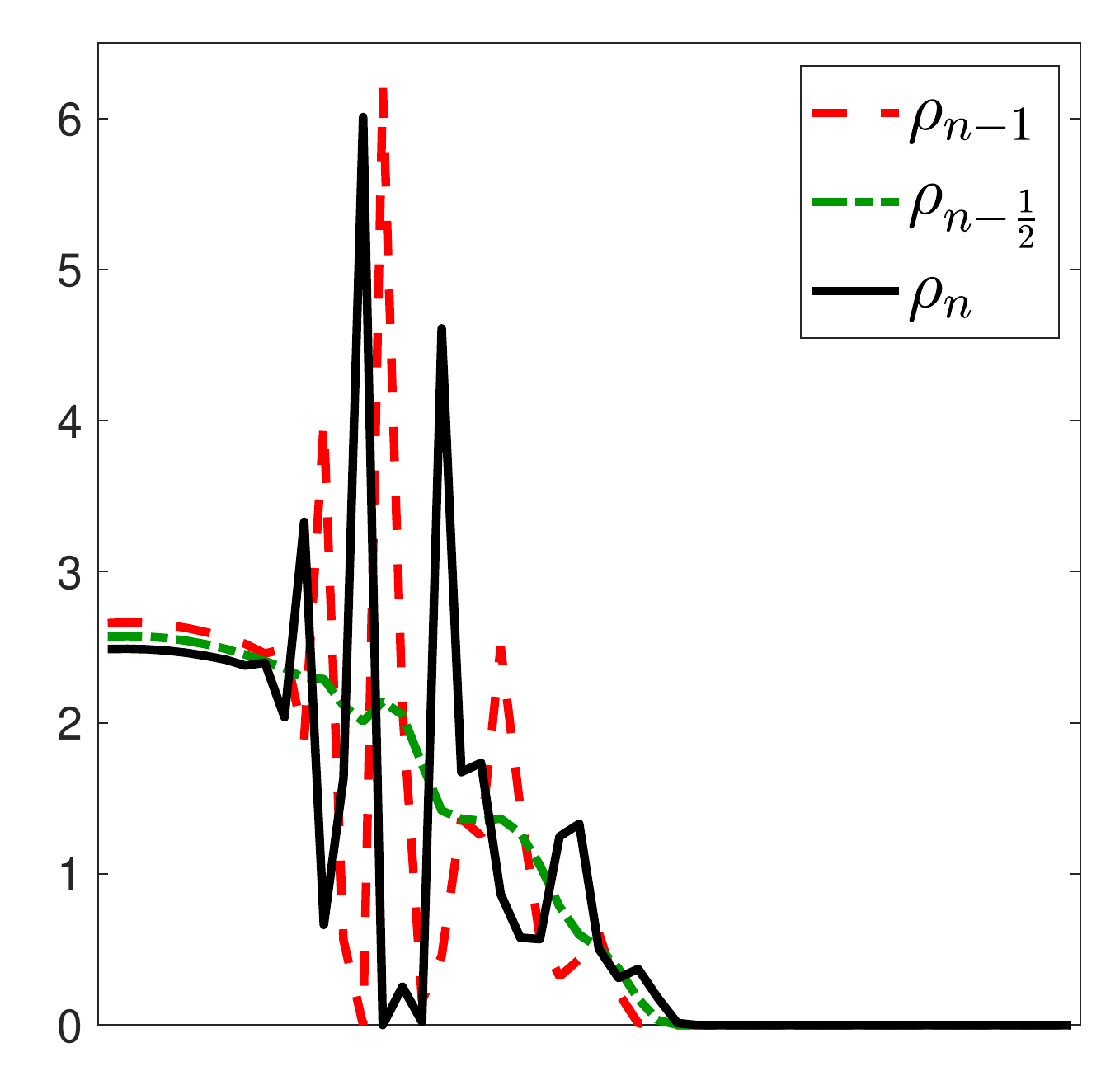}
	\includegraphics[trim={3.6cm 8.5cm 3.6cm 8cm},clip,width=0.3\textwidth]{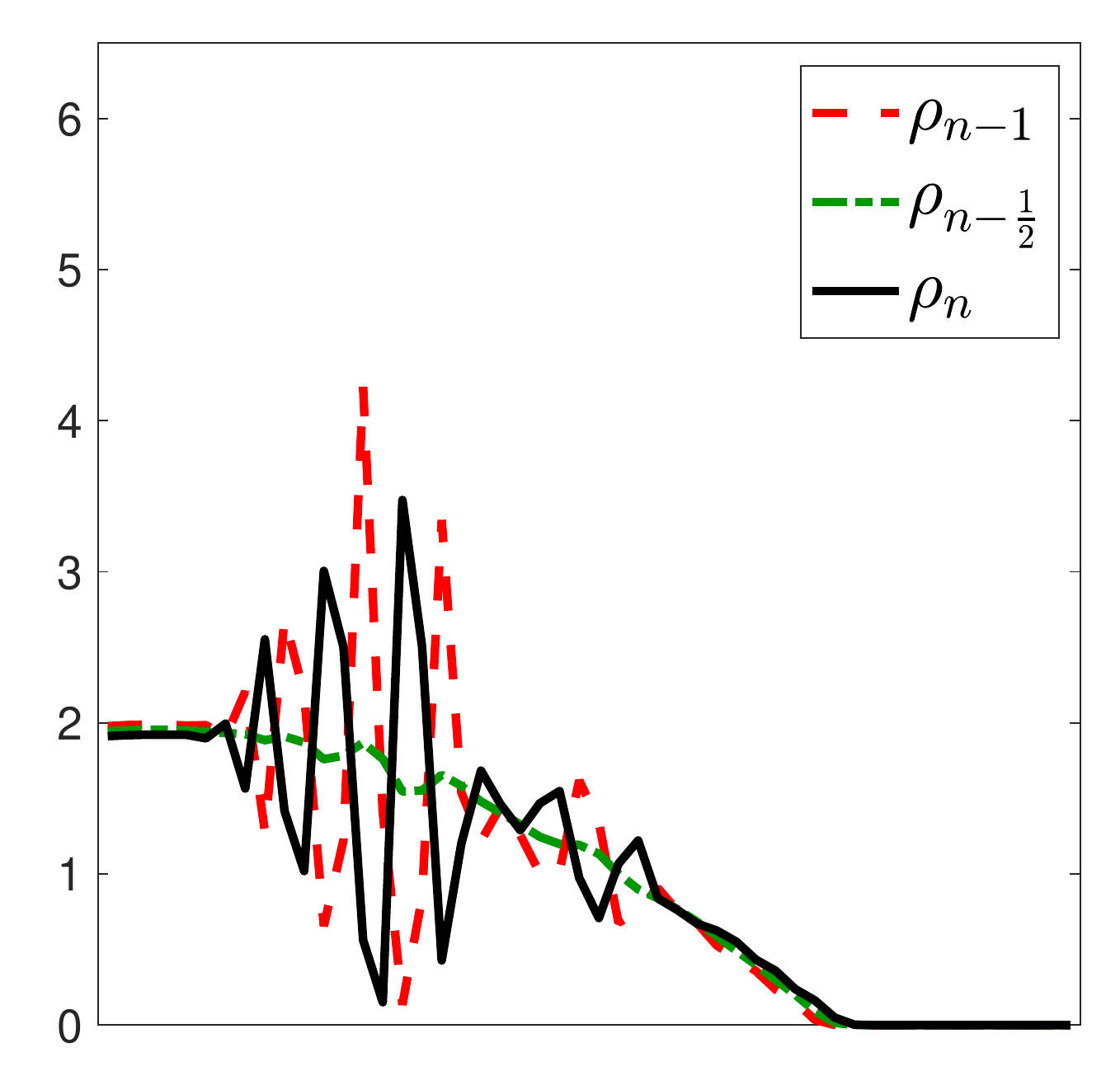} \\
	\includegraphics[trim={3.6cm 8.5cm 3.6cm 8cm},clip,width=0.3\textwidth]{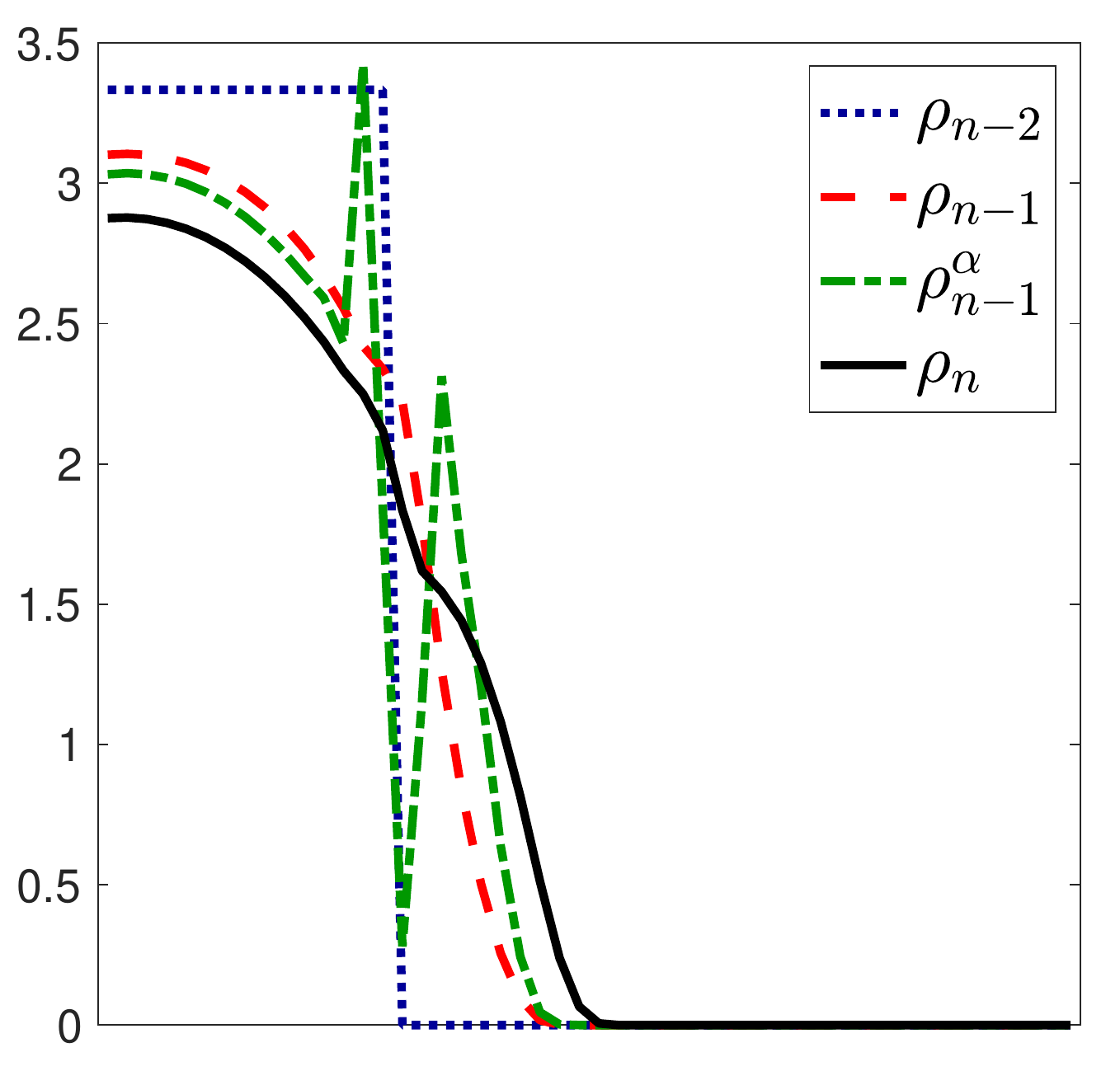}
	\includegraphics[trim={3.6cm 8.5cm 3.6cm 8cm},clip,width=0.3\textwidth]{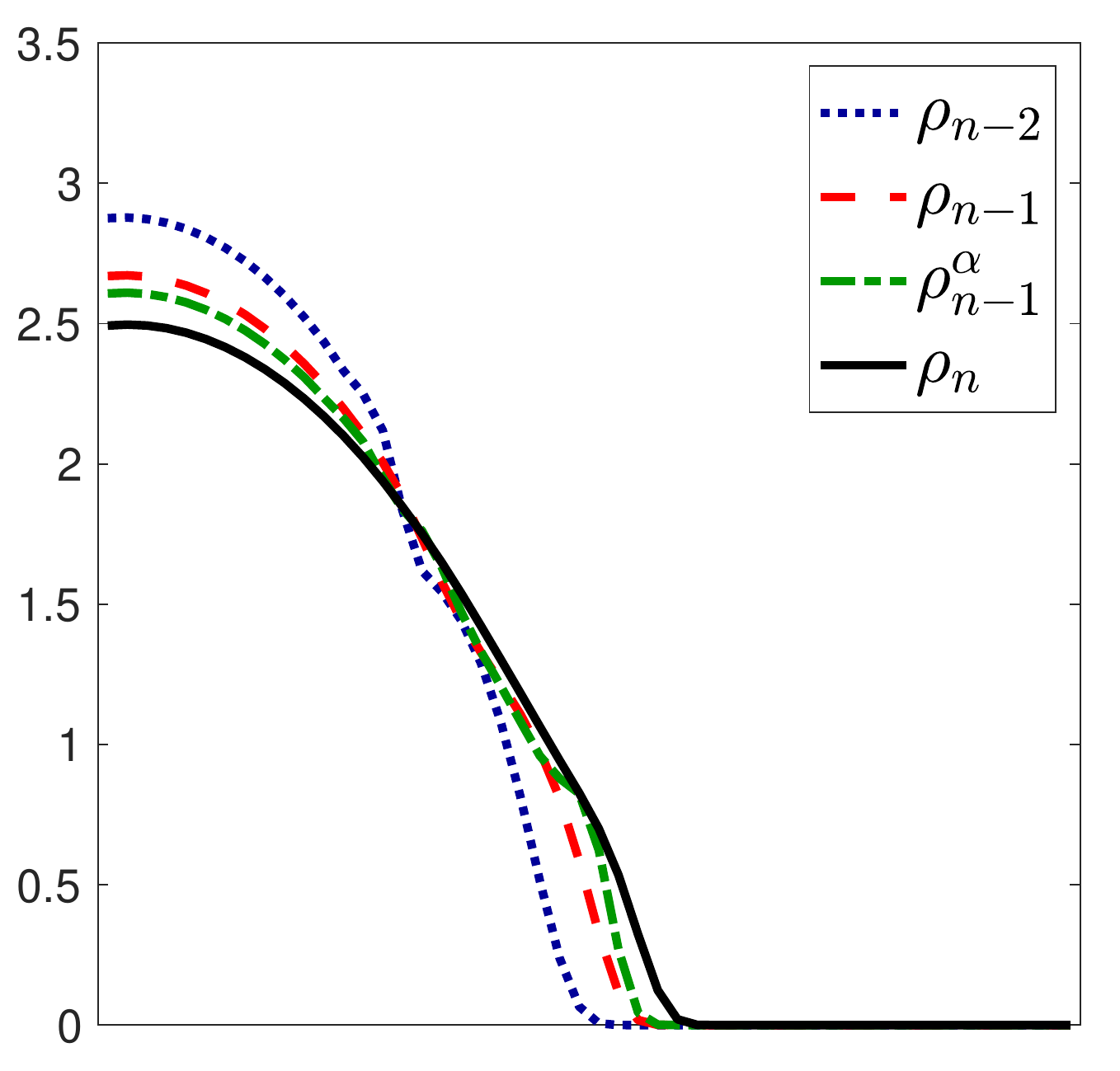}
	\includegraphics[trim={3.6cm 8.5cm 3.6cm 8cm},clip,width=0.3\textwidth]{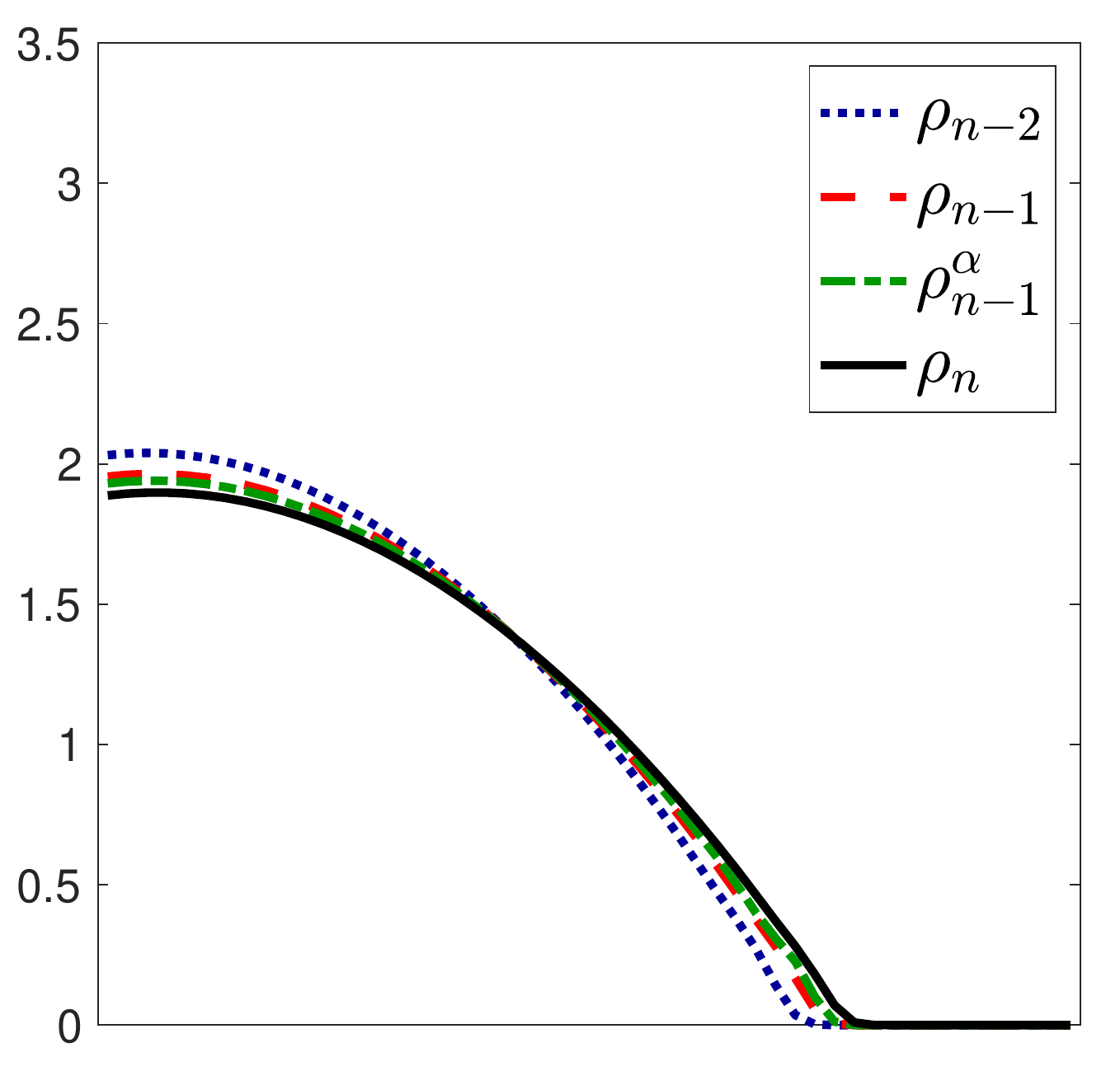} 
	\caption{Comparison between the VIM scheme \eqref{eq:VIMdiscrete} (top row) and the \scheme\ scheme \eqref{eq:LJKO2th} (bottom row) for the porous medium equation. The BDF2 scheme \eqref{eq:bdf2discrete} does not converge in this case. From left to right, three different time steps: $t=0.004, 0.008, 0.020$.}
	\label{fig:1dtest_PM}
	\end{figure}
	
	\subsection{Convergence tests}
	
	We now compare the three schemes in terms of order of convergence with respect to an exact one-dimensional solution of the Fokker-Planck equation  \eqref{eq:FokkerPlanckLJKO2}. For the \scheme\ scheme \eqref{eq:LJKO2th}, we will also perform two dimensional tests using the porous medium equation \eqref{eq:porouseqLJKO2}. For all tests, we consider a sequence of meshes $\left(\Cs_m,\Es_m, {(\x_K)}_{K\in\Cs_m}\right)$ with decreasing meshsize $h_m$ and a sequence of decreasing time steps $\tau_m$ such that $\frac{h_{m+1}}{h_m}=\frac{\tau_{m+1}}{\tau_m}$.
	We solve the discrete problem for each couple $(h_m,\tau_m)$ and evaluate the convergence with respect to the discrete $L^1((0,T);L^1(\Omega))$ error:
	\[
	\epsilon_m = \sum_{n} \tau \sum_{K \in \Cs_m} |\rho_{K,n}-\rhoe(\x_K,n \tau)| m_K \,.
	\]
	We compute the rate of convergence as: \[\frac{\log(\epsilon_{m-1})-\log(\epsilon_m)}{\log(\tau_{m-1})-\log(\tau_m )} \,.
	\]
	
	\subsubsection{One-dimensional tests}\label{sssec:1dconvergence}
	
	On the domain $\Omega=[0,1]$ and for the external potential $V(x)=-gx$, we consider the following exact solution to the Fokker-Planck equation \eqref{eq:FokkerPlanckLJKO2}:
	\begin{equation}\label{eq:FPsol}
	\rhoe(t,x) = \exp\left(-\Big(\pi^2+\frac{g^2}{4}\Big) t+\frac{g}{2}x\right)\left(\pi \cos(\pi x)+\frac{g}{2}\sin(\pi x)\right)+\pi \exp\Big(g\Big(x-\frac{1}{2}\Big)\Big) .
	\end{equation}
	We consider the value $g=1$. For each mesh $\left(\Cs_m,\Es_m, {(\x_K)}_{K\in\Cs_m}\right)$ and time step $\tau_m$, we compute then the discrete solution using the three schemes, starting from the initial condition $\brho_0= (\rhoe(0,\x_K))_{K\in\Cs}$.
	The results are presented in Table \ref{tab:errors1d_order2}. Both the BDF2 and the \scheme\ schemes are second order accurate, whereas the order of convergence is less than one for the VIM scheme. This is due to the presence of oscillations in the solutions obtained with the VIM scheme, which are however only present at the beginning of the time interval $[0,0.25]$. Repeating the test on the interval $[0.05,0.25]$, the convergence significantly improves and attains second order accuracy as well.
	
	\begin{table}
	\centering
	\setlength{\tabcolsep}{5pt}
	\caption{Errors and convergence rates for the three schemes for the Fokker-Planck equation in one dimension. Integration time $[0,0.25]$ for the first three cases, $[0.05,0.25]$ for the last one.}
	\label{tab:errors1d_order2}
	\begin{tabular}{cccccccccc}
	\toprule
	& & \multicolumn{2}{c}{BDF2 \eqref{eq:bdf2discrete}} & \multicolumn{2}{c}{\scheme\ \eqref{eq:LJKO2th}} & \multicolumn{2}{c}{VIM \eqref{eq:VIMdiscrete}} & \multicolumn{2}{c}{VIM \eqref{eq:VIMdiscrete}} \\
	\toprule
	$h_m$ & $\tau_m$ & $\epsilon_m$ & rate & $\epsilon_m$ & rate & $\epsilon_m$ & rate & $\epsilon_m$ & rate \\
	\midrule
	0.100 & 0.050 & 2.091e-02 & /   & 2.217e-02 & /  & 5.895e-02 & / & 4.667e-03 & /  \\
	0.050 & 0.025 & 6.376e-03 & 1.713 & 7.016e-03 & 1.660 & 3.615e-02 & 0.706 & 1.024e-03 & 2.188 \\ 
	0.025 & 0.013 & 1.791e-03 & 1.832 & 2.044e-03 & 1.779 & 2.294e-02 & 0.656 & 2.517e-04 & 2.025  \\ 
	0.013 & 0.006 & 4.849e-04 & 1.885 & 5.653e-04 & 1.854 & 1.468e-02 & 0.644 & 6.264e-05 & 2.007 \\ 
	0.006 & 0.003 & 1.280e-04 & 1.922 & 1.508e-04 & 1.906 & 1.234e-02 & 0.251 & 1.562e-05 & 2.003 \\ 
	0.003 & 0.002 & 3.324e-05 & 1.945 & 3.933e-05 & 1.939 & 9.983e-03 & 0.306 & 3.901e-06 & 2.002 \\
	\bottomrule
	\end{tabular}
	\end{table}
	
	\subsubsection{Two-dimensional tests} We now estimate the order of convergence of the \scheme\ scheme on two-dimensional test cases. Here, we set $\Omega= [0,1]^2$ and use the same sequence of grids that have been used in \cite{cances2020LJKO,natale2020FVCA}, which allows for a direct comparison of the results therein.
	
	We repeat first the test on the Fokker-Planck equation in two dimensions using the same solution \eqref{eq:FPsol} on the domain $\Om=[0,1]^2$. The results are shown in Table \ref{tab:errors2d_FP_LJKO2} and confirm the second order accuracy of the scheme.
	
	\begin{table}[t]
	\centering
	\caption{Errors and convergence rate for the \scheme\ scheme \eqref{eq:LJKO2th} for the Fokker-Planck equation in two dimensions.}
	\label{tab:errors2d_FP_LJKO2}
	\begin{tabular}{cccc}
	\toprule
	$h_m$ & $\tau_m$ & $\epsilon_m$ & rate \\
	\midrule
	0.2986 & 0.0500  & 2.111e-02 & / \\ 
	0.1493 & 0.0250 &  6.800e-03 & 1.634 \\ 
	0.0747 & 0.0125 &  2.017e-03 & 1.754 \\ 
	0.0373 & 0.0063 &  5.669e-04 & 1.831 \\ 
	0.0187 & 0.0031 &  1.535e-04 & 1.884 \\ 
	\bottomrule
	\end{tabular}
	\end{table}

	We also perform a convergence test with respect to an explicit solution of the porous medium equation \eqref{eq:porouseqLJKO2} with zero exterior potential $V$. This equation admits a solution called Barenblatt profile \cite{otto2001geometry}:
	\begin{equation}\label{eq:porous_sol}
	\rhoe(t,{x}) = \frac{1}{t^{d \lambda}}\Big(\frac{\delta-1}{\delta}\Big)^{\frac{1}{\delta-1}} \max\Big(M-\frac{\lambda}{2}\Big|\frac{x-x_0}{t^\lambda}\Big|^2,0\Big)^{\frac{1}{\delta-1}} \,,
	\end{equation}
	where $\lambda=\frac{1}{d(\delta-1)+2}$, $d$ standing for the space dimension, and $x_0$ is the point where the mass is centered. The parameter $M$ can be chosen to fix the total mass. The value
	\[
	M=\Big(\frac{\delta}{\delta-1}\Big)^{-\frac{1}{\delta}} \Big(\frac{\lambda \delta}{2\pi(\delta-1)}\Big)^\frac{\delta-1}{\delta}
	\]
	sets it equal to one.
	The function \eqref{eq:porous_sol} solves \eqref{eq:porouseqLJKO2} on the domain $\Om=[0,1]^d$, with $\x_0$ in the interior of $\Om$, starting from $t_0>0$ and for a sufficiently small time horizon $T$, such that the mass does not reach the boundary of the domain.
	We consider the two-dimensional case and $x_0=(0.5,0.5)$.
	We solve the problem for $\delta=2,3,4$, with initial condition $\brho_0= (\rhoe(t_0,\x_K))_{K\in\Cs}$, starting respectively from $t_0=10^{-4},10^{-5},10^{-6}$ and up to time $T=t_0+10^{-3}$.
	The results are presented in Table \ref{tab:errors2d_PM}. The convergence profile is not clean, probably due to the low precision of the discretization in space. We can nevertheless notice that in the case $\delta=2$ the rate of convergence is approaching order two with refinement. In the cases $\delta=3,4$, where the solution is less regular, the order tends to $1.5$.
	
	\begin{table}[t]
	\centering
	\caption{Errors and convergence rates for the \scheme\ scheme \eqref{eq:LJKO2th} for the porous medium equation.}
	\label{tab:errors2d_PM}
	\begin{tabular}{cccccccc}
	\toprule
	& & \multicolumn{2}{c}{$\delta=2$} & \multicolumn{2}{c}{$\delta=3$} & \multicolumn{2}{c}{$\delta=4$} \\
	\toprule
	$h_m$ & $\tau_m$ & $\epsilon_m$ & rate & $\epsilon_m$ & rate & $\epsilon_m$ & rate \\
	\midrule
	0.2986 & 2.000e-04 & 5.139e-04 & / & 7.515e-04 & / & 9.537e-04 & / \\  
	0.1493 & 1.000e-04 & 1.999e-04 & 1.363 & 2.780e-04 & 1.435 & 3.085e-04 & 1.628 \\ 
	0.0747 & 5.000e-05 & 6.429e-05 & 1.636 & 4.630e-05 & 2.586 & 1.103e-04 & 1.485 \\ 
	0.0373 & 2.500e-05 & 1.471e-05 & 2.127 & 2.903e-05 & 0.674 & 3.847e-05 & 1.519 \\ 
	0.0187 & 1.250e-05 & 4.129e-06 & 1.833 & 7.521e-06 & 1.949 & 1.340e-05 & 1.522 \\ 
	\bottomrule
	\end{tabular}
	\end{table}

	\subsection{Incompressible immiscible multiphase flows in porous media}\label{ssec:multiphase}

	Incompressible immiscible multiphase flows in porous media can be described as Wasserstein gradient flows, as shown in \cite{cances2017multiphase}. We recall quickly the model problem in a simplified way.
	In the porous medium $\Omega$, $N+1$ phases are flowing and we denote by $\boldsymbol{s}=(s_0,...,s_N)$ the saturations of each phase, i.e. the portion of volume occupied by each phase in each point. The evolution of each saturation obeys the following equations:
	\begin{equation}\label{eq:multiphase}
	\left\{
	\begin{aligned}
	&\frac{\partial s_i}{\partial t} + \mathrm{div} (s_i v_i) = 0 \,,\\
	&v_i= - \frac{1}{\mu_i} (\nabla p_i - \rho_i g) \,, \\
	&p_i-p_0 = \pi_i(\boldsymbol{s},x) \, ,
	\end{aligned}
	\right.
	\end{equation}
	$i\in \{0,...,N\}$ for the first two equations, $i\in \{1,...,N\}$ for the third one, plus the total saturation condition $\sum_{i=0}^{N} s_i(t,x) = 1$ and the no-flux boundary conditions. The densities $\rho_i$ and the viscosities $\mu_i$, both constant in the whole domain, are characteristic of each phase. In \eqref{eq:multiphase} the porosity of the medium is considered constant and neglected. The term $\rho_i g$ reflects the influence of the potential energy on the motion ($g$ is the gravitational acceleration), but other types of potential energy could be considered. The model is completed specifying the $N$ capillary pressure relations, described by the functions $\pi_i$.
	
	We introduce the probability spaces
	\[
	\mathcal{P}_i = \Big\{ s_i \in \Pc(\Omega): s_i(\Om) = c_i \Big\}, \quad i\in \{0,...,N\},
	\]
	with the constant $c_i$ denoting the total mass of each phase. Each space $\mathcal{P}_i$ is endowed with the following quadratic Wasserstein distance,
	\[
	W_{2,i}^2(s_i^1,s_i^2) = \min_{\gamma\in \Pi(s_i^1,s_i^2)} \int \mu_i |x-y|^2 \d \gamma(x,y) \,,
	\]
	for $s_i^1,s_i^2\in\mathcal{P}_i$ and we can define the global quadratic Wasserstein distance $\boldsymbol{W}_2$ on $\boldsymbol{\mathcal{P}}:= \mathcal{P}_0 \times ... \times \mathcal{P}_N$ by setting
	\[
	\boldsymbol{W}_2^2(\boldsymbol{s}^1,\boldsymbol{s}^2) = \sum_{i=0}^{N} W_{2,i}^2(s_i^1,s_i^2), \quad \forall \boldsymbol{s}^1, \boldsymbol{s}^2 \in \boldsymbol{\mathcal{P}}.
	\]
	Problem \eqref{eq:multiphase} can then be represented as the gradient flow in the space $\boldsymbol{\mathcal{P}}$ with respect to the (strictly convex) energy functional
	\begin{equation}\label{eq:Fmultiphase}
	\mathcal{E}(\boldsymbol{s}) = \int_{\Omega} \boldsymbol{\Psi} \cdot \boldsymbol{s} + \int_{\Omega} \Pi(\boldsymbol{s},x) + i_{\boldsymbol{\mathcal{S}}}(\boldsymbol{s}) \,,
	\end{equation}
	where $\boldsymbol{\Psi}=(\Psi_0,\ldots,\Psi_N)$ is the exterior gravitational potential given by
	\[
	\Psi_i(x) = -\rho_i g \cdot x, \quad \forall x \in \Omega \,,
	\]
	$\Pi(s,x)$ is a strictly convex potential such that
	\[
	\pi_i(\boldsymbol{s},x) = \frac{\partial \Pi(\boldsymbol{s},x)}{\partial s_i}, \quad i \in \{1,...,N\},
	\]
	and $i_{\boldsymbol{\mathcal{S}}}$ is the indicator function of the set
	\[
	\boldsymbol{\mathcal{S}}=\left\{ \boldsymbol{s} \in \boldsymbol{\mathcal{P}}: \sum_{i=0}^{N} s_i(x)=1,  \text{for a.e.} \,\, x\in \Omega \right\}.
	\]
	
	When applying the \scheme\ scheme to such gradient flow, the extrapolation may be taken in each space $\mathcal{P}_i$ independently, i.e. we define the extrapolation in the space $\boldsymbol{\mathcal{P}}$ as
	\[
	\extra(\boldsymbol{s}^1,\boldsymbol{s}^2)\coloneqq (\extra(s_i^1,s_i^2))_{i=0}^N \,,
	\]
	for all $\boldsymbol{s}^1,\boldsymbol{s}^2\in\boldsymbol{\mathcal{P}}$. This does not guarantee at all that at each step $n$ of the scheme the extrapolation is a feasible point for $\mathcal{E}(\boldsymbol{s})$, that is $\extra(\boldsymbol{s}^1,\boldsymbol{s}^2)\notin\boldsymbol{\mathcal{S}}$ in general even though $\boldsymbol{s}^1,\boldsymbol{s}^2\in\boldsymbol{\mathcal{S}}$. Nevertheless, the resulting scheme is well defined as well as the numerical approach \eqref{eq:LJKO2th}. In our implementation, we linearize each Wasserstein distances independently. The energy functional can be discretized straightforwardly.
	
	As a specific instance of problem \eqref{eq:multiphase}, we consider a two-phase flow, where water ($s_0$) and oil ($s_1$) are competing in the porous medium. We choose the classical Brooks-Corey capillary pressure model,
	\[
	p_1-p_0 = \pi_1(s_1) = \lambda (1-s_1)^{-\frac{1}{2}}\,,
	\]
	and take $g$ acting along the negative direction of the $y$ axis, $|g|=9.81$. We set the model parameter $\lambda=0.05$. The densities and the viscosities of the two fluids are, respectively, $\rho_0=1$ and $\rho_1 = 0.87$, $\mu_0=1$ and $\mu_1=100$. We consider a non convex domain $\Om$ shaped as an hourglass and set an initial condition where the water is distributed uniformly in a layer in the upper part, whereas the oil takes the complementary space (see Figure \ref{fig:twophaseflow_0}). The evolution of the oil saturation $s_1$ is presented in Figure \ref{fig:twophaseflow}.
	
	\begin{figure}[t!]
	\centering
	\begin{minipage}{0.89\textwidth}
	\centering
	\subfloat[][$t=0$\label{fig:twophaseflow_0}]{\includegraphics[trim={4.7cm 7.8cm 4cm 7.25cm},clip,width=0.29\textwidth]{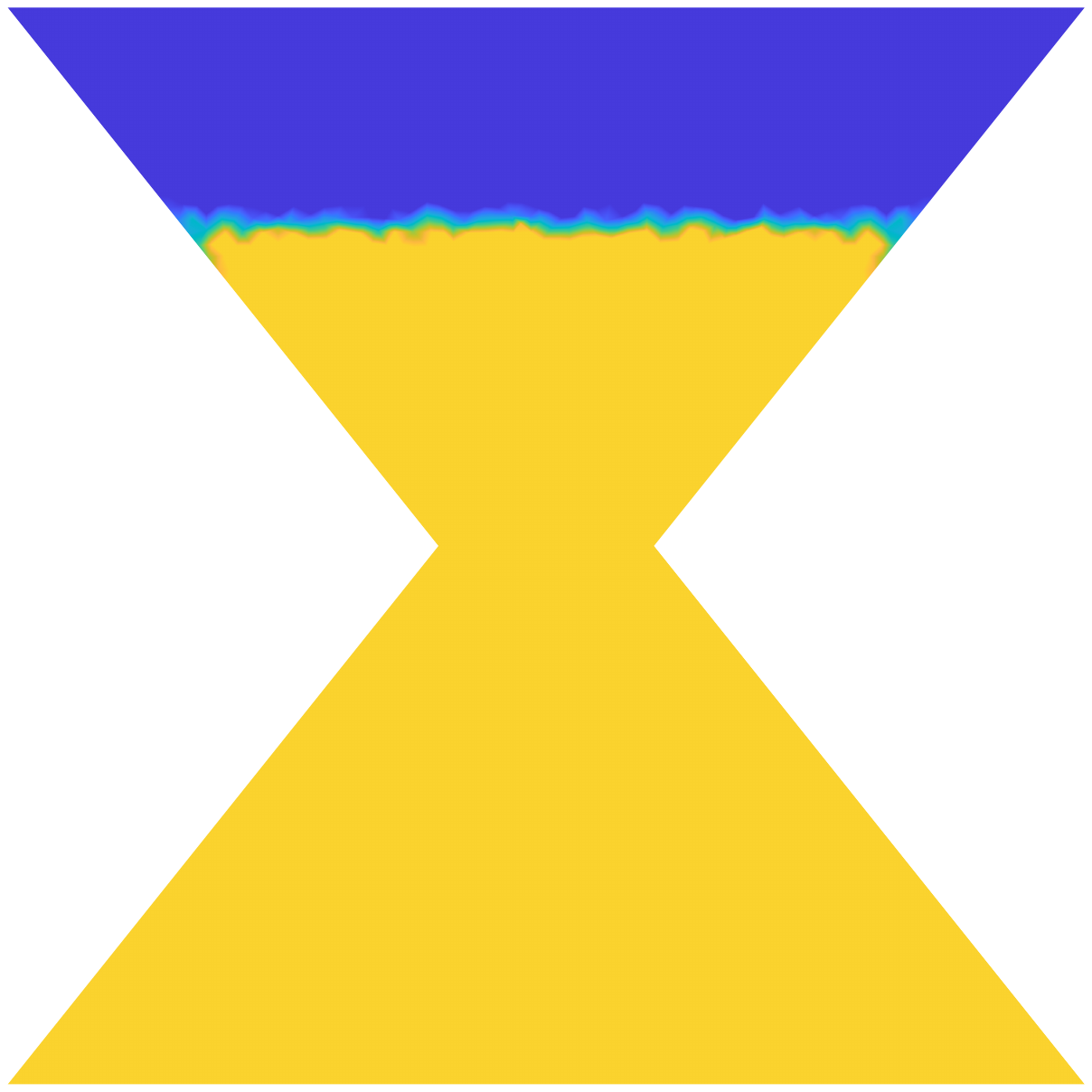}} \,
	\subfloat[][$t=8$]{\includegraphics[trim={4.7cm 7.8cm 4cm 7.25cm},clip,width=0.29\textwidth]{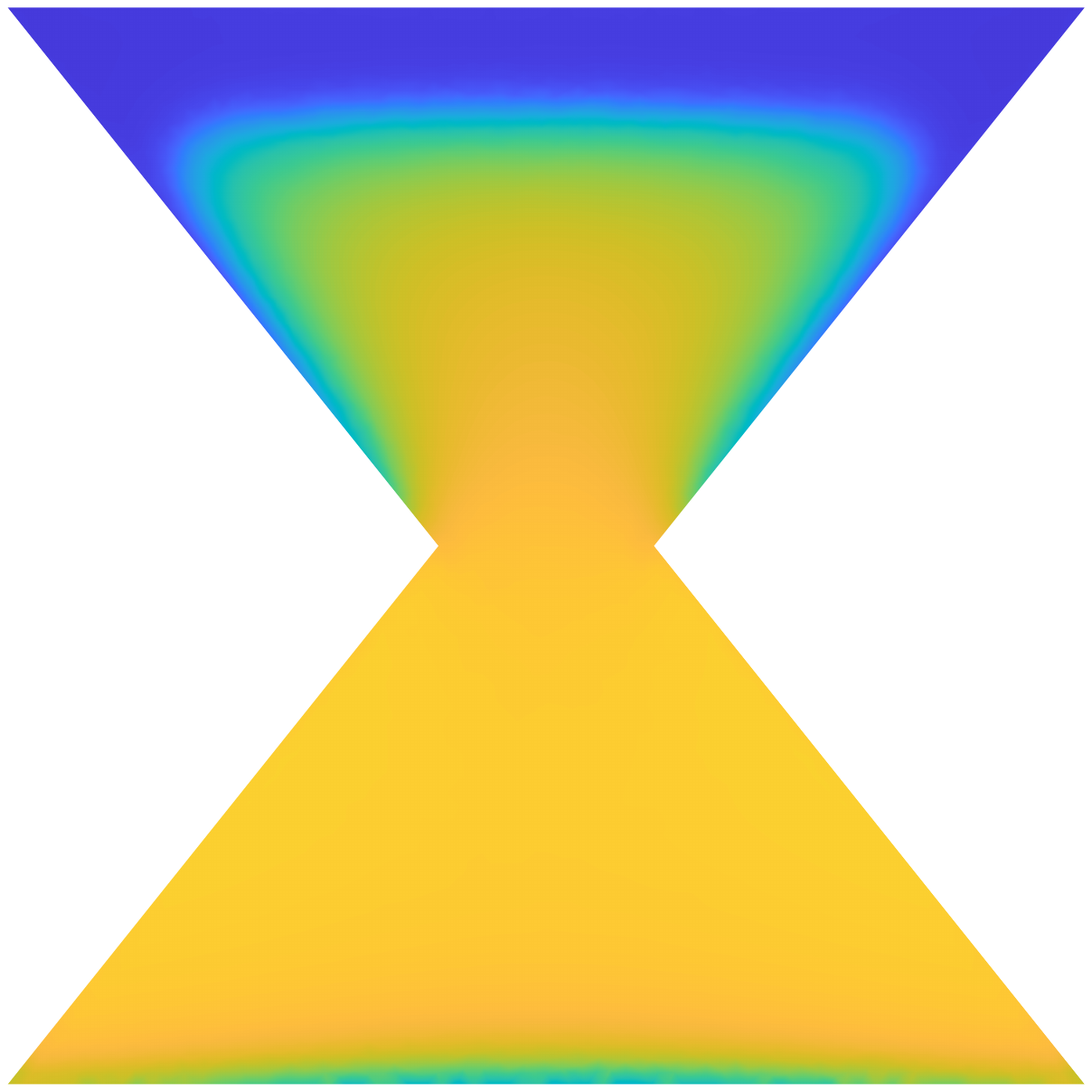}} \,
	\subfloat[][$t=16$]{\includegraphics[trim={4.7cm 7.8cm 4cm 7.25cm},clip,width=0.29\textwidth]{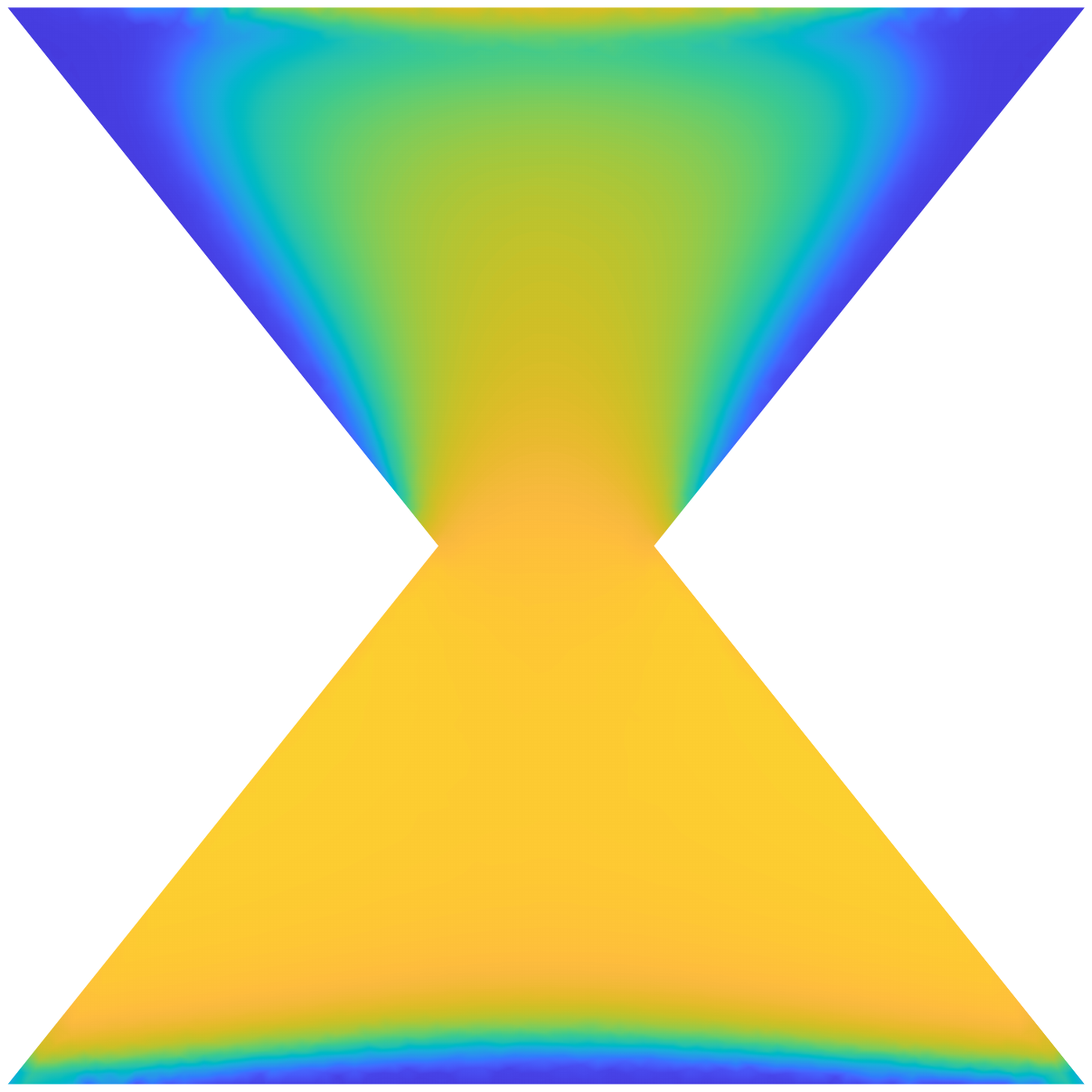}} \\
	\subfloat[][$t=24$]{\includegraphics[trim={4.7cm 7.8cm 4cm 7.25cm},clip,width=0.29\textwidth]{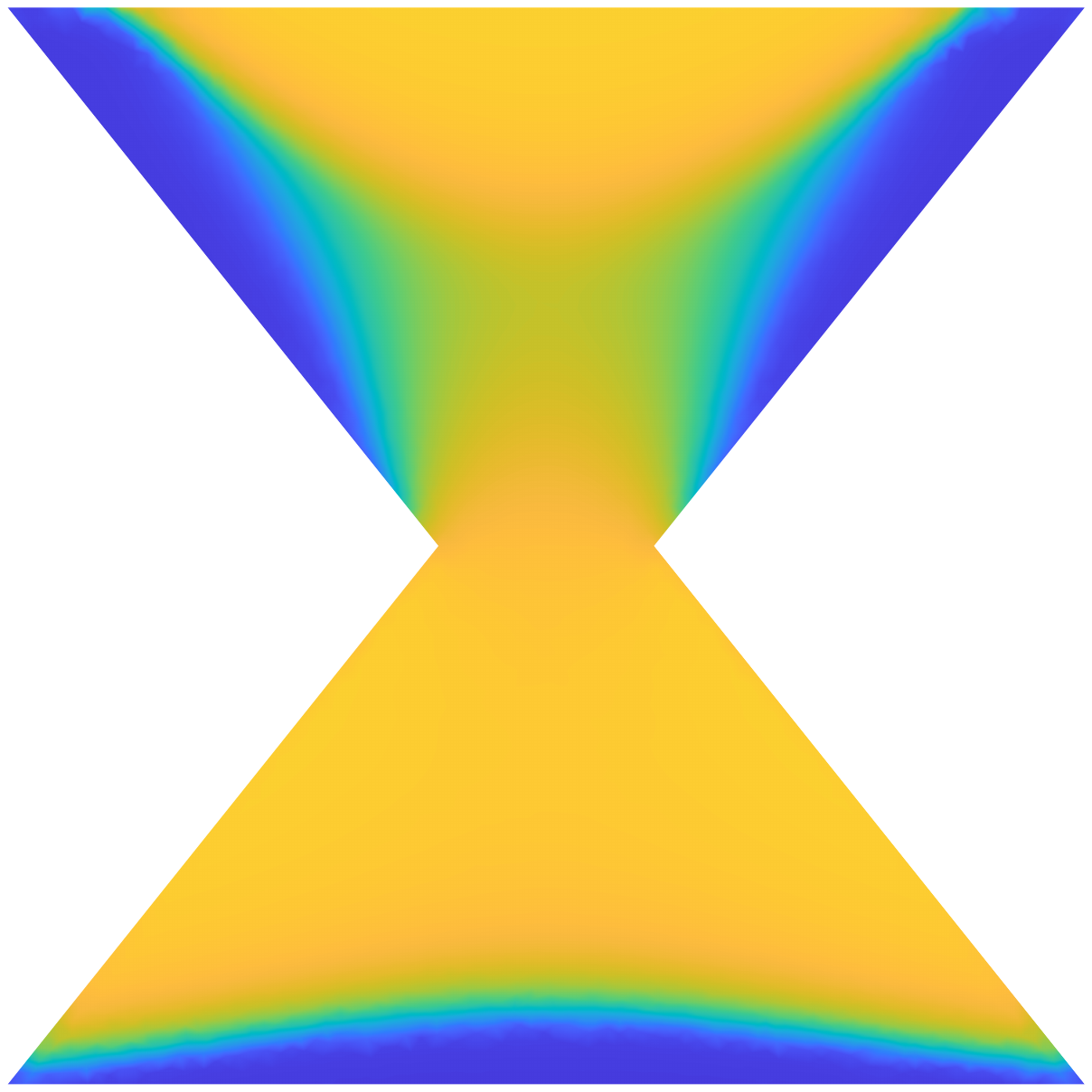}} \,
	\subfloat[][$t=48$]{\includegraphics[trim={4.7cm 7.8cm 4cm 7.25cm},clip,width=0.29\textwidth]{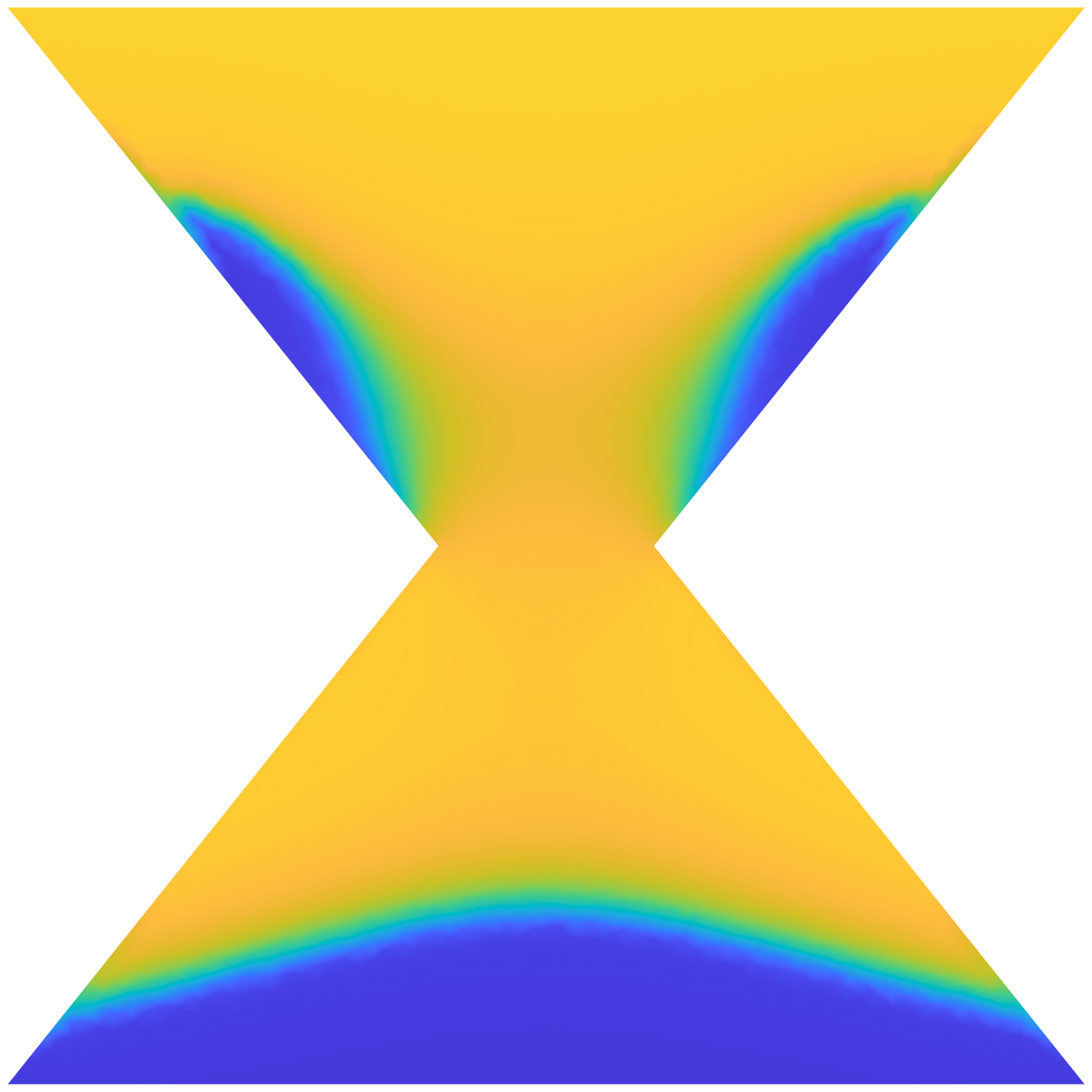}} \,
	\subfloat[][$t=120$]{\includegraphics[trim={4.7cm 7.8cm 4cm 7.25cm},clip,width=0.29\textwidth]{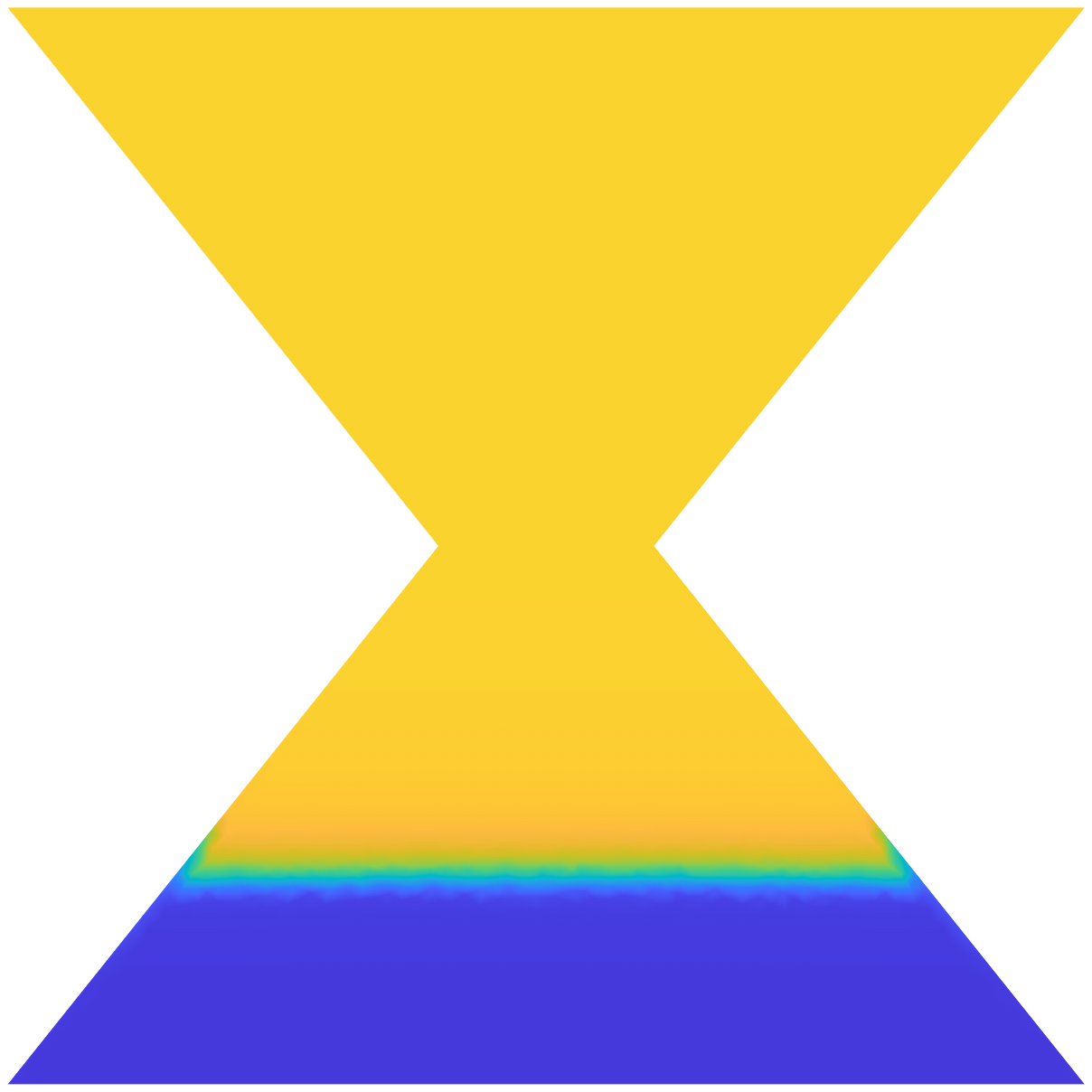}} \\
	\end{minipage}
	\hspace{-2em}
	\begin{minipage}{0.09\textwidth}
	\centering
	\vspace{0.4em}
	\includegraphics[trim={17.2cm 8.6cm 1.7cm 8.1cm},clip,height=0.2\textheight,]{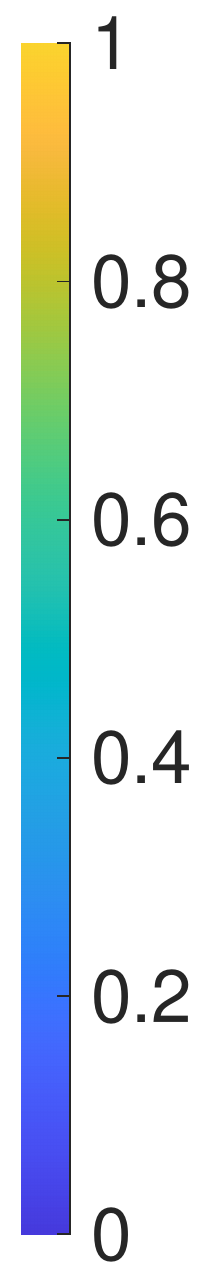}
	\end{minipage}
	\caption{Evolution of the saturation of the oil phase in the hourglass. The evolution of the water is complementary. As expected, the water, the denser phase, flows down the hourglass under the effect of gravity up until reaching the bottom.}
	\label{fig:twophaseflow}
	\end{figure}

	\section{Conclusion}
	
	In this work we proposed and analyzed different notions of extrapolation in the Wasserstein space. We showed how these can be used to construct a second-order time discretization of Wasserstein gradient flows, based on a two-step reformulation of the classical BDF2 scheme. According to the specific notion considered, we could prove different types of convergence guarantees for the scheme. We also proposed a fully-discrete version of the method, and demonstrated numerically its second-order accuracy in space and time. The possibility to provide an implementable scheme is in fact the main advantage of our approach compared to previous works also based on the BDF2 scheme \cite{Matthes2019bdf2}, or on the midpoint rule \cite{Legendre2017VIM}. The different type of extrapolations and their properties are summarized in Table \ref{tab:extra}.
	
	\begin{table}
	\centering
	\caption{Summary of the different types of extrapolation proposed in the present work.}
	\label{tab:extra}
	\renewcommand{\arraystretch}{1.3}
	\begin{tabular}{C{3.5cm} C{3cm} C{3cm} C{3cm}}
	\toprule
	& Free-flow extrapolation \eqref{eq:freeflow} & Viscosity extrapolation \eqref{eq:omegahj} & Metric extrapolation \eqref{eq:metricextraF}  \\
	\toprule
	Fokker-Planck conv. & \cmark & \textbf{?} & \cmark  \\ \hline
	EVI conv.& \textbf{?}  & \textbf{?} & \cmark  \\ \hline
	Implementation & \textbf{?} & \cmark &  \textbf{?}\\ \hline 
	Second order & \textbf{?} & \cmark & \textbf{?}  \\
	\toprule
	\end{tabular}
	\end{table}
	
	In order to provide our fully discrete scheme, we worked in the framework of Eulerian discretizations and considered an extrapolation based on viscosity solutions of the Hamilton-Jacobi equation. The resulting scheme is robust and allows to achieve  second order of accuracy both in space and time, but it does not verify the hypotheses of our convergence results. The free-flow extrapolation could be implemented straightforwardly in the framework of Lagrangian discretizations (see, e.g., \cite{matthes2014convergence,calvez2014particle} for Lagrangian discretizations of Wasserstein gradient flows), although in this setting it would be challenging to achieve second order accuracy in space. 
	The metric extrapolation enjoys the nicest mathematical structure, and in principle one could exploit its dual formulation \eqref{eq:metricextra_dual}, which is a convex optimization problem, to implement it numerically. However, dealing with the strong-convexity constraint on the Brenier potential requires the development of dedicated tools. We will investigate this direction in a future work.

	\section*{Acknowledgements} 
	This work was partly supported by the Labex CEMPI (ANR-11-LABX-0007-01).
	TOG acknowledges the support of the french Agence Nationale de la Recherche through the project MAGA (ANR-16-CE40-0014).
	GT acknowledges that this project has received funding from the European Union’s Horizon 2020 research and innovation programme under the Marie Skłodowska-Curie grant agreement No 754362.
	The authors would like to thank Clément Cancès and Guillaume Carlier for fruitful discussions and suggestions on the topic.
	\begin{center}
	\vspace{0.5em}
	\includegraphics[width=0.15\textwidth]{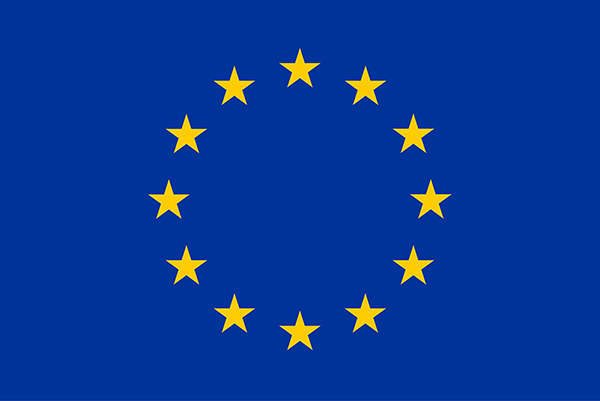}
	\end{center}

	\bibliographystyle{plain}      
	\bibliography{refs}   

\begin{thebibliography}{10}

\bibitem{ambrosio2021lectures}
Luigi Ambrosio, Elia Bru{\'e}, Daniele Semola, et~al.
\newblock {\em Lectures on optimal transport}.
\newblock Springer, 2021.

\bibitem{ambrosio2008gradient}
Luigi Ambrosio, Nicola Gigli, and Giuseppe Savar{\'e}.
\newblock {\em Gradient flows: in metric spaces and in the space of probability
  measures}.
\newblock Springer Science \& Business Media, 2008.

\bibitem{ben2003system}
B~Ben~Moussa and GT~Kossioris.
\newblock On the system of hamilton--jacobi and transport equations arising in
  geometrical optics.
\newblock 2003.

\bibitem{benamou2000computational}
Jean-David Benamou and Yann Brenier.
\newblock A computational fluid mechanics solution to the {M}onge-{K}antorovich
  mass transfer problem.
\newblock {\em Numerische Mathematik}, 84(3):375--393, 2000.

\bibitem{benamou2016augmented}
Jean-David Benamou, Guillaume Carlier, and Maxime Laborde.
\newblock An augmented lagrangian approach to wasserstein gradient flows and
  applications.
\newblock {\em ESAIM: Proceedings and Surveys}, 54:1--17, 2016.

\bibitem{blanchet2013KellerSegel}
Adrien Blanchet.
\newblock A gradient flow approach to the {K}eller-{S}egel systems.
\newblock {RIMS} {K}okyuroku's lecture notes, vol. 1837, pp. 52–73, June
  2013.

\bibitem{bogaevsky2004matter}
Ilya~A Bogaevsky.
\newblock {Matter evolution in Burgulence}.
\newblock {\em arXiv preprint math-ph/0407073}, 2004.

\bibitem{brenier1998one}
Yann Brenier and Emmanuel Grenier.
\newblock Sticky particles and scalar conservation laws.
\newblock {\em SIAM Journal on Numerical Analysis}, 35(6):2317--2328, 1998.

\bibitem{calvez2014particle}
Vincent Calvez and Thomas Gallou{\"e}t.
\newblock Particle approximation of the one dimensional keller-segel equation,
  stability and rigidity of the blow-up.
\newblock {\em arXiv preprint arXiv:1404.0139}, 2014.

\bibitem{cances2017multiphase}
Cl{\'e}ment Canc{\`e}s, Thomas~O. Gallou{\"e}t, and Léonard Monsaingeon.
\newblock Incompressible immiscible multiphase flows in porous media: a
  variational approach.
\newblock {\em Anal. PDE}, 10(8):1845--1876, 2017.

\bibitem{cances2019two}
Cl{\'e}ment Canc{\`e}s, Daniel Matthes, and Flore Nabet.
\newblock A two-phase two-fluxes degenerate cahn--hilliard model as constrained
  wasserstein gradient flow.
\newblock {\em Archive for Rational Mechanics and Analysis}, 233(2):837--866,
  2019.

\bibitem{cances2020LJKO}
Clément Cancès, Thomas Gallouët, and Gabriele Todeschi.
\newblock A variational finite volume scheme for wasserstein gradient flows.
\newblock {\em Numerische Mathematik}, 146:437--480, 10 2020.

\bibitem{carlier2008Toland}
Guillaume Carlier.
\newblock Remarks on toland's duality, convexity constraint and optimal
  transport.
\newblock 2008.

\bibitem{carlier2017convergence}
Guillaume Carlier, Vincent Duval, Gabriel Peyr{\'e}, and Bernhard Schmitzer.
\newblock Convergence of entropic schemes for optimal transport and gradient
  flows.
\newblock {\em SIAM Journal on Mathematical Analysis}, 49(2):1385--1418, 2017.

\bibitem{carrillo2019primal}
Jose~A. Carrillo, Katy Craig, Li~Wang, and Chaozhen Wei.
\newblock Primal dual methods for wasserstein gradient flows, 2019.

\bibitem{deuflhard2002scientific}
Peter Deuflhard and Folkmar Bornemann.
\newblock {\em Scientific computing with ordinary differential equations},
  volume~42.
\newblock Springer Science \& Business Media, 2002.

\bibitem{erbar2020computation}
Matthias Erbar, Martin Rumpf, Bernhard Schmitzer, and Stefan Simon.
\newblock Computation of optimal transport on discrete metric measure spaces.
\newblock {\em Numerische Mathematik}, 144(1):157--200, 2020.

\bibitem{EGH00}
Robert Eymard, Thierry Gallou{\"e}t, and Rapha{\`e}le Herbin.
\newblock Finite volume methods.
\newblock In {\em Handbook of Numerical Analysis}, volume~7, pages 713--1020.

\bibitem{facca2022efficient}
Enrico Facca, Gabriele Todeschi, Andrea Natale, and Michele Benzi.
\newblock Efficient preconditioners for solving dynamical optimal transport via
  interior point methods.
\newblock {\em arXiv preprint arXiv:2209.00315}, 2022.

\bibitem{forkert2020evolutionary}
Dominik Forkert, Jan Maas, and Lorenzo Portinale.
\newblock Evolutionary {$\Gamma$}-convergence of entropic gradient flow
  structures for fokker-planck equations in multiple dimensions.
\newblock {\em arXiv preprint arXiv:2008.10962}, 2020.

\bibitem{gladbach2018scaling}
Peter Gladbach, Eva Kopfer, and Jan Maas.
\newblock Scaling limits of discrete optimal transport.
\newblock {\em arXiv preprint arXiv:1809.01092}, 2018.

\bibitem{jordan1998fokkerplanck}
Richard Jordan, David Kinderlehrer, and Felix Otto.
\newblock {The Variational Formulation of the Fokker--Planck Equation}.
\newblock {\em SIAM Journal on Mathematical Analysis}, 29(1):1--17, 1998.

\bibitem{khanin2010particle}
Konstantin Khanin and Andrei Sobolevski.
\newblock {Particle dynamics inside shocks in Hamilton--Jacobi equations}.
\newblock {\em Philosophical Transactions of the Royal Society A: Mathematical,
  Physical and Engineering Sciences}, 368(1916):1579--1593, 2010.

\bibitem{laurenccot2013gradient}
Philippe Lauren{\c{c}}ot and Bogdan-Vasile Matioc.
\newblock A gradient flow approach to a thin film approximation of the muskat
  problem.
\newblock {\em Calculus of Variations and Partial Differential Equations},
  47(1):319--341, 2013.

\bibitem{lavenant2018dynamical}
Hugo Lavenant, Sebastian Claici, Edward Chien, and Justin Solomon.
\newblock Dynamical optimal transport on discrete surfaces.
\newblock {\em ACM Transactions on Graphics (TOG)}, 37(6):1--16, 2018.

\bibitem{leclerc2020lagrangian}
Hugo Leclerc, Quentin M{\'e}rigot, Filippo Santambrogio, and Federico Stra.
\newblock Lagrangian discretization of crowd motion and linear diffusion.
\newblock {\em SIAM Journal on Numerical Analysis}, 58(4):2093--2118, 2020.

\bibitem{Legendre2017VIM}
Guillaume Legendre and Gabriel Turinici.
\newblock Second-order in time schemes for gradient flows in wasserstein and
  geodesic metric spaces.
\newblock {\em Comptes Rendus Mathematique}, 355:345--353, 03 2017.

\bibitem{matthes2014convergence}
Daniel Matthes and Horst Osberger.
\newblock Convergence of a variational lagrangian scheme for a nonlinear drift
  diffusion equation.
\newblock {\em ESAIM: Mathematical Modelling and Numerical Analysis},
  48(3):697--726, 2014.

\bibitem{Matthes2019bdf2}
Daniel Matthes and Simon Plazotta.
\newblock A variational formulation of the bdf2 method for metric gradient
  flows.
\newblock {\em ESAIM: Mathematical Modelling and Numerical Analysis},
  53(1):145--172, 2019.

\bibitem{natale2020FVCA}
Andrea Natale and Gabriele Todeschi.
\newblock {TPFA Finite Volume Approximation of Wasserstein Gradient Flows}.
\newblock In {\em Finite Volumes for Complex Applications IX - Methods,
  Theoretical Aspects, Examples}, pages 193--201. Springer International
  Publishing, 2020.

\bibitem{natale2021computation}
Andrea Natale and Gabriele Todeschi.
\newblock Computation of optimal transport with finite volumes.
\newblock {\em ESAIM: Mathematical Modelling and Numerical Analysis},
  55(5):1847--1871, 2021.

\bibitem{otto2001geometry}
Felix Otto.
\newblock The geometry of dissipative evolution equations: the porous medium
  equation.
\newblock {\em Communications in Partial Differential Equations},
  26(1-2):101--174, 2001.

\bibitem{plazotta2018fokkerplanck}
Simon Plazotta.
\newblock A bdf2-approach for the non-linear fokker-planck equation, 2018.

\bibitem{santambrogio2015optimal}
Filippo Santambrogio.
\newblock Optimal transport for applied mathematicians.
\newblock {\em Birk{\"a}user, NY}, pages 99--102, 2015.

\bibitem{santambrogio2017euclidean}
Filippo Santambrogio.
\newblock $\{$Euclidean, metric, and Wasserstein$\}$ gradient flows: an
  overview.
\newblock {\em Bulletin of Mathematical Sciences}, 7(1):87--154, 2017.

\bibitem{santambrogio2018crowd}
Filippo Santambrogio.
\newblock {Crowd motion and evolution PDEs under density constraints}.
\newblock {\em ESAIM: Proceedings and Surveys}, 64:137--157, 2018.

\bibitem{todeschi2021finite}
Gabriele Todeschi.
\newblock {\em Finite volume approximation of optimal transport and Wasserstein
  gradient flows}.
\newblock PhD thesis, PSL Universit{\'e} Paris Dauphine, 2021.

\bibitem{villani2003topics}
C.~Villani.
\newblock {\em Topics in Optimal Transportation}.
\newblock Graduate studies in mathematics. American Mathematical Society, 2003.

\bibitem{villani2009optimal}
C{\'e}dric Villani.
\newblock {\em Optimal transport: old and new}, volume 338.
\newblock Springer, 2009.

\end{thebibliography}

\end{document}